\documentclass[11pt,a4paper]{article}

\usepackage[top=2.5cm, bottom = 2.5cm, left = 2.5cm, right = 2.5cm]{geometry}

\usepackage{package_RF}
\bibliographystyle{alpha}

\usepackage{graphbox}

\newcommand{\prBm}{partially reflected Brownian motion\xspace}

\title{Gene flow across geographical barriers - scaling limits of random walks with obstacles}
\author{Rapha\"el Forien\footnote{INRA - Unité de Biostatistiques et Processus Spatiaux (UR546), Domaine St-Paul - Site Agroparc, 84914 Avignon Cedex, FRANCE\newline raphael.forien@inra.fr}}

\begin{document}
	
	\maketitle

	\begin{abstract}
		We study a class of random walks which behave like simple random walks outside of a bounded region around the origin and which are subject to a partial reflection near the origin. We obtain a non trivial scaling limit which behaves like reflected Brownian motion until its local time at zero reaches an exponential variable. It then follows reflected Brownian motion on the other side of the origin until its local time at zero reaches another exponential level, etc. These random walks are used in population genetics to trace the position of ancestors in the past near geographical barriers.
			
		\textbf{Keywords: }
		\newcommand{\sep}{;}
		generalised Brownian motions\sep random walks with obstacles\sep partial reflection\sep barriers to gene flow. \\
		\textbf{2010~MSC: } 60G99, 60F99, 60J65, 60J70
	\end{abstract}

	\section*{Introduction}
	
	Barriers to gene flow are physical obstacles to migration.
	Examples include mountain ranges, highways, political borders and the Great Wall of China \cite{su_great_2003}.
	All these geographical features leave traces in the genetic composition of populations living on both sides of the barrier.
	Geneticists try to use these traces to detect barriers to gene flow and to quantify their effect on migration.
	
	A naive approach to this problem would be to compute a measure of genetic differentiation (\textit{e.g.} Wright's F\textsubscript{st} \cite{wright_isolation_1943} which measures the level of inbreeding in the population resulting from its structure \cite{slatkin_comparison_1989}) between the two populations on each side of the candidate barrier, and to say that the latter acts as a barrier to gene flow if two individuals living on the same side are more related to each other on average than two indivudals living on different sides of the barrier.
	
	This method assumes that the two subpopulations on each side of the obstacle are well mixed.
	This may not always be a reasonable assumption and in some cases it is preferable to take into account the finer scale geographic structure of the sampled population.
	
	Mathematical models for spatially extended populations with barriers to gene flow already exist in the literature \cite{nagylaki_clines_1976,slatkin_gene_1973}, but most assume a discrete space and finding analytical formulae in this framework is challenging at best.
	Such formulae are particularly useful for inference purposes, where computational power is limiting.
	This paper is a step towards a rigorous mathematical framework to model genetic isolation by distance with barriers to gene flow in a continuous space.
	
	\paragraph*{Stepping stone model with a barrier at the origin}
	
	Nagylaki and his co-authors proposed the following model for the evolution of a spatially structured population with a barrier to gene flow \cite{nagylaki_clines_1976,nagylaki_clines_2012,nagylaki_clines_2016}.
	Consider a population living in a discrete linear space, with colonies (or demes) at locations $ \lbrace \ldots, -2, -1, 1, 2, \ldots \rbrace $.
	Each deme contains $ N $ individuals, and at each generation, those individuals are replaced by the offspring of the previous generation.
	An individual in deme $ i \notin \{-1, 1 \} $ has its parent in the previous generation in deme $ i-1 $ or $ i+1 $ with probability $ m/2 $ for some $ m \in (0,1) $, otherwise its parent is drawn from deme $ i $.
	Individuals in deme $ 1 $ have their parent in deme $ 2 $ with probability $ m/2 $ and in deme $ -1 $ with probability $ cm/2 $, with $ c \in (0,1) $, and likewise individuals in deme $ -1 $ have their parent in deme $ 1 $ with probability $ cm /2 $.
	Migration probabilities are depicted in Figure~\ref{fig:nagylaki}.
	Properties of this model and applications to various settings were studied in a sequence of papers \cite{nagylaki_clines_1976,nagylaki_influence_1988,nagylaki_influence_1993,nagylaki_clines_2012-1,nagylaki_clines_2016-1}.
	
	\begin{figure}[h]
		\centering
		\includegraphics[width=.9\textwidth]{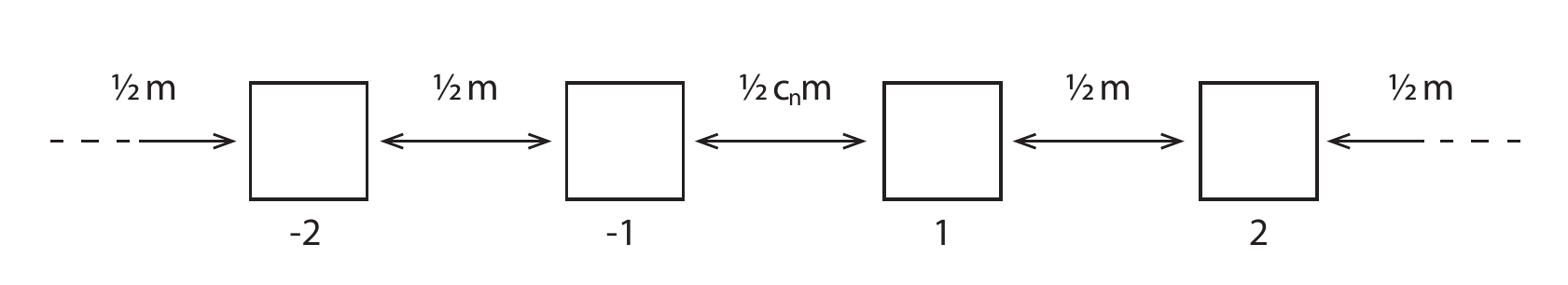}
		\caption{Stepping stone model with a barrier to gene flow} \label{fig:nagylaki}
	\end{figure}
	
	In this model, two individuals sampled at a given distance from each other will be more related if they are sampled on the same side of the origin than if they are not.
	This can be seen by assuming that each new individual mutates to a type never seen before with some probability $ \mu \in (0,1) $.
	Relatedness between individuals can then be measured by the probability that two sampled individuals are of the same type.
	This probability is called the probability of identity by descent, and it is given by the probability generating function of the age of the most recent common ancestor of these individuals.
	Properties of this function were studied in this setting in \cite{barton_effect_2008}.
	
	Also of interest is the evolution of the frequency of a given type (or allele) in the population, denoted by $ ( p^N(t,x), x \in \Z \setminus \{0\}, t \geq 0 ) $.
	Ignoring mutations and setting
	\begin{equation*}
	 \bar{p}(t,x) = \E{p^N(t,x)},
	\end{equation*}
	$\bar{p}$ solves a simple difference equation.
	Setting
	\begin{equation*}
		\bar{p}_n(t,x) = \bar{p}(nt, \sqrt{n}x)
	\end{equation*}
	for $ n \geq 1 $ and assuming that $ \sqrt{n} c \to \gamma \in (0,\infty) $, Nagylaki \cite{nagylaki_clines_1976} showed that $ \bar{p}_n $ converges to the solution of the following equation
	\begin{equation} \label{equation_allele_frequencies}
	\begin{cases}
	\partial_t p(t,x) = \frac{\sigma^2}{2} \partial_{xx} p(t,x) \qquad \text{ for } x \in \R \setminus \lbrace 0 \rbrace,\\
	\partial_x p(t,0^-) = \partial_x p(t,0^+) = \gamma ( p(t,0^+) - p(t,0^-) )
	\end{cases}
	\end{equation}
	where $ \sigma^2 = m $, see Figure~\ref{fig:allele_frequency}.
	In \cite{nagylaki_influence_1988} (see also \cite{barton_effect_2008}), Nagylaki showed a similar approximation for the probability of identity by descent.
	
	\begin{figure}[h]
		\centering
		\includegraphics[width=\textwidth]{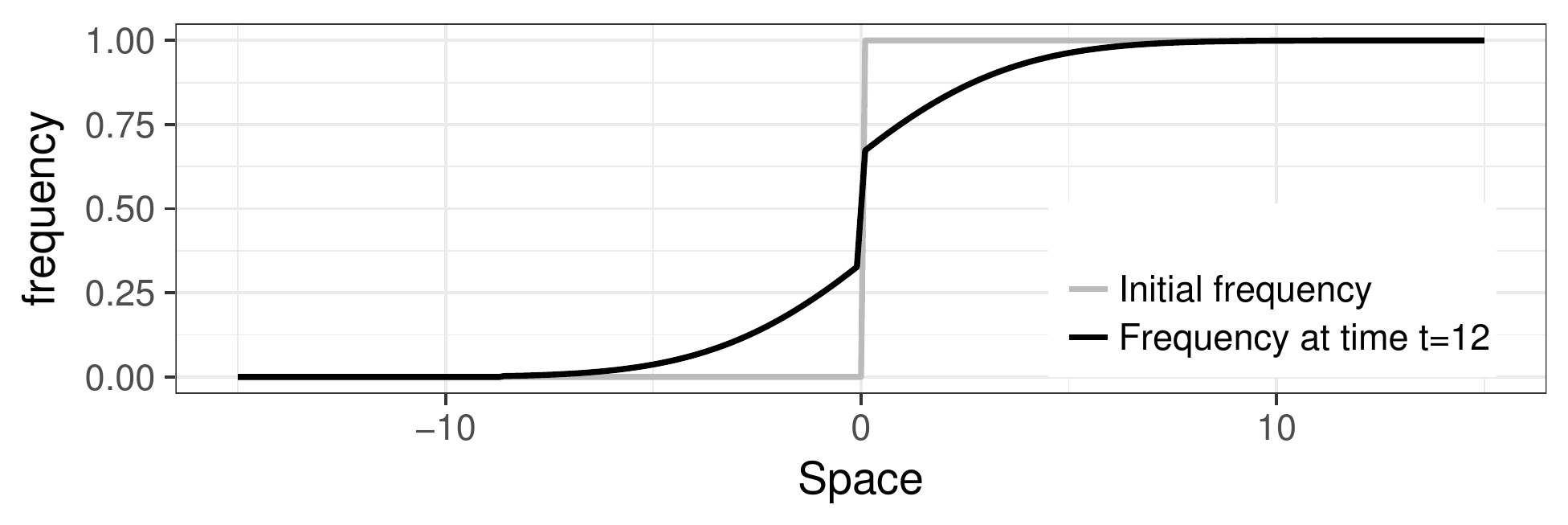}
		\caption{Evolution of allele frequency}
		\label{fig:allele_frequency}
		{\small Frequency of a type initially present at the right of the origin after a few generations.}
	\end{figure}
	
	\paragraph*{Duality}
	
	An alternative way to study this model from the forwards in time evolution of types is to look back in time for the position of one's ancestor some number of generations in the past.
	If $ \xi^x_t $ denotes the position of the ancestor $ t $ generations ago of an individual sampled at $ x \in \Z \setminus \lbrace 0 \rbrace $, then $ ( \xi^x_t, t \geq 0) $ is a random walk with transition probabilities given by the migration rates in Figure~\ref{fig:nagylaki}.
	In the absence of mutations, the proportion of individuals carrying a given allele at location $ x $ is the proportion of those individuals whose ancestor $ t $ generations ago carried the same allele.
	As a result, for $ x \in \Z \setminus \{0\} $ and $ t \geq 0 $,
	\begin{equation*}
	\bar{p}(t,x) = \E{p^N(0,\xi^x_t)}.
	\end{equation*}
	
	Likewise, the probability of identity by descent can be expressed with the help of the coalescence time of two random walks $ \xi^x, \xi^y $, \textit{i.e.} the first time that the two ancestors have the same parent.
	
	\paragraph*{Scaling limits of random walks with obstacles}
	
	In this paper, we present a result on the scaling limits of a class of \rw* with obstacles which includes $ (\xi^x_t, t\geq 0) $.
	For $ n \geq 1 $, if we set
	\begin{equation*}
	X_n(t) = \frac{1}{\sqrt{n}} \xi_{nt}^{\lfloor \sqrt{n} x \rfloor},
	\end{equation*}
	and if $ c $ is of order $ n^{-1/2} $, we show that $ X_n $ converges in distribution to a continuous stochastic process.
	This process resembles \Bm everywhere except near the origin where it has a singular behaviour.
	More precisely, this process behaves like reflected \Bm until its local time at the origin reaches an exponential \rv, after which it becomes reflected \Bm on the other side of the origin, until its local time reaches another exponential variable, and so on.
	We call this process \prBm.
	It generalises elastic \Bm considered for example in \cite{grebenkov_partially_2006}.
	
	The same process was obtained as a limit of one dimensional diffusions in \cite{mandrekar_brownian_2016}. 
	For $ \varepsilon > 0 $, they consider $ (X_\varepsilon(t), t\geq 0) $, solution to
	\begin{equation*}
	d X_\varepsilon(t) = \frac{L_\varepsilon}{\varepsilon} a \left( \frac{1}{\varepsilon} X_\varepsilon(t) \right) dt + d B_t,
	\end{equation*}
	where $ B $ is standard \Bm, $ L_\varepsilon \to \infty $ as $ \varepsilon \downarrow 0 $ and $ \sign(x) a(x) \geq 0 $, $ supp(a) \subset [-1, 1] $, and they give conditions on $ L_\varepsilon $ under which $ X_\varepsilon $ converges to \prBm (which they call \Bm with a hard membrane).
	
	This process also appears in \cite{lejay_snapping_2016} under the name \textit{snapping out \Bm} and is obtained as a limit of one dimensional diffusions with a small diffusivity in a thin layer around the origin.
	Lejay also gives another construction by piecing together a sequence of elastic \Bm*, choosing their sign at random each time the process is killed and reborn.
	
	In addition, we give a different construction of \prBm inspired by the speed and scale construction of one dimensional diffusions.
	Starting with standard \Bm, we glue together its excursions above level $ x > 0 $ and below level $ -x $ and we show that the result is the same process as the one described above.
	
	Moreover, we provide a martingale problem characterisation of \prBm, where equation \eqref{equation_allele_frequencies} can be seen as the action of the semigroup of \prBm on the initial allele frequency.
	In particular, the domain of the infinitesimal generator associated to \prBm is precisely the space of twice continuously differentiable functions on $ \R \setminus \lbrace 0 \rbrace $ satisfying
	\begin{equation*}
	\deriv{p}{x}(0^+) = \deriv{p}{x}(0^-) = \gamma (p(0^+) - p(0^-))
	\end{equation*}
	for some $ \gamma > 0 $.
	
	We also provide an explicit formula for the transition density of \prBm in Corollary~\ref{cor:transition_density} below.
	It turns out that this transition density has already been in use in the field of diffusion in porous media \cite{novikov_random_2011,grebenkov_exploring_2014}, but without mention of the underlying stochastic process.
	
	More recently, this process has been used in \cite{ringbauer_estimating_2018} to detect barriers to gene flow in genetic samples and to measure their strength from the resulting distortion in isolation by distance patterns.
	
	This paper is laid out as  follows.
	In Section~\ref{sec:results}, we present our main results: \prBm is defined as the solution to a martingale problem and two constructions of this process are given, we also state the convergence of a class of \rw* to \prBm.
	In Section~\ref{sec:construction}, we prove that the martingale problem which charaterizes \prBm is well posed and we show that the two constructions in Section~\ref{sec:results} provide solutions to this martingale problem.
	Finally in Section~\ref{sec:convergence}, we prove the convergence in distribution of a sequence of \rw* to \prBm.
	
	\section*{Acknowledgements}
	
	The author would like to thank Amandine Véber for suggesting this interesting problem and for helpfull discussions and comments all along this project.
	The author is also indebted to Alison Etheridge, \'Etienne Pardoux and Steven N. Evans whose comments and suggestions greatly improved the presentation of the results.
	Finally, the author would like to thank two anonymous referees who made a number of constructive remarks and helped improve this paper.
	
	The author was supported in part by the chaire Modélisation Mathématique et Biodiversité of Veolia Environnement-École Polytechnique-Museum National d’Histoire Naturelle-Fondation X.
	The author declares no competing interest.
	
	\section{Main results} \label{sec:results}
	
	\subsection{Definition and constructions of \prBm} \label{subsec:definition}
	
	We first give a definition of \prBm as a solution to a martingale problem on a space consisting of the disjoint union of the positive and negative half lines.
	We will show that this martingale problem is well posed.
	We then give two constructions of this process.
	
	\paragraph*{Definition}
	
	Let $ \ddot{\R} $ be the disjoint union of the positive and negative half lines,
	\begin{equation*}
	\ddot{\R} = (-\infty,0^-] \cup [0^+, +\infty).
	\end{equation*}
	It is endowed with the metric $ d $ defined by
	\begin{equation*}
	\forall x, y \in \ddot{\R}, \quad d(x,y) = \abs{x-y} + \1{xy \leq 0}.
	\end{equation*}
	Let $ \hat{C}(\ddot{\R}) $ be the set of continuous real-valued functions on $ \ddot{\R} $ which vanish at infinity.
	For $ \gamma \in [0,+\infty] $, let $ \mathcal{D}^\gamma $ be the subspace of functions $ f \in \hat{C}(\ddot{\R}) $ which are twice continuously differentiable on each half line and satisfy
	\begin{equation} \label{transmission_condition}
	\deriv{f}{x}(0^-) = \deriv{f}{x}(0^+) = \gamma ( f(0^+) - f(0^-) ).
	\end{equation}
	(For $ \gamma = +\infty $, \eqref{transmission_condition} becomes $ f'(0^-) = f'(0^+) $ and $ f(0^-) = f(0^+) $.)
	Fix $ \sigma > 0 $ and let us define a linear operator $ L^\gamma $ on $ \mathcal{D}^\gamma $ by
	\begin{equation} \label{definition_Lgamma}
	L^\gamma f = \frac{\sigma^2}{2} \deriv[2]{f}{x}, \quad \forall f \in \mathcal{D}^\gamma.
	\end{equation}
	Let $ D ( \R_+, \ddot{\R} ) $ denote the space of \cadlag functions from $ \R_+ $ to $ \ddot{\R} $.
	
	\begin{definition}[\prBm] \label{definition_prBm}
		Let $ \proc{X} $ be a \cadlag, $ \ddot{\R} $-valued Markov process on a probability space $ \left( \Omega, \A, \p \right) $, and call $ \p^X $ its law on $ D(\R_+, \ddot{\R}) $.
		The process $ \proc{X} $ (resp. its law $ \p^X $) is said to be (the law of) \prBm if it is a solution to the martingale problem associated with $ L^\gamma $ for some $ \gamma \in [0,+\infty] $, \textit{i.e.} if for any $ f \in \mathcal{D}^\gamma $, the process
		\begin{equation} \label{martingale_pb}
		f(X_t) - \int_{0}^{t} L^\gamma f(X_s) ds
		\end{equation}
		is a martingale with respect to the filtration generated by $ \proc{X} $.
		We call $ \gamma $ the \textit{permeability} of the barrier. 
	\end{definition}
	
	Naturally, we say that $ \proc{X} $ is \prBm with initial distribution $ \mu $ if it is a solution to the martingale problem associated with $ ( L^\gamma, \mu) $, \textit{i.e.} if \eqref{martingale_pb} is a martingale for all $ f \in \mathcal{D}^\gamma $ and if $ \p^X \left( X_0 \in \cdot \right) = \mu(\cdot) $.
	
	The operator $ L^\gamma $ is thus the generator of \prBm.
	This definition does not seem to provide much information about possible solutions to this martingale problem.
	It does not even tell us if such solutions exist or if they are unique (in distribution).
	This is the subject of the next proposition.
	Below, we also give two ways to construct solutions to this martingale problem.
	
	It should be noted that for $ \gamma = 0 $ (impermeable barrier), the operator $ L^\gamma $ is the generator of reflected Brownian motion (see for example Exercice VII.1.23 in \cite{revuz_continuous_2013} in the case $ \alpha=1 $), while for $ \gamma = +\infty $ (completely permeable barrier), $ L^\gamma $ is the generator of \Bm.
	
	\begin{proposition} \label{propo:well_posedness}
		For any $ \gamma \in [0,+\infty] $, the martingale problem associated with $ L^\gamma $ has at most one $ D(\R_+, \ddot{\R}) $ valued solution.
	\end{proposition}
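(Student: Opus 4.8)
The plan is to reduce uniqueness of the martingale problem to the solvability of the resolvent equation $ \lambda f - L^\gamma f = g $, which on $ \ddot{\R} $ is nothing but an explicit second-order linear ODE on each half line. Recall the standard fact (see e.g.\ Ethier and Kurtz, \emph{Markov Processes: Characterization and Convergence}, Chapter 4): if, for some $ \lambda > 0 $, the range $ \{ \lambda f - L^\gamma f : f \in \mathcal{D}^\gamma \} $ is dense in $ \hat{C}(\ddot{\R}) $, then any solution of the martingale problem associated with $ ( L^\gamma, \mu ) $ has one-dimensional marginals uniquely determined by $ \mu $. The mechanism is the usual resolvent computation: by \eqref{martingale_pb}, $ \E{f(X_t)} = \E{f(X_0)} + \int_0^t \E{L^\gamma f(X_s)}\, ds $ for every $ f \in \mathcal{D}^\gamma $, and taking Laplace transforms in $ t $ gives $ \int_0^\infty e^{-\lambda t}\, \E{\left((\lambda - L^\gamma) f\right)(X_t)}\, dt = \int f\, d\mu $; writing $ f = (\lambda - L^\gamma)^{-1} h $ (legitimate once $ \lambda - L^\gamma $ is known to be one-to-one, which the next paragraph confirms), density of the range pins down $ \lambda \mapsto \int_0^\infty e^{-\lambda t}\, \E{h(X_t)}\, dt $, hence $ t \mapsto \E{h(X_t)} $ for every $ h \in \hat{C}(\ddot{\R}) $, hence the law of $ X_t $ for every $ t $ by right continuity of $ \proc{X} $. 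Since every solution is assumed Markov in Definition~\ref{definition_prBm}, applying this to the conditional law restarted at each time $ s $ propagates uniqueness to all finite-dimensional marginals and therefore to the law on $ D(\R_+, \ddot{\R}) $. So it suffices to show that $ \lambda f - \frac{\sigma^2}{2} f'' = g $ admits a solution $ f \in \mathcal{D}^\gamma $ for every $ g \in \hat{C}(\ddot{\R}) $ and some (indeed every) $ \lambda > 0 $.

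Next I would solve this ODE half line by half line. Set $ \kappa = \sqrt{2\lambda}/\sigma $. On $ [0^+, +\infty) $ the bounded solutions of $ \lambda f - \frac{\sigma^2}{2} f'' = g $ are $ f(x) = \frac{1}{\sigma^2 \kappa} \int_0^\infty e^{-\kappa|x-y|} g(y)\, dy + A_+ e^{-\kappa x} $ with $ A_+ \in \R $ free, and symmetrically a single free constant $ A_- $ remains on $ (-\infty, 0^-] $; both particular integrals lie in $ \hat{C}(\ddot{\R}) $ because $ g $ does and the exponential kernel decays, and each such $ f $ is automatically $ C^2 $ on its half line since $ g $ is continuous. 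The two transmission conditions in \eqref{transmission_condition}, namely $ f'(0^-) = f'(0^+) $ and $ f'(0^+) = \gamma\left( f(0^+) - f(0^-) \right) $, then form a $ 2 \times 2 $ linear system for $ (A_+, A_-) $ whose determinant one computes to equal $ \kappa(\kappa + 2\gamma) $, which is strictly positive for every $ \lambda > 0 $ and $ \gamma \in [0, +\infty) $; hence $ (A_+, A_-) $ is uniquely determined and the resulting $ f $ belongs to $ \mathcal{D}^\gamma $ (for $ \gamma = 0 $ this returns the two decoupled Neumann problems that govern reflected Brownian motion on each half line). The value $ \gamma = +\infty $ is treated separately: there $ \mathcal{D}^\infty $ imposes $ f(0^-) = f(0^+) $ together with $ f'(0^-) = f'(0^+) $, so the two half lines are glued into $ \R $ and the resolvent equation reduces to the classical one for Brownian motion, which is solvable.

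Putting the pieces together, we obtain $ f \in \mathcal{D}^\gamma $ with $ (\lambda - L^\gamma) f = g $ for every $ g \in \hat{C}(\ddot{\R}) $, so $ \lambda - L^\gamma $ is onto and a fortiori has dense range; with the reduction above this proves the proposition. The ODE and the linear algebra are not where the difficulty lies. The delicate point is, first, making the reduction of the first paragraph fully rigorous on the non-standard state space $ ( \ddot{\R}, d ) $ --- in particular the measure-theoretic passage from one-dimensional marginals plus the Markov property to the law on path space, or, alternatively, checking that $ L^\gamma $ satisfies the positive maximum principle so that Hille--Yosida and the well-posedness theorems of Ethier and Kurtz apply to $ \overline{L^\gamma} $ directly --- and, second, keeping careful track of the junction $ \{ 0^-, 0^+ \} $ and of the limiting permeability $ \gamma = +\infty $, for which the domain $ \mathcal{D}^\gamma $ and hence the boundary conditions of the ODE take a different form.
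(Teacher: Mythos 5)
Your proposal is correct and follows essentially the same route as the paper: reduce uniqueness to solvability of the resolvent equation $\lambda f - L^\gamma f = g$, solve the ODE explicitly on each half line with one free constant per side, and fix the constants via the transmission condition \eqref{transmission_condition} (the paper simply asserts that the constants can be so chosen, whereas you exhibit the $2\times 2$ system and its determinant $\kappa(\kappa+2\gamma)>0$). The only difference is packaging: the paper verifies the positive maximum principle to get dissipativity and then cites Corollary~4.4.4 of Ethier and Kurtz, while you unpack the Laplace-transform mechanism behind that corollary by hand; both are valid.
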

	
	\begin{proof}
		The operator $ L^\gamma $ satisfies the positive maximum principle on $ \mathcal{D}^\gamma $, \textit{i.e.} whenever $ f \in \mathcal{D}^\gamma $ and $ \sup_{x \in \ddot{\R}} f(x) = f(x_0) \geq 0 $, we have $ L^\gamma f(x_0) \leq 0 $.
		By Lemma~4.2.1 of \cite{ethier_markov_1986}, $ L^\gamma $ is thus dissipative on $ \mathcal{D}^\gamma $ (recall that we require the functions in $ \mathcal{D}^\gamma $ to vanish at infinity).
		
		Let us now show that for any positive $ \lambda $, the range of $ \lambda - L^\gamma $ contains the space $ \hat{C}(\ddot{\R}) $ of continuous functions vanishing at infinity.
		We do it in the case $ \sigma^2 = 2 $, but the general case is similar.
		Let $ f \in \hat{C}(\ddot{\R}) $ be such a function and define
		\begin{equation*}
		g(x) = \left\lbrace
		\begin{aligned}
		& e^{-\sqrt{\lambda} x} \int_{0}^{x} \dfrac{e^{\sqrt{\lambda} y}}{2 \sqrt{\lambda}} f(y) dy + e^{\sqrt{\lambda} x} \int_{x}^{+\infty} \dfrac{e^{-\sqrt{\lambda} y}}{2 \sqrt{\lambda}} f(y) dy + A e^{-\sqrt{\lambda} x} & \text{ if } x \geq 0^+, \\
		& e^{- \sqrt{\lambda} x} \int_{-\infty}^{x} \dfrac{e^{\sqrt{\lambda} y}}{2\sqrt{\lambda}} f(y) dy + e^{\sqrt{\lambda} x} \int_{x}^{0} \dfrac{e^{-\sqrt{\lambda} y}}{2\sqrt{\lambda}} f(y) dy + B e^{\sqrt{\lambda} x} & \text{ if } x \leq 0^-,
		\end{aligned}
		\right.
		\end{equation*}
		for some $ A, B \in \R $.
		Then $ g $ is twice continuously differentiable on $ \ddot{\R} $, vanishes at infinity and satisfies
		\begin{equation*}
		\deriv[2]{g}{x}(x) = \lambda g(x) - f(x)
		\end{equation*}
		for all $ x \in \ddot{\R} $.
		The constants $ A $ and $ B $ can then be chosen so that $ g $ also satisfies \eqref{transmission_condition}.
		As a result we have found a function $ g $ in $ \mathcal{D}^\gamma $ such that $ \lambda g - L^\gamma g = f $ for any $ f \in \hat{C}(\ddot{\R}) $.
		
		In particular, since $ \mathcal{D}^\gamma $ is a subset of $ \hat{C}(\ddot{\R}) $, it is in the range of $ \lambda - L^\gamma $ for any $ \lambda > 0 $.
		Furthermore $ \hat{C}(\ddot{\R}) $ is separating in the sense of Section~3.4 in \cite{ethier_markov_1986}.
		Proposition~\ref{propo:well_posedness} then follows from Corollary~4.4.4 in \cite{ethier_markov_1986}. 
	\end{proof}
	
	\paragraph*{"Speed and scale" construction of \prBm}
	
	We now present a way to construct \prBm from \Bm, via an analogy with the speed and scale construction of one dimensional diffusions.
	This will give us a better sense of what "typical" trajectories of this process look like.
	Indeed, we show that the excursions of \prBm outside the origin are given by the sequence of excursions of a \Bm outside a macroscopic region of length $ \frac{1}{\gamma} $, as illustrated in Figure~\ref{fig:speed_scale}.
	
	\begin{figure}[h]
		\centering
		\includegraphics[width=.9\textwidth]{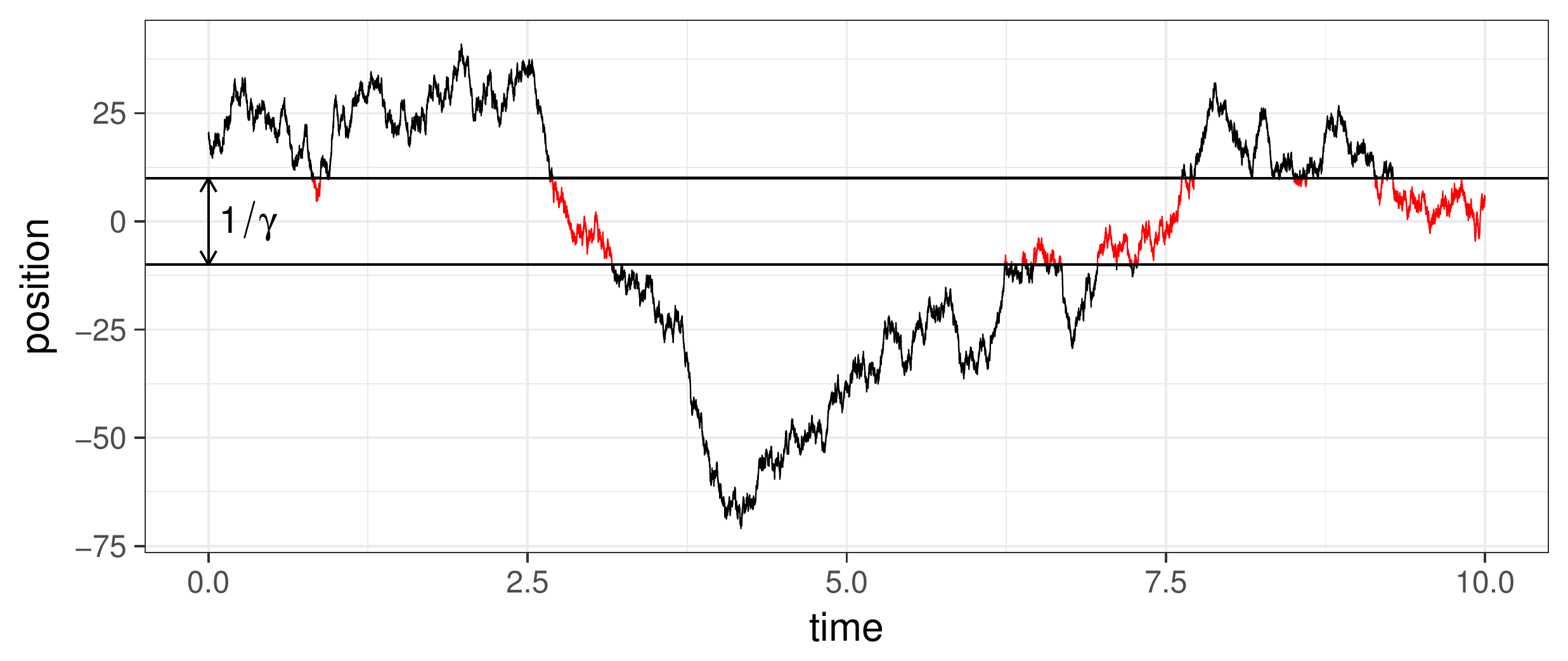}
		
		\includegraphics[width=.9\textwidth]{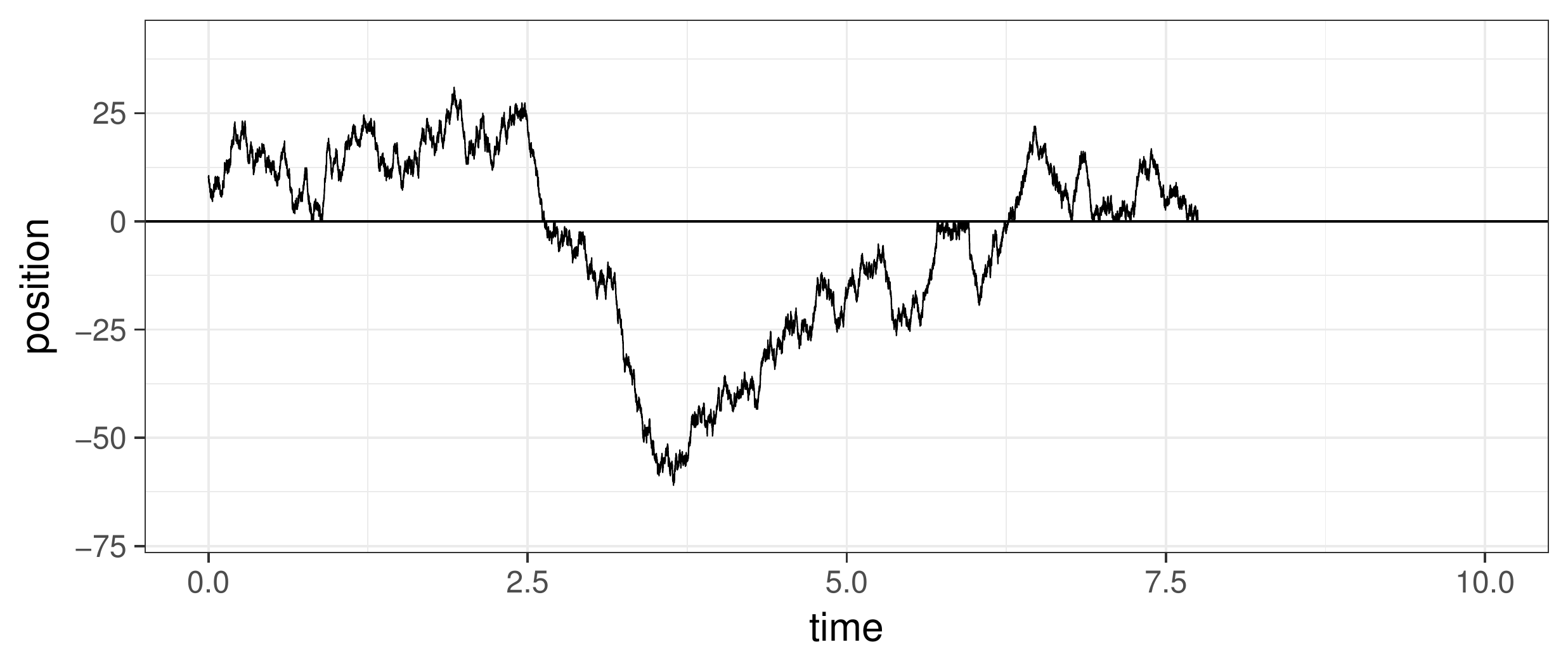}
		\caption{Speed and scale construction of \prBm}
		\label{fig:speed_scale}
		{\small Construction of \prBm as a time-changed \Bm, with $ \sigma^2 = 400 $, $ x = 20 $ and $ \gamma = 0.05 $.}
	\end{figure}
	
	\begin{wrapfigure}{R}{0.4\textwidth}
		\centering
		\includegraphics[width=0.39\textwidth]{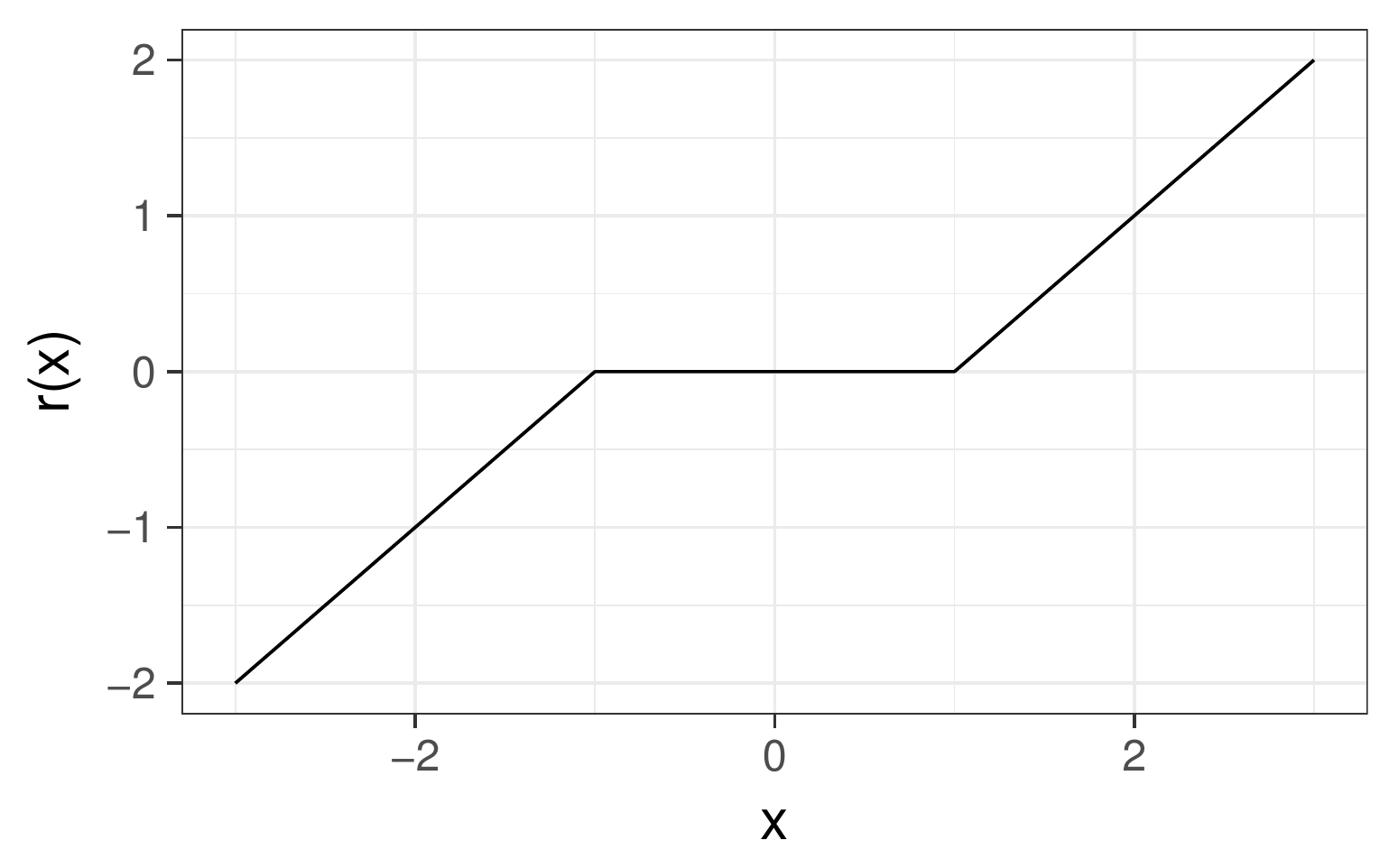}
		\caption{Graph of the function $ r : \R \to \ddot{\R} $ for $ \gamma = 0.5 $.} \label{fig:scale_function}
	\end{wrapfigure}
	
	Fix $ \gamma \in (0, +\infty) $ and suppose for simplicity that $ \sigma^2 = 1 $.
	Define $ r : \R \to \ddot{\R} $ by
	\begin{equation*}
	r(x) = \begin{cases}
	x - \frac{1}{2\gamma} & \text{ if } x > \frac{1}{2\gamma}, \\
	x + \frac{1}{2\gamma} & \text{ if } x < - \frac{1}{2\gamma}, \\
	0^+ & \text{ if } 0 \leq x \leq \frac{1}{2\gamma}, \\
	0^- & \text{ if } - \frac{1}{2\gamma} \leq x < 0,
	\end{cases}
	\end{equation*}
	(see Figure~\ref{fig:scale_function}).
	Further define $ r^{-1} : \ddot{\R} \to \R $ by
	\begin{equation*}
	r^{-1}(x) = x + \sign(x) \frac{1}{2\gamma}.
	\end{equation*}
	(Note that $ r^{-1} $ is only the right inverse of $ r $, \textit{i.e.} $ r \circ r^{-1} = Id_{\ddot{\R}} $ but $ r^{-1} \circ r \neq Id_\R $.)
	Now fix $ x \in \ddot{\R} $ and let $ \proc{B} $ be standard \Bm started from $ r^{-1}(x) $.
	Also set, for $ t \geq 0 $,
	\begin{equation} \label{def:tau}
	\tau(t) = \inf \left\lbrace \tau > 0 : \int_{0}^{\tau} \1{ \abs{B_s} > \frac{1}{2\gamma}} ds > t \right\rbrace.
	\end{equation}
	Finally, let $ X_t = r(B_{\tau(t)}) $.
	
	\begin{proposition} \label{prop:speed_scale}
		The process $ \proc{X} $ is \prBm started from $ x $, \textit{i.e.} it is a solution to the martingale problem associated with $ \left( L^\gamma, \delta_x \right) $.
	\end{proposition}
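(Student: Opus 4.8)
The plan is to check directly that $\proc{X}$ satisfies Definition~\ref{definition_prBm} with initial law $\delta_x$: that it is \cadlag, $\ddot{\R}$-valued and Markov, and that for every $f \in \mathcal{D}^\gamma$ the process \eqref{martingale_pb} is a martingale. The idea for the last (and main) point is to lift $f$ to a function $h$ on the real line, apply It\^o's formula to $\proc{B}$, and then undo the time change $\tau$. As a preliminary, observe that $\tau$ in \eqref{def:tau} is the right-continuous inverse of the continuous, non-decreasing additive functional $A_u = \int_0^u \1{\abs{B_s} > 1/(2\gamma)}\,ds$, so that $A_{\tau(t)} = t$ for all $t$ and, crucially, $B_{\tau(t)} \notin (-1/(2\gamma),1/(2\gamma))$ for all $t$ (if $\proc{B}$ were strictly inside the middle interval at time $\tau(t)$, then $A$ would be constant on a neighbourhood of $\tau(t)$, which contradicts the definition of $\tau(t)$ as an infimum). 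Since $r$ is continuous on $\{\abs{y} \geq 1/(2\gamma)\}$ and $u \mapsto B_{\tau(u)}$ is \cadlag with values in that closed set, $\proc{X}$ is \cadlag and $X_0 = r(r^{-1}(x)) = x$; note also that $\proc{X}$ genuinely jumps (between $0^+$ and $0^-$) at the instants when $\proc{B}$ makes an excursion inside $(-1/(2\gamma),1/(2\gamma))$ running from one endpoint to the other, which is why $\ddot{\R}$ is equipped with the metric $d$. One further records $B_{\tau(t)} = r^{-1}(X_t)$, since $r^{-1}$ is a genuine inverse of $r$ on $\{\abs{y}\geq 1/(2\gamma)\}$.

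Given $f \in \mathcal{D}^\gamma$, define $h : \R \to \R$ by $h(y) = f(r(y))$ for $\abs{y} \geq 1/(2\gamma)$, and, on the middle interval $\abs{y} < 1/(2\gamma)$, let $h$ be the affine function with $h(1/(2\gamma)) = f(0^+)$ and slope $f'(0^+)$. At $y = 1/(2\gamma)$ both $h$ and $h'$ are continuous by construction; at $y = -1/(2\gamma)$ the transmission condition \eqref{transmission_condition} does the same work, since it gives $h(-1/(2\gamma)) = f(0^+) - f'(0^+)/\gamma = f(0^-)$ and $h'(-1/(2\gamma)) = f'(0^+) = f'(0^-)$. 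Thus $h \in C^1(\R)$, its derivative $h'$ is absolutely continuous with $h''(y) = f''(r(y))\1{\abs{y} > 1/(2\gamma)}$ for Lebesgue-a.e. $y$, and, because $B_{\tau(t)} \notin (-1/(2\gamma),1/(2\gamma))$, we have $f(X_t) = f(r(B_{\tau(t)})) = h(B_{\tau(t)})$ for all $t$. (Working with $h$ rather than the discontinuous $f \circ r$ is what keeps everything a semimartingale and removes spurious local-time terms; the redefinition of $f\circ r$ on the middle interval is harmless precisely because $\proc{B}$ is never seen there after the time change.)

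The generalised It\^o formula applied to the $C^1$ function $h$ (no local-time terms, as $h'$ is continuous) gives
\begin{equation*}
h(B_{\tau(t)}) = h(B_0) + \int_0^{\tau(t)} h'(B_u)\,dB_u + \frac{1}{2}\int_0^{\tau(t)} h''(B_u)\,du .
\end{equation*}
Since $h'' = 0$ on $(-1/(2\gamma),1/(2\gamma))$, the set $\{u : \abs{B_u} = 1/(2\gamma)\}$ is Lebesgue-null, and $\1{\abs{B_u} > 1/(2\gamma)}\,du = dA_u$, the change of variables $u = \tau(s)$ in the Stieltjes integral yields
\begin{equation*}
\frac{1}{2}\int_0^{\tau(t)} h''(B_u)\,du = \frac{1}{2}\int_0^{\tau(t)} f''(r(B_u))\,dA_u = \frac{1}{2}\int_0^t f''(r(B_{\tau(s)}))\,ds = \int_0^t L^\gamma f(X_s)\,ds
\end{equation*}
(recall $\sigma^2 = 1$). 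With $h(B_0) = f(x)$ this gives $f(X_t) - \int_0^t L^\gamma f(X_s)\,ds = f(x) + M_{\tau(t)}$, where $M_u = \int_0^u h'(B_v)\,dB_v$ is a continuous local martingale for the natural filtration $\mathcal{F}$ of $\proc{B}$. Each $\tau(t)$ is an $\mathcal{F}$-stopping time and $t \mapsto \tau(t)$ is non-decreasing and right-continuous, so $(M_{\tau(t)})_{t\geq 0}$ is an $(\mathcal{F}_{\tau(t)})$-local martingale; a routine localisation along the exit times of $\proc{B}$ from growing intervals (equivalently, treat first $f$ with bounded first and second derivatives and then approximate) upgrades it to a true martingale, and since $\sigma(X_u : u\leq s) \subseteq \mathcal{F}_{\tau(s)}$ it is also a martingale for the filtration generated by $\proc{X}$. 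Finally $\proc{X}$ is Markov: by the strong Markov property of $\proc{B}$ at $\tau(t)$, the process $(B_{\tau(t)+u})_{u\geq 0}$ is a \Bm started from $B_{\tau(t)} = r^{-1}(X_t)$ and independent of $\mathcal{F}_{\tau(t)}$, and $(X_{t+v})_{v\geq 0}$ is obtained from it by the very same construction (alternatively one invokes Proposition~\ref{propo:well_posedness} together with the general theory of martingale problems). Hence $\proc{X}$ is a solution of the martingale problem associated with $(L^\gamma,\delta_x)$.

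The steps requiring the most care are the change-of-variables identity $\1{\abs{B_{\tau(s)}} > 1/(2\gamma)}\,d\tau(s) = ds$ — one must account for the jumps of $\tau$, which correspond exactly to the side-switching excursions of $\proc{B}$ inside $(-1/(2\gamma),1/(2\gamma))$ — and the localisation turning the time-changed local martingale into a genuine martingale for an arbitrary $f \in \mathcal{D}^\gamma$, whose second derivative need not be bounded. By contrast, the lift $h$ and the verification that $h \in C^1$ are the conceptually clean part: they are where the transmission condition \eqref{transmission_condition} enters, the macroscopic length $1/\gamma$ of the flattened region being calibrated precisely so that the affine interpolation has slope $f'(0^\pm)$.
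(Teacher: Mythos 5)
Your proof is correct in its essentials, but it takes a genuinely different route from the paper's. The paper verifies the martingale property through Proposition~4.1.7 of \cite{ethier_markov_1986}: it computes the pointwise generator limit $\lim_{t\downarrow 0}t^{-1}\E[0^+]{f(X_t)-f(0^+)}$ by a second-order Taylor expansion at $0^+$, controls all error terms with moment bounds coming from Lemma~\ref{lemma:speed_scale_absolute_value} (that $\abs{X}$ is reflected Brownian motion), and makes the transmission condition \eqref{transmission_condition} enter through the optional-stopping identity $\E[0^+]{X_t}=\gamma^{-1}\P[0^+]{X_t\leq 0^-}$, which cancels the first-derivative term against the jump term $(f(0^-)-f(0^+))\P[0^+]{X_t\leq 0^-}$. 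You instead pull $f$ back through the scale function: the affine extension $h$ of $f\circ r$ across $[-\tfrac{1}{2\gamma},\tfrac{1}{2\gamma}]$ is $C^1$ precisely because of \eqref{transmission_condition} (your computation $h(-\tfrac{1}{2\gamma})=f(0^+)-f'(0^+)/\gamma=f(0^-)$ is the whole point), and then the generalised It\^o formula plus the inverse-time-change identity $\int_0^{\tau(t)}g(u)\,dA_u=\int_0^t g(\tau(s))\,ds$ does the rest. Your route is more conceptual — it exhibits \eqref{transmission_condition} as the $C^1$-gluing condition for the scale-function pullback, which the paper only hints at in the remark that $r^{-1}$ is the scale function — and it does not need Lemma~\ref{lemma:speed_scale_absolute_value} at all for this proposition; the paper's route avoids the generalised It\^o formula and the local-to-true-martingale upgrade at the cost of heavier bookkeeping. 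The one place you should flesh out is that upgrade: for general $f\in\mathcal{D}^\gamma$ the derivatives need not be bounded, and localisation only yields a local martingale, so you must either prove an integrability bound such as $\E{\langle M\rangle_{\tau(t)}}<\infty$ (which requires controlling the time $\tau(t)-t$ spent by $B$ in the middle interval, e.g.\ via the occupation-time formula and the expected local time at $\pm\tfrac{1}{2\gamma}$) or make the approximation argument explicit; note that the paper faces, and resolves, an analogous issue when it justifies the uniform integrability of $(B_{s\wedge\tau(t)},s\geq 0)$ via Doob's inequality. This is a fixable technical point, not a flaw in the strategy.
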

	
	We prove this result in Subsection~\ref{subsec:speed_scale}.
	The construction is illustrated in Figure~\ref{fig:speed_scale}.
	In words, we map the two intervals $ (-\infty, - \frac{1}{2\gamma}] $ and $ [\frac{1}{2\gamma}, +\infty) $ onto $ (-\infty, 0^-] $ and $ [0^+, +\infty) $, and we change time in order to drop the time intervals where $ \abs{B_s} \leq \frac{1}{2\gamma} $.
	
	\begin{remark}
		From this construction, we recover the property stated by Nagylaki \citep[Equation~56]{nagylaki_clines_1976} that, for $ 0 < x < y $,
		\begin{equation*}
		\P[x]{ X_t \text{ reaches $ 0^- $ before $ y $} } = \frac{y - x}{y + \frac{1}{\gamma}}.
		\end{equation*}
		More generally, Proposition~\ref{prop:speed_scale} shows that $ r^{-1} $ is the scale function of \prBm.
	\end{remark}
	
	\begin{cor}
		For any $ \gamma \in [0,+\infty] $, the martingale problem associated to $ L^\gamma $ is well posed, \textit{i.e.} it has a unique solution.
	\end{cor}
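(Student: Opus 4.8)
The plan is to read the statement as the conjunction of existence and uniqueness of a $D(\R_+,\ddot{\R})$-valued solution, for every $\gamma \in [0,+\infty]$ (and every initial distribution), and to assemble it from the results already established. Uniqueness is precisely the content of Proposition~\ref{propo:well_posedness}; since that proof proceeds via the general theory of martingale problems in Section~4.4 of \cite{ethier_markov_1986}, it also yields uniqueness of the solution with any prescribed initial law, so no additional work is needed on the uniqueness side. It remains to produce, for each $\gamma$, at least one solution.

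For $\gamma \in (0,+\infty)$ such a solution is already available: by Proposition~\ref{prop:speed_scale}, for each $x \in \ddot{\R}$ the time-changed Brownian motion $X^x_t = r(B_{\tau(t)})$, with $B$ started from $r^{-1}(x)$, solves the martingale problem associated with $(L^\gamma, \delta_x)$. To obtain a solution with an arbitrary initial distribution $\mu$, I would check that $x \mapsto \p^{X^x}$ is Borel measurable from $\ddot{\R}$ into the space of probability measures on $D(\R_+,\ddot{\R})$ — which is transparent from the explicit formula, since $r$ and $r^{-1}$ are measurable and $B$ depends measurably on its starting point — and then verify that the mixture $\int_{\ddot{\R}} \p^{X^x}(\,\cdot\,)\, \mu(dx)$ is a solution to the martingale problem for $(L^\gamma, \mu)$: conditionally on the starting point being $x$ the process is distributed as $X^x$, so for each $f \in \mathcal{D}^\gamma$ the process $f(X_t) - \int_0^t L^\gamma f(X_s)\, ds$ is a martingale under each $\p^{X^x}$ and hence, after integrating against $\mu$, under the mixture as well. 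This reduction from a fixed starting point to a general initial law is the one genuinely ``technical'' point, and it is entirely routine.

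Finally I would treat the two degenerate values $\gamma \in \{0,+\infty\}$, which are not covered by the speed-and-scale construction (for $\gamma = 0$ one would need to start $B$ from $r^{-1}(x) = \pm\infty$). Here I would invoke the classical descriptions recalled after Definition~\ref{definition_prBm}: when $\gamma = 0$, condition \eqref{transmission_condition} reduces to $f'(0^-) = f'(0^+) = 0$, a Neumann condition on each half line, and reflected Brownian motion on the half line containing the starting point solves the martingale problem for $L^0$ (see Exercise~VII.1.23 in \cite{revuz_continuous_2013}); when $\gamma = +\infty$, \eqref{transmission_condition} forces $f(0^-) = f(0^+)$ and $f'(0^-) = f'(0^+)$, so $\ddot{\R}$ collapses to $\R$ and standard Brownian motion solves the martingale problem for $L^\infty$. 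Should one want a self-contained check, each follows from the It\^o--Tanaka formula applied to $f \in \mathcal{D}^0$ (resp. $f \in \mathcal{D}^\infty$): in the first case the local-time term at the origin carries the factor $f'(0^+) = 0$ and vanishes, in the second $f'$ is continuous across $0$ so no local-time term appears, and in both cases one is left with exactly $f(X_t) - \int_0^t \frac{\sigma^2}{2} f''(X_s)\,ds$ as a martingale. Combining these existence statements with Proposition~\ref{propo:well_posedness} yields well-posedness for every $\gamma \in [0,+\infty]$. I do not anticipate any real obstacle here; the content of the corollary is just the bookkeeping needed to glue together the two preceding propositions with the two boundary cases.
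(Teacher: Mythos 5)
Your proposal is correct and matches the paper's intended argument: the corollary is stated without an explicit proof precisely because it is the combination of Proposition~\ref{propo:well_posedness} (uniqueness) with the existence supplied by the speed-and-scale construction of Proposition~\ref{prop:speed_scale} for $\gamma \in (0,+\infty)$, the boundary cases $\gamma = 0$ and $\gamma = +\infty$ being covered by the remark after Definition~\ref{definition_prBm} identifying $L^0$ and $L^\infty$ as the generators of reflected and standard Brownian motion. Your additional care about general initial distributions and the It\^o--Tanaka check for the degenerate cases is sound and routine, as you say.
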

	
	Let $ \ddot{\pi} : \ddot{\R} \to \R $ be the natural projection of $ \ddot{\R} $ onto $ \R $ (\textit{i.e.} mapping both $ 0^+ $ and $ 0^- $ onto $ 0 $).
	We sometimes also call the projection $ \proc{\ddot{\pi}(X_t)}{t\geq 0} $ \prBm, even though the latter isn't a Markov process.
	(For example, as we shall see below, the sequence of \rw* considered in Subsection~\ref{subsec:scaling_limit} converges to the projection of \prBm.)
	
	\paragraph*{Construction involving the local time at the origin}
	
	From the previous construction, one is led to think that $ \proc{\abs{X_t}}{t\geq 0} $ has the law of reflected \Bm.
	It is then natural to ask if \prBm can be constructed by randomly "flipping" the excursions of reflected \Bm.
	The next proposition provides such a construction.
	It turns out that the crossing times of the origin are the times at which the local time at the origin of the process reaches the arrival times of an independent Poisson process with parameter $ \gamma $, as in Figure~\ref{fig:construction_local_time}.
	
	Fix $ x \in \ddot{\R} $ and let $ \proc{W} $ be reflected \Bm on $ \R_+ $ started from $ \abs{x} $.
	Let $ \left( N(t), t\geq 0 \right) $ be a Poisson process with rate $ \gamma \in (0,\infty) $, independent of $ \proc{W} $.
	Let $ L^0_t(W) $ denote the local time accumulated at the origin up to time $ t $ by $ W $, that is,
	\begin{equation*}
		L^0_t(W) = \lim_{\varepsilon \downarrow 0} \frac{1}{2 \varepsilon} \int_{0}^{t} \1{\abs{W_s} \leq \varepsilon} ds
	\end{equation*}
	(see \citep[Chapter~VI]{revuz_continuous_2013}).
	Set
	\begin{equation*}
	X_t = \sign(x) (-1)^{N(L^0_t(W))} W_t,
	\end{equation*}
	where $ \pm 1 \times 0 = 0^\pm $ (see Figure~\ref{fig:construction_local_time}).
	
	\begin{figure}[h]
		{\centering
			\includegraphics[width=\textwidth]{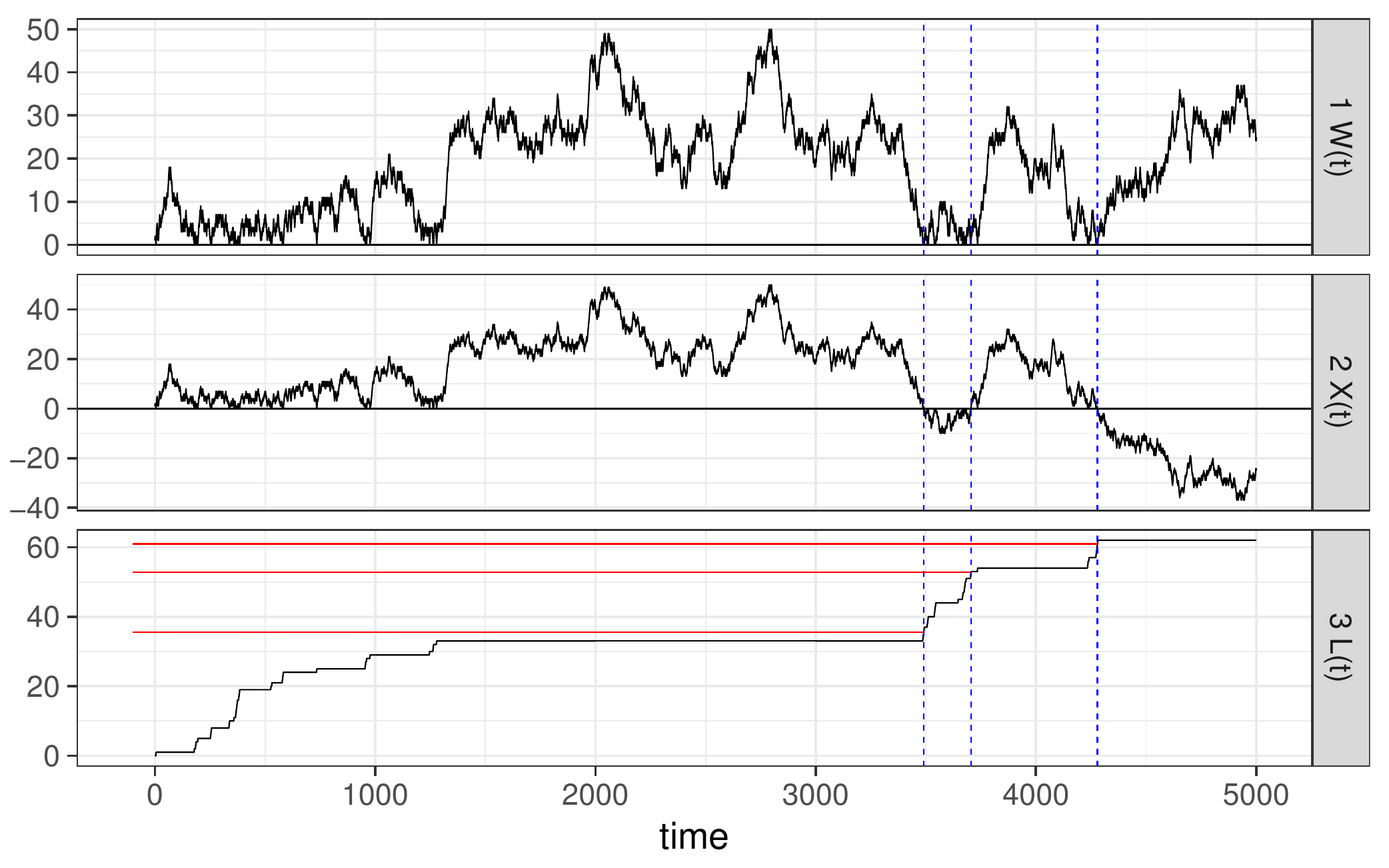}
			\caption{Construction of partially reflected \Bm involving the local time at the origin}
			\label{fig:construction_local_time}}
		{\small Top graph shows a realisation of reflected Brownian motion $ W_t $. Bottom graph shows its local time accumulated at the origin $ L(t) $. The heights of horizontal red lines are drawn according to a Poisson process on the y axis. The graph in the middle is obtained by "flipping" $ W_t $ at the times when $ L(t) $ reaches the red lines.
		The corresponding process is distributed as the projection of \prBm.}
	\end{figure}
	
	\begin{proposition} \label{prop:local_time_construction}
		The process $ \proc{X} $ is \prBm started from $ x $.
	\end{proposition}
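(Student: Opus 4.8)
The plan is to verify directly that the process $\proc{X}$ constructed here solves the martingale problem for $(L^\gamma,\delta_x)$; since that martingale problem is well posed (Proposition~\ref{propo:well_posedness} gives uniqueness and Proposition~\ref{prop:speed_scale} existence), and a solution of a well-posed martingale problem is automatically Markov, this shows that $\proc{X}$ is \prBm started from $x$. I take $\sigma^2=1$, so $L^\gamma f=\tfrac12 f''$ (the general case only rescales constants), and by the symmetry of the construction I assume $x>0$, so $X_t=(-1)^{N(L^0_t(W))}W_t$. Write $\ell_t=L^0_t(W)$ and let $0=T_0<T_1<T_2<\cdots$ be the successive times at which $t\mapsto N(\ell_t)$ jumps, i.e.\ the times at which $\ell$ hits a point of the Poisson process $N$.

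First I would record the structural facts: because $\ell$ is continuous and increases only on the closed set $\{W=0\}$, one has $W_{T_k}=0$ for every $k\ge1$; hence $X$ changes sheet only at the $T_k$'s, only finitely many of which ($N(\ell_t)$ of them) lie in $[0,t]$, and a short check at those times shows $\proc{X}$ is a genuine \cadlag $\ddot{\R}$-valued process with $X_0=x$. Next I would use the Skorokhod decomposition $W_t=x+\beta_t+\ell_t$ with $\beta$ a standard Brownian motion (an exact identity with the normalisation of $L^0(W)$ used in the statement). Now fix $f\in\mathcal{D}^\gamma$ and set $g_0(w)=f(w)$, $g_1(w)=f(-w)$ for $w\ge0$, so that $f(X_s)=g_{k\bmod 2}(W_s)$ on each $[T_k,T_{k+1})$. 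Applying It\^o's formula to $g_{k\bmod 2}(W)$ on $[T_k\wedge t,T_{k+1}\wedge t)$, using $\int g'(W_s)\,d\ell_s=g'(0)\ell$ together with the jumps of $f(X)$, and summing over $k$, I obtain
\begin{equation*}
f(X_t)=f(x)+\int_0^t(-1)^{N(\ell_s)}f'(X_s)\,d\beta_s+\int_0^t L^\gamma f(X_s)\,ds+\gamma\Delta\int_0^t(-1)^{N(\ell_s)}\,d\ell_s+\Delta\,\frac{(-1)^{N(\ell_t)}-1}{2},
\end{equation*}
where $\Delta:=f(0^+)-f(0^-)$. Two inputs appear here: the transmission condition \eqref{transmission_condition}, which gives $g_0'(0)=f'(0^+)=\gamma\Delta$ and $g_1'(0)=-f'(0^-)=-\gamma\Delta$, i.e.\ $g_{k\bmod 2}'(0)=(-1)^k\gamma\Delta$, so the reflection terms sum to the third term; and the fact that the jump of $f(X)$ at $T_k$ is $g_{k\bmod 2}(0)-g_{(k-1)\bmod 2}(0)=(-1)^k\Delta$, whose partial sums give the fourth term via $\sum_{j=1}^n(-1)^j=\tfrac12((-1)^n-1)$.

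The crux is that the last two terms combine into a martingale. For a rate-$\gamma$ Poisson process, $u\mapsto(-1)^{N(u)}-1+2\gamma\int_0^u(-1)^{N(v)}\,dv$ is a martingale (jumps $-2(-1)^{N(v-)}$ at rate $\gamma$, with compensator $-2\gamma\int_0^u(-1)^{N(v)}\,dv$). Since $\ell=L^0(W)$ is continuous, nondecreasing, adapted to $W$, independent of $N$, and $\E{\ell_t}<\infty$, a standard time-change argument — conditioning on $W$ and applying optional sampling — shows that
\begin{equation*}
P_t:=(-1)^{N(\ell_t)}-1+2\gamma\int_0^t(-1)^{N(\ell_s)}\,d\ell_s
\end{equation*}
is a martingale for $\mathcal{H}_t=\mathcal{F}^W_t\vee\mathcal{F}^N_{\ell_t}$. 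Substituting $\Delta\,\tfrac{(-1)^{N(\ell_t)}-1}{2}=\tfrac{\Delta}{2}P_t-\gamma\Delta\int_0^t(-1)^{N(\ell_s)}\,d\ell_s$ into the display cancels the third term exactly, leaving
\begin{equation*}
f(X_t)-\int_0^t L^\gamma f(X_s)\,ds=f(x)+\int_0^t(-1)^{N(\ell_s)}f'(X_s)\,d\beta_s+\tfrac{\Delta}{2}P_t,
\end{equation*}
a local martingale for $\mathcal{H}_t$, hence — since $\mathcal{F}^X_t\subseteq\mathcal{H}_t$ and the left-hand side is $\mathcal{F}^X_t$-measurable — also for the filtration generated by $\proc{X}$, by the tower property. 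A routine localisation (stopping at the first exit of $X$ from $[-R,R]$, letting $R\to\infty$, and using that $f$ is bounded) upgrades this to a true martingale, which is exactly what the martingale problem demands.

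I expect the main obstacle to be precisely this cancellation: keeping the signs straight across the alternating sheets and recognising the accumulated jump term $\Delta\,\tfrac{(-1)^{N(\ell_t)}-1}{2}$ as a compensator that exactly offsets the signed-local-time term produced by It\^o's formula. That is the only place where the transmission condition \eqref{transmission_condition} and the rate-$\gamma$ Poisson structure enter, and it is what forces the two half-line diffusions to be glued together in the manner encoded by $\mathcal{D}^\gamma$. The remaining points — that $X\in D(\R_+,\ddot{\R})$, the normalisation of $L^0(W)$ in the Skorokhod decomposition, the independence used in the time change, and the local-to-true martingale step — are routine.
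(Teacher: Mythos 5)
Your argument is correct in substance, but it takes a genuinely different route from the paper. The paper does not verify the martingale problem for this construction directly: it starts from the speed-and-scale process $X_t=r(B_{\tau(t)})$, already known to be \prBm by Proposition~\ref{prop:speed_scale}, and shows that \emph{that} process admits exactly the decomposition $\sign(x)(-1)^{N(L^0_t(W))}W_t$ — identifying the one-sided local times of $X$ at the origin with the local times of $B$ at $\pm\frac{1}{2\gamma}$ via Tanaka's formula, invoking the Ray--Knight theorem to see that the local time accumulated between sign changes is exponential with parameter $\gamma$, and using Knight's orthogonal-martingale theorem (Lemma~\ref{lemma:independence}) to get the independence of $N$ and $W$. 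You instead take $W$ and $N$ as given and check the martingale problem by hand: It\^o's formula on each excursion between sign changes via the Skorokhod decomposition $W_t=x+\beta_t+\ell_t$, the transmission condition \eqref{transmission_condition} converting the reflection term into $\gamma\Delta\int_0^t(-1)^{N(\ell_s)}d\ell_s$, and the compensator identity for $u\mapsto(-1)^{N(u)}$ making this cancel against the accumulated jumps $\Delta\frac{(-1)^{N(\ell_t)}-1}{2}$; I checked the signs and the cancellation is exact. Your approach is self-contained (no Ray--Knight, no Knight's theorem, no reliance on the speed-and-scale construction except, as you note, for invoking well-posedness to get the Markov property) and it makes visible precisely where \eqref{transmission_condition} and the rate $\gamma$ enter; the price is the bookkeeping at the sign-change times and the filtration/localisation care you correctly flag as necessary. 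The paper's route avoids any It\^o computation at the crossings and, more importantly, produces as by-products the local-time identities and the Ray--Knight picture that are reused later for the transition density (Corollary~\ref{cor:transition_density}) and for the random-walk limit (Lemma~\ref{lemma:local_time}). One point to tighten if you write this up: the passage from local to true martingale requires a bound on $L^\gamma f(X_s)$ along the stopped paths, so you should make explicit the (implicit) standing assumption that $f''$ is bounded for $f\in\mathcal{D}^\gamma$, which the paper's own argument also uses tacitly.
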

	
	We prove this result in Subsection~\ref{subsec:local_time} using the previous construction and a Ray-Knight theorem \citep[Theorem 6.4.7]{shreve_brownian_1991}, which states that the local time accumulated by Brownian motion at $ \frac{1}{2\gamma} $ before reaching $ -\frac{1}{2\gamma} $ is an exponential \rv with parameter $ \gamma $.
	
	Proposition~\ref{prop:local_time_construction} yields an explicit formula for the transition density of \prBm.
	For $ t>0 $ and $ x \in \R $, set
	\begin{equation*}
	G_t(x) = \frac{1}{\sqrt{2\pi t}} \exp \left( -\frac{x^2}{2 t} \right).
	\end{equation*}
	
	\begin{cor} \label{cor:transition_density}
		If $ \proc{X} $ is \prBm with permeability $ \gamma \in (0,\infty) $ started from $ x \in \ddot{\R} $, then $ \P[x]{X_t \in dy} = g_t(x,y) dy $ with
		\begin{equation*}
		g_t(x,y) = \begin{cases}
		G_t(x-y) + G_t(x+y) - 2\gamma \int_{0}^{+\infty} e^{-2\gamma l} G_t( \abs{x} + \abs{y} + l ) dl & \text{ if } xy \geq 0^+, \\
		2\gamma \int_{0}^{+\infty} e^{-2\gamma l} G_t( \abs{x} + \abs{y} + l ) dl & \text{ if } xy \leq 0^-.
		\end{cases}
		\end{equation*}
	\end{cor}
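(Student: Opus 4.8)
The plan is to read the transition density directly off the local-time construction of Proposition~\ref{prop:local_time_construction}. First, by the symmetry of that construction under $x \mapsto -x$ (equivalently: $L^\gamma$ and $\mathcal{D}^\gamma$ are invariant under $f \mapsto f(-\,\cdot\,)$, so by well-posedness the law started from $-x$ is the image of the law started from $x$ under the reflection of $\ddot{\R}$), it is enough to prove the formula for $x = a \in [0^+,+\infty)$, the case $x<0$ following from $g_t(x,y) = g_t(-x,-y)$. So let $\proc{W}$ be reflected \Bm on $\R_+$ started from $a = \abs{x}$, let $N$ be a rate-$\gamma$ Poisson process independent of $W$, and $X_t = (-1)^{N(L^0_t(W))} W_t$. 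Since $W_t \ge 0$ we have $\abs{X_t} = W_t$, so $X_t \in dy$ occurs for $y>0$ exactly when $W_t \in dy$ and $N(L^0_t(W))$ is even, and for $y<0$ exactly when $W_t \in d\abs{y}$ and $N(L^0_t(W))$ is odd.

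Next, conditioning on the path of $W$, the variable $N(L^0_t(W))$ is Poisson with mean $\gamma L^0_t(W)$, hence even with probability $\tfrac12(1+e^{-2\gamma L^0_t(W)})$ and odd with probability $\tfrac12(1-e^{-2\gamma L^0_t(W)})$. Writing $p_t(a,y) = G_t(a-y)+G_t(a+y)$ for the (classical) transition density of reflected \Bm and $q_t(a,y)\,dy = \E[a]{\1{W_t \in dy}\,e^{-2\gamma L^0_t(W)}}$, this gives
\[
g_t(x,y) = \tfrac12 p_t(a,\abs{y}) + \tfrac12 q_t(a,\abs{y}) \ \text{ if } xy \ge 0^+, \qquad g_t(x,y) = \tfrac12 p_t(a,\abs{y}) - \tfrac12 q_t(a,\abs{y}) \ \text{ if } xy \le 0^-,
\]
so everything reduces to computing $q_t(a,y)$ for $a,y \ge 0$.

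For that I would use the joint law of $(W_t, L^0_t(W))$. Realise $W = \abs{\beta}$ for a \Bm $\beta$ started from $a$, so that $L^0_t(W)$ is the semimartingale local time of $\beta$ at $0$. For $a=0$, Lévy's theorem identifies $(\abs{\beta_t}, L^0_t(W))$ in law with $(S_t-\beta_t, S_t)$, $S_t = \sup_{s\le t}\beta_s$, whose joint density is $\tfrac{2(\ell+y)}{\sqrt{2\pi t^3}}e^{-(\ell+y)^2/(2t)}$; for general $a$, conditioning on the first hitting time $T_0$ of $0$ by $\beta$ (density $\tfrac{a}{\sqrt{2\pi s^3}}e^{-a^2/(2s)}$) and using that the convolution of the hitting-time densities of levels $a$ and $a'$ is that of level $a+a'$, one obtains
\[
\P[a]{W_t \in dy,\ L^0_t(W) \in d\ell} = \frac{2(a+y+\ell)}{\sqrt{2\pi t^3}}\,e^{-(a+y+\ell)^2/(2t)}\,dy\,d\ell \quad (\ell>0),
\]
together with the atom $\P[a]{W_t\in dy,\ L^0_t(W)=0} = (G_t(a-y)-G_t(a+y))\,dy$ (the event $T_0>t$, handled by the reflection principle). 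Integrating against $e^{-2\gamma\ell}$ and using the identity $\tfrac{c}{\sqrt{2\pi t^3}}e^{-c^2/(2t)} = -\partial_c G_t(c)$, an integration by parts in $\ell$ turns the $\ell$-integral (with $c=a+y$) into $2G_t(a+y) - 4\gamma\int_0^\infty e^{-2\gamma\ell}G_t(a+y+\ell)\,d\ell$; adding the atom gives $q_t(a,y) = G_t(a-y)+G_t(a+y) - 4\gamma\int_0^\infty e^{-2\gamma\ell}G_t(a+y+\ell)\,d\ell$. Substituting into the two displayed expressions — the Gaussian part of $\tfrac12 q_t$ combining with $\tfrac12 p_t$ to give $G_t(x-y)+G_t(x+y)$ and leaving $-2\gamma\int_0^\infty e^{-2\gamma\ell}G_t(\abs{x}+\abs{y}+\ell)\,d\ell$ when $xy\ge 0^+$, and cancelling $p_t$ entirely and leaving $2\gamma\int_0^\infty e^{-2\gamma\ell}G_t(\abs{x}+\abs{y}+\ell)\,d\ell$ when $xy\le 0^-$ — yields exactly the stated formula. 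The only real obstacle is having the joint density of $(W_t, L^0_t(W))$ at hand in the correct normalisation of the local time; after that it is the integration-by-parts identity and routine bookkeeping.
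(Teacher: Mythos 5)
Your proof is correct and follows essentially the same route as the paper: both read the density off the local-time construction by conditioning on $L^0_t(W)$, using the parity probabilities $\tfrac12(1\pm e^{-2\gamma l})$ of the independent Poisson process, and then integrating the joint law of $(W_t, L^0_t(W))$ by parts in $l$. The only difference is that the paper cites this joint law (Corollary~3.3 of \cite{appuhamillage_occupation_2011}) while you rederive it via L\'evy's theorem and a first-passage decomposition, which makes the argument self-contained but does not change its structure.
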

	
	We derive this formula in Subsection~\ref{subsec:transition_density}.
	
	\subsection{Scaling limits of a class of \rw*} \label{subsec:scaling_limit}
	
	Let us now state the main convergence result, namely that \prBm is the scaling limit of a class of \rw* with an obstacle.
	We consider a more general case than \cite{nagylaki_clines_1976}, where the barrier to gene flow has width $ K \in \N \setminus \lbrace 0 \rbrace $ ($ K $ being the number of edges with reduced migration rate).
	The cases $ K=1 $ (the one considered in \cite{nagylaki_clines_1976}) and $ K=2 $ are illustrated in Figure~\ref{fig:jump_rates}.
	We define the process describing the motion of an ancestral lineage as follows.
	
	\begin{figure}[h]
		\centering
		a) \includegraphics[align=c,width=.9\textwidth]{migration_rates_k1-eps-converted-to.pdf}
		
		b) \includegraphics[align=c,width=.9\textwidth]{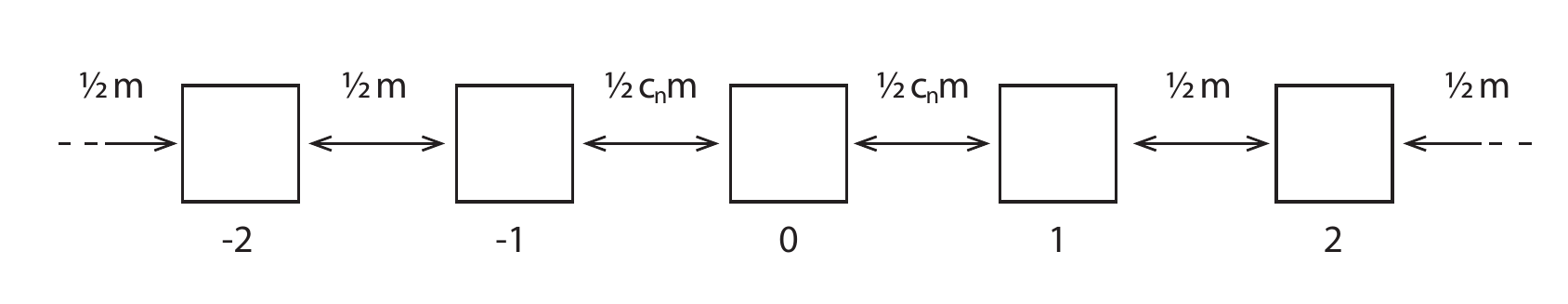}
		\caption{Jump rates of random walks with an obstacle}
		\label{fig:jump_rates}
		{\small Transition rates of the random walk $ (\xi_n(t), t \geq 0) $ in a) case $ K=1 $ and $ b) $ case $ K=2 $.}
	\end{figure}
	
	\begin{definition}[Random walk with an obstacle] \label{definition:rw_obstacle}
		Let $ \proc{c_n}{n \geq 1} $ be a sequence of positive real numbers, and fix $ m>0 $.
		Suppose that $ \proc{x_n^0}{n\geq 1} $ is a sequence of elements of $ \N^* $.
		\subparagraph*{If $ K $ is even,} let $ E = \Z $ and define, for $ i \neq j \in E $,
		\begin{equation} \label{jump_rates_even}
		q_n(i,j) = \begin{cases}
		\frac{m}{2} & \text{ if } \abs{i-j} = 1 \text{ and } \abs{i} \vee \abs{j} > \frac{K}{2}, \\
		c_n \frac{m}{2} & \text{ if } \abs{i - j} = 1 \text{ and } \abs{i} \vee \abs{j} \leq \frac{K}{2}, \\
		0 & \text{ otherwise.}
		\end{cases}
		\end{equation}
		\subparagraph*{If $ K $ is odd,} let $ E = \Z \setminus \lbrace 0 \rbrace $ and set, for $ i \neq j \in E $,
		\begin{equation} \label{jump_rates_odd}
		q_n(i,j) = \begin{cases}
		\frac{m}{2} & \text{ if } \abs{i-j} = 1 \text{ and } \abs{i} \vee \abs{j} > \frac{K+1}{2}, \\
		c_n \frac{m}{2} & \text{ if } \abs{i - j} = 1 \text{ and } \abs{i} \vee \abs{j} \leq \frac{K+1}{2}, \\
		& \text{ or if }  \lbrace i, j \rbrace = \lbrace +1, -1 \rbrace,  \\
		0 & \text{ otherwise.}
		\end{cases}
		\end{equation}
		Then let $ \left( \xi_n(t), t \geq 0 \right) $ be a continuous time \rw on $ E $ started from $ x_n^0 $ with jump rates $ q_n(\cdot,\cdot) $.
		That is, whenever $ \xi_n(t) = i $, the future of the random walk is determined as follows.
		Attach to each $ j \neq i $ in $ E $ such that $ q_n(i,j) > 0 $ an independent exponential random variable $ E_j $ with parameter $ q_n(i,j) $.
		Then at time $ J_1 = t + \inf \{ E_j \} $, the random walk jumps to state $ k = \argmin_j \{ E_j \}  $.
		This procedure is then repeated between each jump time of the random walk.
	\end{definition}
	
	For $ n\geq 1 $, set $ X_n(t) = \frac{1}{\sqrt{n}} \xi_n(nt) $.
	We now state conditions under which the rescaled \rw $ X_n = (X_n(t), t \geq 0) $ converges to \prBm.
	We equip the space of \cadlag functions from $ \R_+ $ to $ \R $ (denoted by $ \sko[\R_+]{\R} $) with the topology of Skorokhod convergence on compact time intervals.
	If $ d_{sko}^T( \cdot, \cdot ) $ is a metric for the Skorokhod convergence on $ \sko{\R} $ for $ T>0 $, then
	\begin{equation} \label{definition_metric}
	d_{sko}(f,g) = \int_{0}^{+\infty} e^{-t} ( d_{sko}^t(f,g) \wedge 1 ) dt
	\end{equation}
	is a metric for Skorokhod convergence on compact time intervals.
	
	\begin{theorem} \label{thm:convergence}
		Suppose $ \frac{1}{\sqrt{n}} x_n^0 \cvgas{n} x^0 $ with $ x^0 \neq 0 $ and $ \lim_{n\to \infty} \frac{\sqrt{n}}{K} c_n = \gamma \in [0,+\infty] $.
		Then as $ n \to \infty $, the sequence of real-valued processes $ \left( X_n(t), t\geq 0 \right) $ converges in distribution in $ ( \sko[\R_+]{\R}, d_{sko} ) $ to a continuous real-valued process $ (X(t), t \geq 0) $ which is (a projection on $ \R $ of) a solution to the martingale problem associated with $ \left( L^\gamma, \delta_{x^0} \right) $, with $ \sigma^2 = m $.
	\end{theorem}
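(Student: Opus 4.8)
The proof follows the classical route for convergence towards a well-posed martingale problem. We first prove tightness, in $(\sko[\R_+]{\R},d_{sko})$, of the rescaled walks $X_n$, and in fact of the $\ddot\R$-valued processes $\ddot X_n(t):=\xi_n(nt)/\sqrt n$, where $i\mapsto i/\sqrt n$ embeds $\Z\setminus\{0\}$ (resp.\ $\Z$) into $\ddot\R$, the origin of $\Z$ — present only when $K$ is even — being sent to $0^+$, which affects only a set of times that is negligible in the limit. We then show that every subsequential limit $\ddot X$ of $\ddot X_n$ solves the martingale problem associated with $(L^\gamma,\delta_{x^0})$ and $\sigma^2=m$; since this problem has a unique solution (Proposition~\ref{propo:well_posedness} together with the corollary to Proposition~\ref{prop:speed_scale}), the limit is identified, convergence along subsequences upgrades to convergence of the whole sequence, and $X_n=\ddot\pi(\ddot X_n)$ converges to the announced projection. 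The boundary cases $\gamma\in\{0,+\infty\}$, where the limit is reflected \Bm, resp.\ \Bm, are handled along the same lines with the test spaces $\mathcal{D}^0$, $\mathcal{D}^{+\infty}$; we focus on $\gamma\in(0,+\infty)$.

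\textbf{Tightness.}
The key point is that, because $q_n$ is invariant under $i\mapsto-i$, the absolute value $\bigl(\abs{\xi_n(t)}\bigr)_{t\ge0}$ is again a Markov chain: a reflected continuous-time walk which agrees with the symmetric walk of rate $m/2$ outside the $O(K)$ sites adjacent to the barrier and is merely slowed down on those. Up to time $nT$ it spends there a total time $O(\sqrt n)=o(n)$ — each visit to a barrier site crosses the slow edge with probability of order $c_n\asymp K/\sqrt n$, there are $O(\sqrt n)$ visits to any fixed site by the usual local-time scaling, and each crossing costs $O(1/c_n)=O(\sqrt n)$ units of time while the walk remains within $O(1/\sqrt n)$ of the origin. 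Hence $\abs{X_n}$ converges in distribution to reflected \Bm $W$ started from $\abs{x^0}$ with variance $m$; in particular $(\abs{X_n})_{n\ge1}$ is tight with asymptotically continuous paths, and compact containment for $X_n$ follows. Since $X_n=\pm\abs{X_n}$ and a sign change occurs only while $\xi_n$ sits on a barrier site, i.e.\ while $\abs{X_n}$ is within $O(1/\sqrt n)$ of $0$, the jumps of $X_n$ are uniformly of size $O(1/\sqrt n)\to0$, and on any excursion of $\abs{X_n}$ above a fixed level $X_n$ keeps a constant sign; this controls the Skorokhod modulus of $X_n$. Adding a control of the crossing times — no two barrier crossings occur within a shrinking rescaled time window, by the strong Markov property and the exponential local-time levels at which crossings occur (cf.\ Proposition~\ref{prop:local_time_construction}) — gives tightness of the $\ddot\R$-valued $\ddot X_n$, whose jumps, of size $\to 1$ for the metric of $\ddot\R$ at the crossings, match those of the expected limit.

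\textbf{Identification of the limit.}
Write $\mathcal{L}_n\phi(i)=\sum_jq_n(i,j)(\phi(j)-\phi(i))$ for the generator of $\xi_n$, so that $f_n(\xi_n(nt))-\int_0^t n\,\mathcal{L}_n f_n(\xi_n(ns))\,ds$ is a martingale for any bounded $f_n$. Fix $f\in\mathcal{D}^\gamma$ and set $f_n(i)=f(i/\sqrt n)$ for $\abs i$ bounded away from the barrier, with a suitable interpolation of $f_n$ on the $O(K)$ barrier sites. Away from the barrier Taylor's formula gives $n\,\mathcal{L}_n f_n(i)=\tfrac m2 f''(i/\sqrt n)+o(1)=L^\gamma f(i/\sqrt n)+o(1)$ uniformly on compacts (recall $\sigma^2=m$). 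At a barrier site — say $i=1$ when $K=1$ — with $\Delta=f(0^+)-f(0^-)$ and $c=f'(0^+)=f'(0^-)$ the same expansion yields
\[
n\,\mathcal{L}_n f_n(1)=\frac{m\sqrt n}{2}\bigl(c-\gamma\Delta\bigr)+\varepsilon_n,\qquad \varepsilon_n=o(\sqrt n),
\]
and the leading term vanishes \emph{exactly} because the transmission condition \eqref{transmission_condition} reads precisely $c=\gamma\Delta$; for general $K$ the same cancellation occurs, involving the discrete slope across the $K$ slow edges and the normalisation $\tfrac{\sqrt n}{K}c_n\to\gamma$. The surviving $o(\sqrt n)$ contribution of the $O(K)$ barrier sites, integrated against the occupation measure of those sites — whose rescaled mass up to time $t$ is $O(1/\sqrt n)$ — is $o(1)$, while the bulk part is $\int_0^t(\tfrac m2 f''(\ddot X_n(s))+o(1))\,ds$, which converges (after localising to keep $\ddot X_n$ in a compact) to $\int_0^t L^\gamma f(\ddot X_s)\,ds$, since $\ddot X$ spends no time at the origin. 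Passing to the limit along the subsequence in the prelimit martingale shows that $f(\ddot X_t)-\int_0^t L^\gamma f(\ddot X_s)\,ds$ is a martingale for every $f\in\mathcal{D}^\gamma$; since $\ddot X_0=x^0$, the limit solves the martingale problem for $(L^\gamma,\delta_{x^0})$, and well-posedness concludes.

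\textbf{Main obstacle.}
The substantive difficulty is the near-barrier analysis just sketched: one integrates a quantity that is a priori of order $\sqrt n$ against an amount of rescaled time of order $1/\sqrt n$, the $O(1)$ divergence being removed only through the transmission condition and the precise rate hypothesis. Making this rigorous requires a careful choice of the interpolated values of $f_n$ on the barrier sites — equivalently, proving that the barrier-localised additive functional $\int_0^t n\,\mathcal{L}_n f_n(\xi_n(ns))\,\1{\xi_n(ns)\text{ near }0}\,ds$ is negligible — together with quantitative local-time estimates for $\abs{\xi_n}$ near the slow region. An alternative route trades this bookkeeping for a quadratic-variation computation: introduce the discrete scale function $g_n$ of $\xi_n$ (the $q_n$-harmonic function that is affine of slope $1$ on the bulk sites and jumps by $\asymp 1/(2c_n)$ across the barrier, so that $\tfrac1{\sqrt n}g_n\to r^{-1}$ on $\ddot\R$); then $M_n(t)=\tfrac1{\sqrt n}g_n(\xi_n(nt))$ is a genuine martingale whose limit can be identified through a martingale central limit theorem allowing jumps — the jumps, of size $\to1/\gamma$, being the barrier crossings — together with Proposition~\ref{prop:speed_scale}, and one recovers $X_n=\tfrac1{\sqrt n}g_n^{-1}(\sqrt n\,M_n)$ by a continuous-mapping argument, using that $\tfrac1{\sqrt n}g_n^{-1}(\sqrt n\,\cdot)\to r$ uniformly on the range of $M$, on which $r$ is continuous.
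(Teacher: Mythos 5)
Your proposal is correct in outline but follows a genuinely different route from the paper. You identify subsequential limits by verifying the martingale problem directly: applying the rescaled discrete generator to $f(\cdot/\sqrt n)$ for $f\in\mathcal{D}^\gamma$, you find that the barrier sites contribute $\tfrac{m\sqrt n}{2}\bigl(f'(0^+)-\gamma(f(0^+)-f(0^-))\bigr)+o(\sqrt n)$, whose leading term vanishes exactly by the transmission condition \eqref{transmission_condition}; the surviving $o(\sqrt n)$ is then killed against an $O(1/\sqrt n)$ occupation-time bound for the barrier, and well-posedness (Proposition~\ref{propo:well_posedness}) identifies the limit. The paper never verifies the martingale problem in the limit: it decomposes $\abs{X_n}$ (up to an $O(1/\sqrt n)$ correction) as a martingale plus its running minimum, deduces via L\'evy's theorem that $\abs{X_\infty}$ is reflected \Bm (Lemma~\ref{lemma:absolute_value}), shows that the number of visits to the barrier before a crossing is geometric with parameter $\sim c_n/K$, so the local time accumulated between crossings converges to independent exponentials of parameter $\gamma$ independent of $\abs{X_\infty}$ (Lemma~\ref{lemma:local_time}), and then invokes the pathwise characterisation of Proposition~\ref{prop:local_time_construction}. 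Your route is the more systematic Ethier--Kurtz scheme and generalises readily; its cost is that functions in $\mathcal{D}^\gamma$ are discontinuous at $0$ as functions on $\R$, so you must prove tightness of the $\ddot{\R}$-valued processes, i.e.\ rule out accumulation of barrier crossings in shrinking time windows. Your sketch justifies this by appealing to Proposition~\ref{prop:local_time_construction}, which concerns the limit rather than the prelimit walks; a genuine prelimit estimate is still needed there (e.g.\ via the discrete scale function, a freshly crossed walk recrosses before reaching distance $\varepsilon$ with probability about $2\varepsilon\gamma/(1+2\varepsilon\gamma)$). The paper sidesteps $\ddot{\R}$-valued tightness entirely by working with the real projection and extracting the crossing times afterwards (Lemma~\ref{lemma:convergence_crossing_times}). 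Both routes rest on the same quantitative input, namely $\E{\nu^n(t)}=O(1/\sqrt n)$ (Lemma~\ref{lemma:occupation_time}), which you correctly isolate as the main obstacle but justify only heuristically; note that for $K\geq 2$ the central site is left only at rate $c_n m$, so a single visit there already costs $O(1/\sqrt n)$ of rescaled time and the bound is not a pure visit-counting argument. Your alternative via the discrete scale function is also sound and is precisely the discrete analogue of the speed-and-scale construction of Proposition~\ref{prop:speed_scale}.
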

	
	In other words, if $ \sqrt{n} c_n \to +\infty $, $ X_n $ converges to Brownian motion, if $ \sqrt{n} c_n \to 0 $, $ X_n $ converges to reflected \Bm, while if $ \frac{\sqrt{n}}{K} c_n \to \gamma \in (0,\infty) $, $ X_n $ converges to (the projection of) \prBm (recall that the latter takes values in $ \ddot{\R} $, its projection is obtained by identifying $ 0^+ $ and $ 0^- $ with $ 0 $).
	
	\begin{remark}
		In the case $ x^0 = 0 $, the convergence still holds provided the probability of first exiting the set $ [ - K/2, K/2 ] $ on the right converges as $ n \to \infty $.
		The initial distribution is then a convex combination of $ \delta_{0^+} $ and $ \delta_{0^-} $, given by the limits of the exit probabilities.
	\end{remark}
	
	Theorem~\ref{thm:convergence} is proved in Section~\ref{sec:convergence} in the case $ K=2 $.
	The generalisation to other values of $ K $ is straighforward, and the case $ K=1 $ introduces some simplifications, which makes the case $ K=2 $ more representative of the general case.
	
	Note that in Nagylaki's model presented in Figure~\ref{fig:nagylaki}, ancestral lineages are distributed as the \rw of Definition~\ref{definition:rw_obstacle} with $ K=1 $.
	
	\section{Constructions of \prBm} \label{sec:construction}
	
	\subsection{Speed and scale construction} \label{subsec:speed_scale}
	
	Here we prove that the process $ X_t = r(B_{\tau(t)}) $ defined in Subsection~\ref{subsec:definition} is a solution to the martingale problem associated with $ L^\gamma $.
	This proof will require the following lemma, proved in Subsection~\ref{subsec:speed_scale_absolute_value}.
	
	\begin{lemma} \label{lemma:speed_scale_absolute_value}
		Set $ W_t = \abs{X_t} $.
		Then $ \proc{W} $ is distributed as reflected \Bm.
	\end{lemma}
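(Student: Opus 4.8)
The plan is to express $W$ in terms of the reflected \Bm $R_t := \abs{B_t}$: after throwing away the time $R$ spends below the level $\frac{1}{2\gamma}$ and shifting down by $\frac{1}{2\gamma}$ one obtains $W$, and I will recognise the latter as the unique solution of a Skorokhod reflection problem driven by a Brownian path.

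Write $a := \frac{1}{2\gamma}$. First I would record the elementary identity $\abs{r(z)} = (\abs{z}-a)^+$ for all $z \in \R$, immediate from the three cases defining $r$; hence $W_t = \abs{X_t} = \bigl(\abs{B_{\tau(t)}}-a\bigr)^+ = \bigl(R_{\tau(t)}-a\bigr)^+$. Setting $A_s := \int_0^s \1{R_u > a}\,du$, the definition \eqref{def:tau} says exactly that $\tau$ is the right-continuous inverse of the continuous increasing function $A$; thus $A_{\tau(t)} = t$ for all $t$, and since every right neighbourhood of $\tau(t)$ contains times at which $R > a$, continuity of $R$ forces $R_{\tau(t)} \ge a$. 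In particular $W_t = R_{\tau(t)} - a \ge 0$, and $W_0 = \bigl(R_{\tau(0)}-a\bigr)^+ = \abs{r^{-1}(x)} - a = \abs{x}$ since $\tau(0) = 0$. Finally recall, by Tanaka's formula, that $R$ is a reflected \Bm: $R_t = \abs{x} + a + \beta_t + L^0_t(R)$, where $\beta_t := \int_0^t \sign(B_s)\,dB_s$ is a standard \Bm and $L^0(R)$ is the local time of $R$ at $0$.

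The core of the argument is a second use of Tanaka's formula, this time at the level $a$ applied to the semimartingale $R$:
\begin{equation*}
\bigl(R_t - a\bigr)^+ = \abs{x} + \int_0^t \1{R_s > a}\,dR_s + \frac12 L^a_t(R).
\end{equation*}
Since $a > 0$, the measure $dL^0_s(R)$, carried by $\{R_s = 0\}$, does not charge $\{R_s > a\}$, so $\int_0^t \1{R_s>a}\,dR_s = M_t$, where $M_t := \int_0^t \1{R_s>a}\,d\beta_s$ is a continuous local martingale with $\langle M\rangle_t = A_t$. Evaluating the identity at $\tau(u)$ and using $R_{\tau(u)} \ge a$ gives
\begin{equation*}
W_u = \abs{x} + M_{\tau(u)} + \frac12 L^a_{\tau(u)}(R).
\end{equation*}
Because $\langle M\rangle = A$ and $A_{\tau(u)} = u$, the Dambis--Dubins--Schwarz theorem, applied in the filtration $(\mathcal{F}_{\tau(u)})_{u\ge0}$, shows that $\widetilde\beta_u := M_{\tau(u)}$ is a standard \Bm. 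Put $K_u := \frac12 L^a_{\tau(u)}(R)$: it is continuous, non-decreasing, and $K_0 = 0$. I claim $K$ increases only on $\{u : W_u = 0\}$. Indeed $dL^a(R)$ is carried by $\{s : R_s = a\}$, and if $W_u > 0$, \textit{i.e.} $R_{\tau(u)} > a$, then by continuity $R > a$ on an open interval about $\tau(u)$, on which $L^a(R)$ is constant, while $\tau$ (which is continuous at such a $u$) maps a neighbourhood of $u$ into that interval, so $K$ is locally constant near $u$. Hence the pair $(W, K)$ solves the Skorokhod reflection problem for the Brownian path $\abs{x} + \widetilde\beta$, and by uniqueness of its solution (see e.g.\ \cite{revuz_continuous_2013}) $W$ is a reflected \Bm started from $\abs{x}$.

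I expect the main obstacle to be the time-change bookkeeping: checking carefully that $A_{\tau(u)} = u$ and $R_{\tau(u)} \ge a$, that $K$ grows only while $W_u = 0$ (which calls for some care about the flat intervals of $A$ and the structure of the zero set of $R - a$), and that the hypotheses of the Dambis--Dubins--Schwarz theorem are genuinely met for the time-changed filtration. Once these points are settled the identification of $W$ with reflected \Bm is immediate from uniqueness in the Skorokhod problem.
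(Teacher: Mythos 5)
Your proof is correct and follows essentially the same route as the paper's: a Tanaka decomposition of $W$ into a continuous local martingale plus a local-time term, followed by the time change $\tau$ and the identification of the time-changed martingale as a standard \Bm, whence $W$ is recognised as the Skorokhod reflection of a Brownian path. The only differences are cosmetic --- you fold $B$ into $R=\abs{B}$ and apply Tanaka once at level $\frac{1}{2\gamma}$ where the paper applies it to $B$ at $\pm\frac{1}{2\gamma}$ and adds, and you invoke Dambis--Dubins--Schwarz where the paper uses Knight's theorem (which it needs anyway for the independence in Lemma~\ref{lemma:independence}) --- and the bookkeeping points you flag do check out: the right local time at $\frac{1}{2\gamma}$ cannot grow on an interval where $R\le\frac{1}{2\gamma}$, so $K$ is continuous across the jumps of $\tau$ and increases only on $\lbrace W=0\rbrace$.
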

	
	\begin{proof}[Proof of Proposition~\ref{prop:speed_scale}]
		Recall that $ B $ is standard \Bm started at $ r^{-1}(x) $, hence $ X_0 = x $ almost surely.
		Let $ \proc{\F^B} $ denote the natural filtration of $ \proc{B} $, and let $ \F_t = \F^B_{\tau(t)} $.
		Then $ \proc{\F} $ is a filtration, $ \proc{X} $ is $ \proc{\F} $ adapted and, for $ s, t \geq 0 $ and $ f : \ddot{\R} \to \R $ bounded and continuous,
		\begin{equation*}
		\E{f(X_{t+s})}{\F_t} = \E[r^{-1}(X_t)]{f(r(B_{\tau(s)}))}.
		\end{equation*}
		Hence $ (X_t, t \geq 0) $ is a Markov process with respect to $ (\F_t, t \geq 0) $.
		Now let $ \proc{\F^X} $ be the filtration generated by $ \proc{X} $.
		Since $ \F^X_t \subset \F_t $, $ \proc{X} $ is a Markov process with respect to $ (\F^X_t, t \geq 0) $.
		
		Suppose now that for any $ x \in \ddot{\R} $ and $ f \in \mathcal{D}^\gamma $,
		\begin{equation} \label{inf_generator}
		\lim_{t\downarrow 0} \dfrac{1}{t} \E[x]{f(X_t)-f(X_0)} = \dfrac{1}{2} \deriv[2]{f}{x}(x).
		\end{equation}
		(Recall that we assumed $ \sigma^2 = 1 $ for simplicity.)
		Then, by Proposition~4.1.7 in \cite{ethier_markov_1986}, \eqref{martingale_pb} is an $ \F^X $-martingale for all $ f \in \mathcal{D}^\gamma $ ($ X $ is progressive since it is right-continuous).
		It follows that $ \proc{X} $ is a solution to the martingale problem associated with $ L^\gamma $.
		
		Let us now show \eqref{inf_generator}.
		Since $ X $ behaves as standard \Bm until the first time it hits the origin, \eqref{inf_generator} clearly holds for all $ x \in \ddot{\R} \setminus \lbrace 0^+, 0^- \rbrace $.
		By symmetry, we can restrict the proof to $ x=0^+ $.
		For any $ t\geq 0 $,
		\begin{multline*}
		\E[0^+]{f(X_t)} = \E[0^+]{\left( f(X_t)-f(0^+) \right) \1{X_t\geq 0^+} + \left( f(X_t)-f(0^-) \right) \1{X_t \leq 0^-}} \\ + \E[0^+]{f(0^+) \1{X_t \geq 0^+} + f(0^-) \1{X_t\leq 0^-}}.
		\end{multline*}
		Subtracting $ f(0^+) $ on both sides we obtain
		\begin{multline} \label{generator_step_1}
		\E[0^+]{f(X_t) - f(0^+)} = \E[0^+]{\left( f(X_t)-f(0^+) \right) \1{X_t\geq 0^+} + \left( f(X_t)-f(0^-) \right) \1{X_t \leq 0^-}} \\ + \E[0^+]{\left( f(0^-)-f(0^+) \right) \1{X_t \leq 0^-}}.
		\end{multline}
		Since $ f $ is twice continuously differentiable on $ [0^+, +\infty) $, for any $ y $ in $ [0^+, +\infty) $, there exists $ h(y) \in [0^+, y] $ such that
		\begin{equation*}
		f(y) - f(0^+) = \deriv{f}{x}(0^+) y + \frac{1}{2} \deriv[2]{f}{x}(h(y)) y^2.
		\end{equation*}
		Replacing $ y $ by $ X_t $, we write, for any $ r > 0 $,
		\begin{multline*}
		(f(X_t) - f(0^+)) \1{X_t \geq 0^+} = \left( \deriv{f}{x}(0^+) X_t + \frac{1}{2} \deriv[2]{f}{x}(h(X_t)) X_t^2 \right) \1{0^+ \leq X_t \leq r} \\ + (f(X_t) - f(0^+)) \1{X_t > r}.
		\end{multline*}
		By the Markov inequality and Lemma~\ref{lemma:speed_scale_absolute_value},
		\begin{equation} \label{bound_exit}
		\P{\abs{X_t} > r} \leq 3 \frac{t^2}{r^4}.
		\end{equation}
		As a result, since $ f $ is bounded,
		\begin{equation}
		\E[0^+]{(f(X_t) - f(0^+)) \1{X_t > r}} \leq 6 \norm[\infty]{f} \frac{t^2}{r^4}.
		\end{equation}
		In addition,
		\begin{align} \label{error_1}
		\E[0^+]{\deriv{f}{x}(0^+) X_t \1{0^+ \leq X_t \leq r}} = \deriv{f}{x}(0^+) \E[0^+]{X_t \1{X_t \geq 0^+}} - \deriv{f}{x}(0^+) \E[0^+]{X_t \1{X_t > r}},
		\end{align}
		and by the Cauchy-Schwartz inequality, Lemma~\ref{lemma:speed_scale_absolute_value} and \eqref{bound_exit},
		\begin{align*}
		\abs{ \E[0^+]{X_t \1{X_t > r}} } &\leq \E[0^+]{X_t^2}^{1/2} \P[0^+]{X_t > r}^{1/2} \\
		& \leq t^{1/2} \sqrt{3} \frac{t}{r^2}. \numberthis \label{remainder_1}
		\end{align*}
		Moreover, since $ f'' $ is continuous on $ [0^+, +\infty) $, it is uniformly continuous on compact sets and there exists $ C_r > 0 $ such that
		\begin{equation*}
		\forall x, y \in [0^+, r], \qquad \abs{\deriv[2]{f}{x}(y) - \deriv[2]{f}{x}(x)} \leq C_r \abs{x-y}.
		\end{equation*}
		As a result,
		\begin{equation} \label{error_2}
		\abs{ \E[0^+]{\deriv[2]{f}{x}(h(X_t)) X_t^2 \1{0^+ \leq X_t \leq r}} - \E[0^+]{\deriv[2]{f}{x}(0^+) X_t^2 \1{0\leq X_t \leq r}} } \leq C_r \E[0^+]{\abs{X_t}^3},
		\end{equation}
		and by Lemma~\ref{lemma:speed_scale_absolute_value}, $ \E[0^+]{\abs{X_t}^3} = \bigO{t^{3/2}} $.
		Proceeding as for \eqref{remainder_1}, we also have
		\begin{equation} \label{error_3}
		\E[0^+]{\frac{1}{2} \deriv[2]{f}{x}(0^+) X_t^2 \1{0^+ \leq X_t \leq r}} = \frac{1}{2} \deriv[2]{f}{x}(0^+) \E[0^+]{X_t^2 \1{X_t \geq 0^+}} + \bigO{t^{3/2}}.
		\end{equation}
		Putting together \eqref{error_1}, \eqref{error_2} and \eqref{error_3}, we obtain
		\begin{multline*}
		\E[0^+]{(f(X_t) - f(0^+)) \1{X_t \geq 0^+}} = \deriv{f}{x}(0^+) \E[0^+]{X_t \1{X_t \geq 0^+}} \\ + \frac{1}{2} \deriv[2]{f}{x}(0^+) \E[0^+]{X_t^2 \1{X_t \geq 0^+}} + \littleO{t}.
		\end{multline*}
		Likewise, we have
		\begin{multline*}
		\E[0^+]{(f(X_t)-f(0^-)) \1{X_t \leq 0^-}} = \deriv{f}{x}(0^-) \E[0^+]{X_t \1{X_t \leq 0^-}} \\ + \frac{1}{2} \deriv[2]{f}{x}(0^-) \E[0^+]{X_t^2 \1{X_t \leq 0^-}} + \littleO{t}.
		\end{multline*}
		Plugging these two equations in \eqref{generator_step_1} and using the fact that $ f'(0^-) = f'(0^+) $, we obtain
		\begin{multline} \label{speed_scale_taylor}
		\E[0^+]{f(X_t)-f(X_0)} = \deriv{f}{x}(0^\pm) \E[0^+]{X_t} + \dfrac{1}{2} \deriv[2]{f}{x}(0^+) \E[0^+]{X_t^2} \\ + \dfrac{1}{2} \left( \deriv[2]{f}{x}(0^-) - \deriv[2]{f}{x}(0^+) \right) \E[0^+]{X_t^2 \1{X_t \leq 0^-}} \\ + \left( f(0^-) - f(0^+) \right) \P[0^+]{X_t \leq 0^-} + \littleO{t}.
		\end{multline}
		Moreover, by the construction of $ X_t $,
		\begin{align*}
		\E[0^+]{X_t} &= \E[\frac{1}{2\gamma}]{r(B_{\tau(t)})} \\
		&= \E[\frac{1}{2\gamma}]{\left( B_{\tau(t)} - \dfrac{1}{2\gamma} \right) \1{B_{\tau(t)} \geq \frac{1}{2\gamma}} + \left( B_{\tau(t)} + \dfrac{1}{2\gamma} \right) \1{B_{\tau(t)} \leq - \frac{1}{2\gamma}}} \\
		&= \E[\frac{1}{2\gamma}]{B_{\tau(t)}} + \dfrac{1}{2\gamma} \E[\frac{1}{2\gamma}]{\1{B_{\tau(t)}\leq - \frac{1}{2\gamma}} - \1{B_{\tau(t)}\geq \frac{1}{2\gamma}}}. \numberthis \label{expectation_Xt}
		\end{align*}
		Note that $ \tau(t) $ is an $ \F^B_t $-stopping time.
		Furthermore, for any given $ t\geq 0 $, the martingale $ ( B_{s \wedge \tau(t)} , s \geq 0 ) $ is uniformly integrable.
		To see this, write
		\begin{equation*}
		\sup_{s \geq 0} \abs{B_{s \wedge \tau(t)}} \leq \frac{1}{2\gamma} + \sup_{0 \leq s \leq t} W_s,
		\end{equation*}
		and note that the \rhs is integrable by Lemma~\ref{lemma:speed_scale_absolute_value} and Doob's maximal inequality.
		Hence, by the Optional Stopping Theorem, $ \E[\frac{1}{2\gamma}]{B_{\tau(t)}} = \frac{1}{2\gamma} $.
		As a result, returning to \eqref{expectation_Xt},
		\begin{equation*}
		\E[0^+]{X_t} = \frac{1}{\gamma} \P[0^+]{X_t \leq 0^-}.
		\end{equation*}
		Since $ f \in \mathcal{D}^\gamma $, the first term in \eqref{speed_scale_taylor} cancels with the last one.
		By Lemma~\ref{lemma:speed_scale_absolute_value}, $ \E[0^+]{X_t^2} = t $.
		Also note that by the Cauchy-Schwarz inequality,
		\begin{equation*}
		\E[0^+]{X_t^2 \1{X_t \leq 0^-}} \leq \sqrt{3} \, t \, \P[0^+]{X_t \leq 0^-}^{1/2}.
		\end{equation*}
		(We have used Lemma~\ref{lemma:speed_scale_absolute_value} to compute the fourth moment of $ X_t $.)
		Furthermore, $$ \P[0^+]{X_t \leq 0^-} = \P[\frac{1}{2\gamma}]{B_{\tau(t)} \leq -\frac{1}{2\gamma}} \cvgas{t}{0} 0. $$
		
		Coming back to \eqref{speed_scale_taylor}, dividing both sides by $ t $ and letting $ t \downarrow 0 $, we obtain
		\begin{equation*}
		\lim_{t \downarrow 0} \frac{1}{t} \E[0^+]{f(X_t) - f(X_0)} = \frac{1}{2} \deriv[2]{f}{x}(0^+).
		\end{equation*}
		The proof of Proposition~\ref{prop:speed_scale} is now complete.
	\end{proof}
	
	\subsection{Construction involving the local time at the origin} \label{subsec:local_time}
	
	Let $ \proc{B} $ be standard \Bm and let $ X_t = r(B_{\tau(t)}) $ be \prBm constructed as before.
	Set $W_t = \abs{X_t}$ and
	\begin{align*}
	T_0 = 0, && T_{i+1} = \inf \lbrace t > T_i : X_{T_i} X_t < 0 \rbrace, \quad i \geq 0.
	\end{align*}
	For $ i \geq 1 $ set
	\begin{equation*}
	E_i = L^0_{T_i}(X) - L^0_{T_{i-1}}(X)
	\end{equation*}
	and for $ t \geq 0 $,
	\begin{equation*}
	N(t) = \max \left\lbrace n \in \N : \sum_{i=0}^{n} E_i \leq t \right\rbrace.
	\end{equation*}
	Then, for all $ t \geq 0 $,
	\begin{equation*}
	X_t = \sign(X_0) (-1)^{N(L^0_t(W))} W_t.
	\end{equation*}
	We know from Lemma~\ref{lemma:speed_scale_absolute_value} that $\proc{W}$ is distributed as reflected \Bm.
	Proposition~\ref{prop:local_time_construction} will be proven if we show that $ (N(t), t \geq 0) $ is a Poisson process with rate $ \gamma $ and that it is independent of $ \proc{W} $. 
	
	The fact that $ N $ and $ W $ are independent might seem implausible at first sight as they are both constructed from $ \proc{B} $.
	However, the $ E_i $ (and hence $ N $) only depend on the amount of local time that $ B $ accumulates at $ \pm \frac{1}{2\gamma} $ between successive crossings of $ [-\frac{1}{2\gamma}, \frac{1}{2\gamma}] $, and those crossing times cannot be determined by observing $ \proc{W} $.
	To prove this, we construct two independent processes in such a way that $ \proc{W} $ is a function of the former and $ (N(t), t \geq 0) $ is a function of the latter.
	Set
	\begin{equation*}
	\theta(t) = \inf \left \lbrace \theta > 0 : \int_{0}^{\theta} \1{ \abs{B_s} \leq \frac{1}{2\gamma} } ds > t \right\rbrace.
	\end{equation*}
	We prove the following in Subsection~\ref{subsec:speed_scale_absolute_value}.
	
	\begin{lemma} \label{lemma:independence}
		The processes $ (\abs{B_{\tau(t)}}, t \geq 0) $ and $ (B_{\theta(t)}, t\geq 0) $ are independent.
	\end{lemma}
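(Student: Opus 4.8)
The plan is to fold the problem onto the half-line, turning it into a statement about a \emph{single} reflected Brownian motion and a single level, and then to separate the relevant excursions by It\^o's excursion theory. Write $c=\frac{1}{2\gamma}$ and $R_t=|B_t|$, which is reflected Brownian motion on $[0,\infty)$ started from $|r^{-1}(x)|\ge c$. Since $\tau$ and $\theta$ depend on $B$ only through $|B|=R$, we have $|B_{\tau(t)}|=R_{\tau(t)}$; call this process $R^{>c}$ (it is $R$ with the time spent in $\{R>c\}$ kept and the rest deleted), and set similarly $R^{\le c}_t:=R_{\theta(t)}$. Note that $R^{>c}$ is a measurable functional of $R$ alone, while $|B_{\theta(t)}|=R^{\le c}_t$ and $B_{\theta(t)}$ itself differs from $R^{\le c}_t$ only by a sign that stays constant along each excursion of $R^{\le c}$ away from $0$. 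The lemma then follows from two facts: \emph{(i)} $R^{>c}$ and $R^{\le c}$ are independent; and \emph{(ii)} $B_\theta$ can be written as a measurable function of $R^{\le c}$ together with a sequence of i.i.d.\ fair coins that is independent of $R$.

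For \emph{(i)} I would first apply the strong Markov property at $T_c:=\inf\{t:R_t=c\}$: the segment $[0,T_c)$ lies entirely in $\{R>c\}$, is a functional of $B|_{[0,T_c]}$, and is independent of the post-$T_c$ path, which from $T_c$ on is reflected Brownian motion started at $c$. For that process, It\^o's excursion theory at the regular point $c$ gives a Poisson point process of excursions away from $c$ with intensity $d\ell\otimes n$, and $n=n^{\uparrow}+n^{\downarrow}$ splits according to whether an excursion lies above or below $c$ --- supports disjoint in path space --- so by the restriction theorem the ``up'' and ``down'' parts of the excursion process are independent. Now $R^{>c}$ is recovered by concatenating, in order of local time, the initial segment and the up-excursions (a functional of $B|_{[0,T_c]}$ and the up-part), whereas $R^{\le c}$ is the concatenation of the down-excursions alone (the time in $\{R>c\}$ being deleted, and the down-excursions chaining $c\to c$); independence of these three ingredients yields \emph{(i)}.

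For \emph{(ii)} I would invoke the It\^o--McKean ``sign'' construction of Brownian motion from its absolute value: apart from the deterministic sign of the initial meander, the signs of $B$ along the successive excursions of $R=|B|$ away from $0$ are i.i.d.\ uniform on $\{\pm1\}$ and independent of $R$. Deleting the portions of $R$ above $c$ creates no new zero, so each excursion of $R$ away from $0$ remains a single excursion of $R^{\le c}$ away from $0$, carrying the same sign; hence $B_{\theta(t)}=\eta_{k(t)}R^{\le c}_t$, where $k(t)$ is the index of the $R^{\le c}$-excursion containing $t$ --- a functional of $R^{\le c}$ --- and $(\eta_k)$ is that sign sequence. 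Putting the pieces together, the initial segment, the up-part, the down-part, and the sign sequence are mutually independent (the signs are independent of $R$, hence of both excursion parts, which are independent of each other); $R^{>c}=|B_\tau|$ is built from the first two and $B_\theta$ from the last two, so they are independent.

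The conceptual core is short; the main obstacles are bookkeeping. One must check that the $\theta$-time-change really puts the excursions of $R$ away from $0$ in bijection with those of $R^{\le c}$ (a time-change argument about which zeros survive), isolate the initial segment before $T_c$ cleanly via the strong Markov property, and verify that $B_\theta$ is genuinely measurable with respect to $\sigma(R^{\le c})$ and the coins rather than the full path of $B$. Folding to $R$ is exactly what lets us run excursion theory at a \emph{single} point instead of wrestling with the two-point set $\{-c,c\}$ and the ``which side of the barrier'' dependence between successive excursions; the price is the extra signs, which are harmless precisely because they are independent of $R$. (The same excursion analysis also re-proves Lemma~\ref{lemma:speed_scale_absolute_value}, since the concatenation of up-excursions with the initial segment is reflected Brownian motion on $[c,\infty)$.)
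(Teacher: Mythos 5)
Your proof is correct in substance, but it takes a genuinely different route from the paper's. The paper manufactures the independence directly with stochastic calculus: it introduces the orthogonal martingales $I^1(t)=\int_0^t \1{B_s>\frac{1}{2\gamma}}\,dB_s-\int_0^t\1{B_s<-\frac{1}{2\gamma}}\,dB_s$ and $I^2(t)=\int_0^t\1{\abs{B_s}\le\frac{1}{2\gamma}}\,dB_s$, notes $\qvar{I^1,I^2}=0$, and applies Knight's theorem to obtain two \emph{independent} Brownian motions $\tilde B^1_t=W_0+I^1(\tau(t))$ and $\tilde B^2_t=B_{\theta(0)}+I^2(\theta(t))$; the uniqueness statements in Lemma~\ref{lemma:skorokhod} and Lemma~\ref{lemma:two_sided_regulator} then exhibit $\abs{B_{\tau(\cdot)}}=W+\frac{1}{2\gamma}$ as a deterministic functional of $\tilde B^1$ (Skorokhod reflection) and $B_{\theta(\cdot)}$ as a deterministic functional of $\tilde B^2$ (two-sided regulator on $[-\frac{1}{2\gamma},\frac{1}{2\gamma}]$), whence independence. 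You instead fold onto $\abs{B}$ and run It\^o excursion theory at the single level $\frac{1}{2\gamma}$ (restriction theorem for the up/down split) together with the It\^o--McKean sign randomization at $0$. Both work: the paper's route is shorter, avoids excursion theory entirely, and reuses machinery already needed for Lemma~\ref{lemma:speed_scale_absolute_value}; yours gives a more vivid pathwise picture of which pieces of trajectory feed which process and, as you note, re-derives Lemma~\ref{lemma:speed_scale_absolute_value} for free, at the cost of three nontrivial inputs (excursion theory at an interior regular point, the Poisson restriction theorem, the sign construction) and some real bookkeeping. One point you should make explicit rather than defer: the excursions of $\abs{B}$ away from $0$ are not discretely ordered in time, so the ``sign sequence'' must be indexed by local time at $0$ (as marks on the excursion point process), and your $k(t)$ should be defined by matching the local time of $R^{\le c}$ at $0$ at time $t$ with that of $R$ at time $\theta(t)$ (these agree since no local time at $0$ accrues while $R>\frac{1}{2\gamma}$); with that fix the measurability of $B_\theta$ with respect to the down-excursions and the signs is sound, and the mutual independence of initial segment, up-part, down-part and signs gives the claim.
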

		
	\begin{proof}[Proof of Proposition~\ref{prop:local_time_construction}]
		Note that the left (resp. right) local time accumulated by $X$ at the origin up to time $t$ is the local time accumulated by $B$ at $-\frac{1}{2\gamma}$ (resp. $\frac{1}{2\gamma}$) up to time $\tau(t)$.
		Indeed, by the Tanaka formula \cite[Theorem~VI.1.2]{revuz_continuous_2013}, letting $x^+ = \max(x,0)$,
		\begin{equation*}
		\frac{1}{2} L^{0^+}_t(X) = X_t^+ - X_0^+ - \int_{0}^{t} \1{X_s>0} d X_s
		\end{equation*}
		and
		\begin{equation*}
		\frac{1}{2} L^{1/2\gamma}_{\tau(t)}(B) = (B_{\tau(t)} - \frac{1}{2\gamma})^+ - (B_0 - \frac{1}{2 \gamma})^+ - \int_{0}^{\tau(t)} \1{B_s>\frac{1}{2\gamma}} d B_s.
		\end{equation*}
		(For \Bm, considering the right, the left or the symmetric local time makes no difference.)
		By the construction of $X$, $X_t^+ = (B_{\tau(t)} - \frac{1}{2\gamma})^+$ and, since $\1{B_s>\frac{1}{2\gamma}} = 0$ when $s \in (\tau(t^-), \tau(t))$,
		\begin{equation*}
		\int_{0}^{\tau(t)} \1{B_s>\frac{1}{2\gamma}} d B_s = \int_0^t \1{X_s>0} dX_s.
		\end{equation*}
		As a result, 
		\begin{equation} \label{local_time_XB}
		L^{0^+}_t(X) = L^{1/2\gamma}_{\tau(t)}(B)
		\end{equation}
		and likewise, $L^{0^-}_t(X) = L^{-1/2\gamma}_{\tau(t)}(B)$.
		
		For $ a \in \R $, set $ \mathds{T}_a = \inf\lbrace t > 0 : B_t = a \rbrace $.
		Assuming \Wlog that $X_0 > 0$, $ \tau(T_1) = \mathds{T}_{-1/2\gamma} $.
		Then,
		\begin{align*}
		E_1 = L^0_{T_1}(X) &= \frac{1}{2} \left( L^{0^+}_{T_1}(X) + L^{0^-}_{T_1}(X) \right) \\
		&= \frac{1}{2} L^{1/2\gamma}_{\mathds{T}_{-1/2\gamma}}(B).
		\end{align*}
		By the Ray-Knight theorem \cite[Theorem~6.4.7]{shreve_brownian_1991}, $$L^{1/2\gamma}_{\mathds{T}_{-1/2\gamma}}(B)$$ is an exponential \rv with parameter $\frac{\gamma}{2}$.
		Hence $E_1$ is exponential with parameter $\gamma$.
		
		Further, the strong Markov property of $\proc{B}$ and its symmetry imply that the $E_i$ are \iid.
		As a result $ (N(t), t \geq 0) $ is a Poisson process with rate $ \gamma $.
		It remains to show that it is independent of $ \proc{W} $.
		
		Define
		\begin{align*}
		S_0 = 0, && S_i = \inf \left\lbrace t > S_{i-1} : B_{\theta(t)} = \frac{(-1)^i}{2\gamma} \right\rbrace.
		\end{align*}
		By the same argument as above,
		\begin{equation*}
		L^0_{T_i}(W) = L^{1/2\gamma}_{S_i}(B_{\theta}) + L^{-1/2\gamma}_{S_i}(B_{\theta}).
		\end{equation*}
		As a result, the $ E_i $, and $ (N(t), t\geq 0) $, are measurable \wrt the sigma field generated by $ (B_\theta(t), t\geq 0) $.
		Since $ W_t = \abs{B_{\tau(t)}} - \frac{1}{2\gamma} $, Lemma~\ref{lemma:independence} implies the independence of $ N $ and $ W $.
		This concludes the proof of Proposition~\ref{prop:local_time_construction}.
	\end{proof}
	
	\subsection{The absolute value of \prBm} \label{subsec:speed_scale_absolute_value}
	
	Let us start by recalling the following lemma, due to Skorokhod \cite{skorokhod_stochastic_1961} (also Lemma~3.6.14 in \cite{shreve_brownian_1991}).
	
	\begin{lemma}[\cite{skorokhod_stochastic_1961}] \label{lemma:skorokhod}
		Let $ f : \R_+ \to \R $ be a continuous function with $ f(0) \geq 0 $.
		There exists a unique continuous function $ l : \R_+ \to \R $ such that
		\begin{enumerate}[i)]
			\item $ X(t) := l(t) + f(t) $ is non negative for all $ t \geq 0 $,
			\item $ l(0) = 0 $ and $ t \mapsto l(t) $ is non decreasing,
			\item $ \int_{0}^{\infty} \1{X(t) > 0} dl(t) = 0 $.
		\end{enumerate}
		The function $ l $ is then called the solution of the Skorokhod problem for $ f $ and it is given by
		\begin{equation*}
			l(t) = \inf_{0\leq s \leq t} (f(s))^-.
		\end{equation*}
	\end{lemma}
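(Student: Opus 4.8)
The plan is to take the explicit candidate from the statement, $l(t)=\sup_{0\le s\le t}(f(s))^-$ where $g^-:=\max(-g,0)\ge 0$, verify directly that it satisfies i)--iii), and then prove uniqueness by a comparison argument. First I would dispose of the formal parts. Property ii) holds because a running supremum is non-decreasing and $l(0)=(f(0))^-=0$ (here $f(0)\ge 0$); property i) holds because $l(t)\ge (f(t))^-\ge -f(t)$, so $X(t):=l(t)+f(t)\ge 0$; and $l$ is continuous because $f$ is continuous and a running supremum over $[0,t]$ of a continuous function varies continuously with $t$ (uniform continuity on compacts). In particular $dl$ is a well-defined non-negative Stieltjes measure with $dl((a,b])=l(b)-l(a)$.

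The real work is property iii), namely that $dl$ charges only the zero set of $X$. Since $X$ is continuous, $A:=\{t\ge 0:X(t)>0\}$ is open, hence a countable disjoint union of intervals, and it is enough to show that $l$ is constant on each of them. The key observation I would use is that $l(t_2)=\max\{l(t_1),\ \sup_{t_1\le u\le t_2}(f(u))^-\}$, so whenever $l(t_2)>l(t_1)$ the increment is realised by the supremum over $[t_1,t_2]$, which is attained at some $u^\ast\in[t_1,t_2]$ by compactness. Then $(f(u^\ast))^-=l(t_2)\ge l(u^\ast)\ge (f(u^\ast))^-$, so $l(u^\ast)=(f(u^\ast))^-$; the case $f(u^\ast)>0$ is excluded because it would give $l(t_2)=0\le l(t_1)$, so $f(u^\ast)\le 0$ and $X(u^\ast)=l(u^\ast)+f(u^\ast)=-f(u^\ast)+f(u^\ast)=0$. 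If $t_1,t_2$ lie in a component of $A$, this contradicts $u^\ast\in A$; hence $l$ is constant on each component and $\int_0^\infty \1{X(t)>0}\,dl(t)=0$. This is the step I expect to be the main obstacle: everything else is essentially formal, but here one has to pin down precisely where the monotone function $l$ is allowed to grow.

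For uniqueness, given two solutions $l_1,l_2$ with reflected functions $X_1,X_2$, I would assume $l_1(t_0)>l_2(t_0)$ for some $t_0$ and set $s:=\sup\{t\le t_0:l_1(t)\le l_2(t)\}$, a set containing $0$. Continuity forces $l_1(s)=l_2(s)$ with $s<t_0$ and $l_1>l_2$ on $(s,t_0]$, so there $X_1=l_1+f>l_2+f=X_2\ge 0$, i.e. $X_1>0$ on $(s,t_0]$; property iii) for $l_1$ then gives $l_1(t_0)-l_1(s)=\int_{(s,t_0]}\1{X_1>0}\,dl_1=0$, whence $l_1(t_0)=l_1(s)=l_2(s)\le l_2(t_0)$, a contradiction. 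Swapping the roles of $l_1$ and $l_2$ yields $l_1=l_2$, completing the proof.
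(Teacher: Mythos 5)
Your proof is correct. Note, however, that the paper does not prove this lemma at all: it is recalled as a classical result with a citation to Skorokhod's 1961 paper and to Lemma~3.6.14 of Karatzas and Shreve, so there is no in-paper argument to compare against. Your argument is essentially the standard textbook proof: direct verification of i)--iii) for the explicit running-supremum candidate (with the key step being that any point realising an increment of $l$ is a zero of $X$, so $dl$ is supported on $\{X=0\}$), followed by the usual comparison argument for uniqueness using property iii) on the interval where one putative solution strictly dominates the other. Both halves are complete and correct as written. One small point worth flagging: you replaced the formula $l(t)=\inf_{0\leq s\leq t}(f(s))^-$ from the statement by $\sup_{0\leq s\leq t}(f(s))^-$; the statement's expression only makes sense if read as $\bigl(\inf_{0\leq s\leq t}f(s)\bigr)^-$ with $x^-=\max(-x,0)$, in which case it coincides with your formula, so your reading is the right one, but the discrepancy deserves an explicit remark rather than a silent substitution.
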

	
	The following generalisation can be found in \citep[Proposition~2.4.6]{harrison_brownian_1985}.
	
	\begin{lemma}[\cite{harrison_brownian_1985}] \label{lemma:two_sided_regulator}
		Fix $ a < b \in \R $ and let $ f : \R_+ \to \R $ be a continuous function such that $ f(0) \in [a,b] $.
		There exists a unique pair of continuous functions $ (l, u) $ from $ \R_+ $ to $ \R $ such that
		\begin{enumerate}[i)]
			\item $ X(t) := f(t) + l(t) - u(t) \in [a,b] $ for all $ t \geq 0 $,
			\item $ l(0) = u(0) = 0 $ and $ l $ and $ u $ are non decreasing,
			\item $ \int_{0}^{\infty} \1{X(t) > a} dl(t) = \int_{0}^{\infty} \1{X(t) < b} du(t) = 0 $.
		\end{enumerate}
		The pair $ (l, u) $ is called the two-sided regulator of $ f $.
	\end{lemma}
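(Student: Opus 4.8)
The plan is to establish uniqueness by a pathwise comparison argument and then existence by an explicit ``phase by phase'' construction, the one genuinely delicate point being an a priori bound that keeps the two regulators finite on compact time intervals.

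\emph{Uniqueness.} Suppose $(l_1,u_1)$ and $(l_2,u_2)$ both satisfy (i)--(iii) and set $X_i=f+l_i-u_i$ and $D=X_1-X_2$. Then $D$ is continuous, of finite variation, and $D(0)=0$, so (since $D$ is continuous of finite variation) $D(t)^2=2\int_0^t D\,dD=2\bigl[\int_0^t D\,dl_1-\int_0^t D\,dl_2-\int_0^t D\,du_1+\int_0^t D\,du_2\bigr]$. Property (iii) says that $dl_i$ is carried by $\{X_i=a\}$ and $du_i$ by $\{X_i=b\}$; combined with $X_1,X_2\in[a,b]$, each of the four terms in this sum is non positive --- for instance on $\{X_1=a\}$ one has $X_1=a\le X_2$, hence $D\le 0$ there and $\int_0^t D\,dl_1\le 0$, and the other three are handled the same way. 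Thus $D(t)^2\le 0$, so $D\equiv 0$ and $X_1=X_2=:X$. Then $\phi:=l_1-l_2=u_1-u_2$ is continuous, of finite variation, with $\phi(0)=0$, and its Stieltjes measure is carried both by $\{X=a\}$ and by $\{X=b\}$; as these sets are disjoint, $d\phi=0$, whence $l_1=l_2$ and $u_1=u_2$.

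\emph{Existence.} On a fixed interval $[0,T]$ I would construct $X$, together with $l$ and $u$, by alternating one ``free'' phase with ``held'' phases. Set $X=f$, $l=u=0$ up to the first time $\sigma$ at which $f$ leaves $[a,b]$, and suppose it leaves through $b$ (the other case is symmetric). On the next phase keep $X$ below $b$ by setting $X(t)=b-\bigl(\max_{[\sigma,t]}f-f(t)\bigr)$, so that $u$ increases by $\max_{[\sigma,t]}f-b\ge 0$ and does so only while $X=b$, and end this phase at the first time $X$ reaches $a$; then repeat with the roles of $a$ and $b$ (and of $l$ and $u$) exchanged, and so on. Glueing these pieces, which agree at the phase boundaries, yields continuous nondecreasing $l,u$ with $l(0)=u(0)=0$, $X=f+l-u\in[a,b]$, and $l$ (resp. $u$) increasing only on $\{X=a\}$ (resp. $\{X=b\}$); that is, (i)--(iii) hold.

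\emph{The main obstacle} is to rule out an accumulation of phases in $[0,T]$ and to bound $l(T)+u(T)$; note that the naive alternative of iterating one sided reflections at $a$ and at $b$ actually \emph{diverges}, so continuity of $f$ has to be used. A held phase running from $\sigma$ to $\rho$ terminates only once $X$, and hence $f$, has moved by $b-a$: from $X(\rho)=a$ one gets $\max_{[\sigma,\rho]}f-f(\rho)=b-a$, so $|f(s^*)-f(\rho)|=b-a$ where $s^*\in[\sigma,\rho]$ attains the maximum; by uniform continuity of $f$ on $[0,T]$ this forces $\rho-\sigma\ge\delta_T$ for some $\delta_T>0$, so only finitely many phases, say $N_T$, occur in $[0,T]$. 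Within each phase $l$ or $u$ increases by at most $\mathrm{osc}_{[0,T]}f<\infty$, hence $l(T)+u(T)\le N_T\,\mathrm{osc}_{[0,T]}f<\infty$. The construction is thus well defined on all of $\R_+$ --- it is causal, so its restrictions to $[0,T]$ and $[0,T']$ are consistent --- and the uniqueness part identifies it as the two sided regulator. An alternative route to existence is penalisation, approximating by the solution of $dX_n=df-n(X_n-b)^+dt+n(a-X_n)^+dt$ and letting $n\to\infty$, but the compactness there rests on essentially the same finiteness estimate.
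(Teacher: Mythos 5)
The paper offers no proof of this statement at all: it is quoted directly from Harrison \cite{harrison_brownian_1985} (Proposition~2.4.6) and used as a black box in the proof of Lemma~\ref{lemma:independence}, so any complete argument is necessarily a different route from the paper's. Your proof is correct and self-contained. The uniqueness half is the classical one --- the identity $D(t)^2=2\int_0^t D\,dD$ for the continuous finite-variation difference, each of the four Stieltjes integrals being non-positive by the support conditions in (iii), followed by the observation that $l_1-l_2=u_1-u_2$ has its Stieltjes measure carried by the disjoint sets $\lbrace X=a\rbrace$ and $\lbrace X=b\rbrace$. The existence half by gluing alternating one-sided Skorokhod reflections is a legitimate alternative to Harrison's construction, and you correctly isolate the one point that makes it work: a completed held phase forces $f$ to oscillate by $b-a$, so uniform continuity on $[0,T]$ gives a positive lower bound on phase lengths and hence finitely many phases and a finite bound on $l(T)+u(T)$. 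Two cosmetic remarks: the increment of $u$ over a held-at-$b$ phase starting at $\sigma$ is $\max_{[\sigma,t]}f-f(\sigma)$ rather than $\max_{[\sigma,t]}f-b$ once $l-u$ is no longer zero (your displayed formula $X(t)=b-(\max_{[\sigma,t]}f-f(t))$ is nonetheless valid in every such phase, since $f+l-u=b$ at its start, so your termination criterion and the bound by the oscillation of $f$ survive unchanged); and the parenthetical claim that iterating the two one-sided maps over all of $[0,T]$ ``diverges'' is not right --- that iteration is known to converge and is itself a standard construction --- but nothing in your argument depends on it.
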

	
	For $ t \geq 0 $, set
	\begin{align*}
	I^1(t) &= \int_{0}^{t} \1{B_s > \frac{1}{2\gamma}} d B_s - \int_{0}^{t} \1{B_s < - \frac{1}{2\gamma}} dB_s, \\
	I^2(t) &= \int_{0}^{t} \1{\abs{B_s} \leq \frac{1}{2\gamma}} d B_s.
	\end{align*}
	Both $ I^1 $ and $ I^2 $ are continuous $ \F^B_t $ martingales with
	\begin{align*}
	&\qvar{I^1} = \int_{0}^{t} \1{ \abs{B_s} > \frac{1}{2\gamma} } ds \\
	&\qvar{I^2} = \int_{0}^{t} \1{ \abs{B_s} \leq \frac{1}{2\gamma} } ds \\
	&\qvar{I^1, I^2} = 0.
	\end{align*}
	By F. B. Knight's theorem \cite{knight_reduction_1971} (also Theorem~3.4.13 in \cite{shreve_brownian_1991}), the processes
	\begin{align*}
	\tilde{B}^1_t = W_0 + I^1(\tau(t)), && \tilde{B}^2_t = B_{\theta(0)} + I^2(\theta(t)),
	\end{align*}
	are independent standard \Bm*.
	
	\begin{proof}[Proof of Lemma~\ref{lemma:speed_scale_absolute_value}]
		By the Tanaka formula \citep[Theorem~VI.1.2]{revuz_continuous_2013},
		\begin{align} \label{tanaka_1}
		& \frac{1}{2} L^{1/2\gamma}_t(B) = \left( B_t - \tfrac{1}{2\gamma} \right)^+ - \left( B_0 - \tfrac{1}{2\gamma} \right)^+ - \int_{0}^{t} \1{B_s > \frac{1}{2\gamma}} dB_s, \\ \label{tanaka_2}
		& \frac{1}{2} L^{-1/2\gamma}_t(B) = \left( B_t + \tfrac{1}{2\gamma} \right)^- - \left( B_0 + \tfrac{1}{2\gamma} \right)^- + \int_{0}^{t} \1{B_s < - \frac{1}{2\gamma}} dB_s.
		\end{align}
		On the other hand, from the construction of $X_t$,
		\begin{equation*}
		W_t = \abs{X_t} = (B_{\tau(t)}-\frac{1}{2\gamma})^+ + (B_{\tau(t)}+\frac{1}{2\gamma})^-
		\end{equation*}
		and from \eqref{local_time_XB},
		\begin{equation*}
		L^0_t(W) = L^0_t(X) = \frac{1}{2}\left( L^{1/2\gamma}_{\tau(t)}(B) + L^{-1/2\gamma}_{\tau(t)}(B) \right).
		\end{equation*}
		Adding \eqref{tanaka_1} and \eqref{tanaka_2} and replacing $ t $ by $ \tau(t) $, we obtain
		\begin{equation*}
		\tilde{B}^1_t = W_t - L^0_t(W).
		\end{equation*}
		Since $\tilde{B}^1$ is standard \Bm, $W$ is reflected \Bm \cite[VI.2]{revuz_continuous_2013}.
	\end{proof}
	
	\begin{proof}[Proof of Lemma~\ref{lemma:independence}]
		Since $ \tilde{B}^1_t = W_t - L^0_t(W) $, the function $ t \mapsto L^0_t(W) $ is a solution of the Skorokhod problem for $ t \mapsto \tilde{B}^1_t $, and by Lemma~\ref{lemma:skorokhod},
		\begin{equation*}
		W_t = \tilde{B}^1_t + \inf_{s \leq t} (\tilde{B}^1_s)^-.
		\end{equation*}
		On the other hand, $ B_{\theta(t)} $ is a function of $ ( \tilde{B}^2_t, t\geq 0) $.
		To see this, note that since $ B_{\theta(t)} \in [-1/2\gamma, 1/2\gamma] $,
		\begin{equation*}
		B_{\theta(t)} = \left( B_{\theta(t)} + \frac{1}{2\gamma} \right)^+ - \left( B_{\theta(t)} - \frac{1}{2\gamma} \right)^+ - \frac{1}{2\gamma}.
		\end{equation*}
		By the Tanaka formula,
		\begin{align*}
		\left( B_{t} + \frac{1}{2\gamma} \right)^+ &= \left( B_0 + \frac{1}{2\gamma} \right)^+ + \int_{0}^{t} \1{B_s > - \frac{1}{2\gamma}} d B_s + L^{-\frac{1}{2\gamma}}_t(B), \\
		\left( B_t - \frac{1}{2\gamma} \right)^+ &= \left( B_0 - \frac{1}{2\gamma} \right)^+ + \int_{0}^{t} \1{B_s > \frac{1}{2\gamma}} d B_s + L^{\frac{1}{2\gamma}}_t(B).
		\end{align*}
		Subtracting these equations with $ t $ replaced by $ \theta(t) $, and noting that $ \1{B_s \geq - 1/2\gamma} - \1{B_s > 1/2\gamma} = \1{\abs{B_s} \leq 1/2\gamma} $, we obtain
		\begin{equation*}
		B_{\theta(t)} = \tilde{B}^2_t + L^{-\frac{1}{2\gamma}}_{\theta(t)}(B) - L^{\frac{1}{2\gamma}}_{\theta(t)}(B).
		\end{equation*}
		From this equation, we see that $ ( L^{-\frac{1}{2\gamma}}_{\theta(\cdot)}(B), L^{\frac{1}{2\gamma}}_{\theta(\cdot)}(B) ) $ is the two-sided regulator of $ \tilde{B}^2 $ with reflection at $ \pm 1/2\gamma $.
		By Lemma~\ref{lemma:two_sided_regulator}, $ ( B_{\theta(t)}, t \geq 0 ) $ is then uniquely determined by $ ( \tilde{B}^2_t, t \geq 0) $.
		
		Since $ \abs{B_{\tau(t)}} = W_t + \frac{1}{2\gamma} $ is a function of $ \tilde{B}^1 $, $ B_{\theta(t)} $ is a function of $ \tilde{B}^2 $, and $ \tilde{B}^1 $ is independent of $ \tilde{B}^2 $, $ \left( \abs{B_{\tau(t)}}, t\geq 0 \right) $ and $ \left( B_{\theta(t)}, t \geq 0 \right) $ are independent.
	\end{proof}
	
	\subsection{Transition density of \prBm} \label{subsec:transition_density}
	
	\begin{proof}[Proof of Corollary~\ref{cor:transition_density}]
		Recall that $ X_t $ was defined as
		\begin{equation*}
		X_t = \sign(x) (-1)^{N(L^0_t(W))} W_t,
		\end{equation*}
		where $ W $ is reflected \Bm started from $ \abs{x} $ and $ \proc{N} $ is an independent Poisson process with rate $ \gamma $.
		Hence, summing over all possible values of $ L^0_t(W) $,
		\begin{equation*}
		\P[x]{X_t \in dy} = \int_{0}^{\infty} \P{N(l) \equiv \sign(x)-\sign(y) \: (\mathrm{mod}\, 2)} \P[\abs{x}]{W_t \in d\abs{y}, L^0_t(W) \in dl}.
		\end{equation*}
		Since $ N(l) $ is a Poisson \rv with parameter $ \gamma l $,
		\begin{align*}
		\P{N(l) \equiv 0 \:(\mathrm{mod}\, 2)} = \frac{1+e^{-2\gamma l}}{2}, && \P{N(l) \equiv 1 \:(\mathrm{mod}\, 2)} = \frac{1-e^{-2\gamma l}}{2}.
		\end{align*}
		In addition, taking $ \alpha = 1/2 $ in Corollary~3.3 of \cite{appuhamillage_occupation_2011}, we obtain, for $ x, y \geq 0 $,
		\begin{equation*}
		\P[x]{W_t \in dy, L^0_t(W) \in dl} = \left( G_t(x - y) - G_t(x+y) \right) dy \delta_0(dl) - 2 \partial_x G_t(x+y+l) dy dl.
		\end{equation*}
		As a result, if $ xy \geq 0^+ $,
		\begin{equation*}
		\P[x]{X_t \in dy} = \left( G_t(x - y) - G_t(x+y) \right) dy - 2 \int_{0}^{\infty} \frac{1 + e^{-2\gamma l}}{2} \partial_x G_t(\abs{x}+\abs{y}+l) dl dy.
		\end{equation*}
		Integrating by parts yields
		\begin{equation*}
		\frac{\P[x]{X_t \in dy}}{dy} = G_t(x - y) + G_t(x+y) - 2\gamma \int_{0}^{\infty} e^{-2\gamma l} G_t(\abs{x}+\abs{y}+l) dl.
		\end{equation*}
		Likewise if $ xy \leq 0^- $,
		\begin{align*}
		\frac{\P[x]{X_t \in dy}}{dy} &= -2 \int_{0}^{\infty} \frac{1-e^{-2\gamma l}}{2} \partial_x G_t( \abs{x} + \abs{y} + l) dl \\
		&= 2\gamma \int_{0}^{\infty} e^{-2\gamma l} G_t( \abs{x} + \abs{y} + l) dl.
		\end{align*}
		The proof of Corollary~\ref{cor:transition_density} is now complete.
	\end{proof}
	
	\section{Scaling limit of \rw* with a barrier} \label{sec:convergence}
	
	Here, we prove the convergence of the sequence of \rw* defined in Subsection~\ref{subsec:scaling_limit} to \prBm (Theorem~\ref{thm:convergence}), in the case $K=2$ and $\gamma \in (0,\infty)$ (the general case is treated similarly).
	
	\begin{figure}[h]
		\centering
		\includegraphics[width=\textwidth]{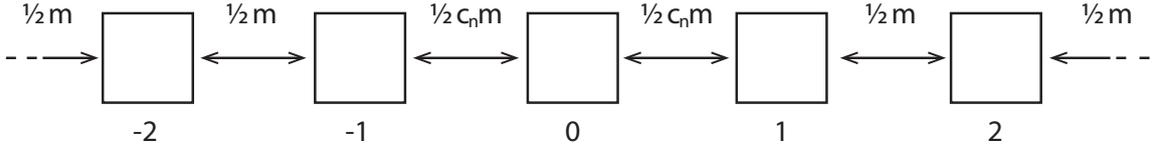}
		\caption{Jump rates of the \rw with an obstacle for $ K=2 $} \label{fig:jump_rates_k2}
	\end{figure}
	
	Recall that $\left( \xi_n(t), t\geq 0 \right)$ is a \rw on $E$ with jump rates given in \eqref{jump_rates_even}, \eqref{jump_rates_odd} (Figure~\ref{fig:jump_rates_k2}) and that $X_n(t) = \frac{1}{\sqrt{n}}\xi_n(nt)$.
	Also recall that $ d $ is a metric for Skorokhod convergence on compact time intervals \eqref{definition_metric}.
	\begin{lemma} \label{lemma:tightness}
		The sequence $\lbrace \proc{X_n(t)}{t\geq 0}, n \geq 1 \rbrace$ is tight in $ ( \sko[\R_+]{\R}, d ) $.
	\end{lemma}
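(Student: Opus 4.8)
The plan is to verify Aldous' tightness criterion for the laws of $X_n$ on each compact interval $[0,T]$; since $d$ is assembled from the Skorokhod metrics on the intervals $[0,T]$, this yields tightness in $(\sko[\R_+]{\R},d)$. The starting point is the Dynkin decomposition of the continuous-time Markov chain $\xi_n$,
\[
\xi_n(u)-\xi_n(0)=M_n(u)+\int_0^u b_n(\xi_n(s))\,ds,\qquad b_n(i)=\sum_{j}(j-i)\,q_n(i,j),
\]
where $M_n$ is a martingale with $\qvar{M_n}(u)=\int_0^u a_n(\xi_n(s))\,ds$ and $a_n(i)=\sum_j(j-i)^2q_n(i,j)$. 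For $K=2$ one has $b_n(i)=0$ for $i\notin\{-1,1\}$, $b_n(\pm1)=\pm\frac m2(1-c_n)$, and $0\le a_n\le m$ with $a_n\equiv m$ off $\{-1,0,1\}$. After rescaling this becomes $X_n(t)=X_n(0)+Y_n(t)+Z_n(t)$ with $Y_n(t)=\frac1{\sqrt n}M_n(nt)$ a martingale for the filtration $(\F^{\xi_n}_{nt})_{t\ge0}$, and $Z_n(t)=\frac1{\sqrt n}\int_0^{nt}b_n(\xi_n(s))\,ds$ a continuous finite-variation process whose variation measure charges only $\{s:\xi_n(ns)\in\{-1,1\}\}$ — so $Z_n$ is precisely the part of $X_n$ generated near the origin.

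The martingale part is handled by standard martingale tightness. Since $\qvar{Y_n}(t)=\frac1n\int_0^{nt}a_n(\xi_n(s))\,ds\le mt$, Doob's $L^2$ inequality gives $\E{\sup_{s\le T}Y_n(s)^2}\le 4mT$, and optional stopping applied to $M_n(t)^2-\qvar{M_n}(t)$ gives $\E{(Y_n(\tau+h)-Y_n(\tau))^2}\le mh$ for every stopping time $\tau\le T$ and $h>0$; together with the fact that the jumps of $Y_n$ are bounded by $\frac1{\sqrt n}\to0$, this is exactly what Aldous' criterion asks of $Y_n$.

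The lemma then reduces entirely to controlling $Z_n$. One has $|Z_n(t+h)-Z_n(t)|\le \frac m{\sqrt n}\int_{nt}^{n(t+h)}\1{\xi_n(s)\in\{-1,1\}}\,ds$ and $\sup_{s\le T}|Z_n(s)|\le\frac m{\sqrt n}\int_0^{nT}\1{\xi_n(s)\in\{-1,1\}}\,ds$, so the whole statement follows from the single estimate
\[
\sup_{i\in E}\E[i]{\int_0^{T}\1{\xi_n(s)\in\{-1,1\}}\,ds}\ \le\ C\sqrt{T\vee1}\qquad\text{for all }T\ge0,\ n\ge1,
\]
with $C$ independent of $n$. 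Indeed, the strong Markov property at time $n\tau$ then gives $\E{|Z_n(\tau+h)-Z_n(\tau)|}\le \frac m{\sqrt n}C\sqrt{(nh)\vee1}\le Cm(\sqrt h+n^{-1/2})$ and $\E{\sup_{s\le T}|Z_n(s)|}\le Cm(\sqrt T+n^{-1/2})$, so by Markov's inequality $\limsup_n\sup_{\tau\le T,\,h\le\delta}\P{|Z_n(\tau+h)-Z_n(\tau)|>\varepsilon}\to0$ as $\delta\downarrow0$ and $\{\sup_{s\le T}|Z_n(s)|\}_n$ is tight. Combining this with the bounds on $Y_n$ and with $X_n(0)\to x^0$, and using $|X_n(\tau+h)-X_n(\tau)|\le|Y_n(\tau+h)-Y_n(\tau)|+|Z_n(\tau+h)-Z_n(\tau)|$, verifies Aldous' criterion (see e.g. \cite{ethier_markov_1986}) and proves the lemma.

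The occupation-time estimate displayed above is the main obstacle. By the strong Markov property (delay each path until its first visit to $\{-1,1\}$) and the symmetry of $\xi_n$, it reduces to bounding $\E[1]{\int_0^T\1{\xi_n(s)=1}\,ds}=\int_0^T p^n_s(1,1)\,ds$, where $p^n$ is the transition kernel of $\xi_n$, and the key point is the uniform heat-kernel bound $p^n_s(1,1)\le C/\sqrt s$ (together with the trivial $p^n_s(1,1)\le1$). The chain $\xi_n$ is reversible with respect to the counting measure, and shrinking the jump rates on the two edges incident to $0$ can only lengthen the sojourns of $\xi_n$ at the site $0$ itself, never at $\pm1$; I would turn this into a proof by a time-change comparison with the rate-$\frac m2$ simple random walk on $\Z$, for which $\int_0^Tp_s(1,1)\,ds=\bigO{\sqrt T}$ by the local central limit theorem (a Nash or Carne--Varopoulos estimate would also do). A softer alternative is to establish tightness of $Z_n$ by showing that every subsequential limit coincides with a constant multiple of the local time at the origin of the limiting process, which is continuous; but the displayed bound is required in any case for the modulus-of-continuity half of Aldous' criterion.
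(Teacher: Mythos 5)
Your architecture is sound and genuinely different from the paper's. You decompose the signed walk $\xi_n$ into a martingale plus a drift supported on $\{-1,1\}$ and verify Aldous' criterion; the paper instead works with a truncation of $\abs{X_n}$, writes it as a martingale $M_n$ plus reflection terms (equation \eqref{tilde_decomposition}), and then proves the pathwise inequality \eqref{bound_increments_Xn}, which bounds the modulus of continuity of $X_n$ by $3/\sqrt{n}$ plus twice that of $M_n$; tightness of $X_n$ is then inherited directly from the convergence in distribution of $M_n$, with no stopping-time estimates and no separate treatment of a drift term. Both routes ultimately rest on the same quantitative input: a bound of order $\sqrt{T}$, uniform in $n$, on the expected occupation time of the barrier up to time $T$ (the paper needs it as Lemma~\ref{lemma:occupation_time} to identify the quadratic variation of $M_n$). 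Your reduction of tightness to that single estimate, and your handling of the martingale part, are correct.

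The occupation-time estimate is precisely where your proposal has a gap: the bound is true, but neither of your two suggested justifications establishes it. The perturbation multiplies the rates of the two \emph{edges} incident to $0$, not all rates out of a given site (from site $1$ the rates to $2$ and to $0$ are $m/2$ and $c_n m/2$ respectively), so $\xi_n$ is not a time change of the rate-$m/2$ simple random walk and the time-change comparison does not apply. The monotonicity claim --- that shrinking the central conductances cannot lengthen sojourns at $\pm 1$ --- is not a citable fact and is delicate: lowering $c_n$ lengthens each excursion of the walk inside $\{-1,0,1\}$ as a whole, and a uniform Nash inequality fails because the test function $\mathbf{1}_{\{0\}}$ has Dirichlet energy $c_n m \to 0$. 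The estimate can instead be obtained by the elementary excursion argument of Lemma~\ref{lemma:occupation_time}: starting from $\pm 1$, the expected time for $\xi_n$ to exit $\{-1,0,1\}$ equals $3/m$ (unrescaled), uniformly in $n$, because the walk reaches $0$ only with probability of order $c_n$ and then stays there for a time of order $1/c_n$, so the product remains bounded; and the number of visits to $\{-1,0,1\}$ before time $T$ is stochastically dominated by a geometric variable with parameter of order $1/\sqrt{T}$, namely the number of excursions outside the barrier before the first one of duration exceeding $T$. With that substitution your proof closes; as written, the key step is asserted rather than proved.
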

	Let $\proc{X_\infty(t)}{t\geq 0}$ be an arbitrary limit point of this sequence (\textit{i.e.} the limit of a converging subsequence).
	\begin{lemma} \label{lemma:absolute_value}
		$\abs{X_\infty}$ is distributed as reflected \Bm with diffusion coefficient $m$.
	\end{lemma}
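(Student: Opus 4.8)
The plan is to identify the law of the limiting path $ t\mapsto\abs{X_\infty(t)} $ by a Skorokhod-decomposition argument. Since $ X_n\to X_\infty $ in distribution in $ \sko[\R_+]{\R} $ along the chosen subsequence and composition with the continuous function $ \abs{\cdot} $ is a continuous map on $ \sko[\R_+]{\R} $ (for the $ J_1 $ topology), the continuous mapping theorem gives $ \abs{X_n}\to\abs{X_\infty} $ in distribution, so it suffices to compute the limit of $ (\abs{X_n(t)},t\ge0) $. First note that $ (\abs{\xi_n(t)},t\ge0) $ is itself a \cadlag Markov chain on $ \N $: by the symmetry of the rates \eqref{jump_rates_even}, when $ \abs{\xi_n}=k $ its transition rates depend on $ k $ only, coinciding with those of a simple random walk of rate $ m/2 $ for $ k\ge2 $ and being modified (slowed down) only at the two states $ k\in\{0,1\} $. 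Write $ b_n(i)\ge0 $ for the drift of $ \abs{\xi_n} $ at $ i $, which vanishes for $ \abs{i}\ge2 $ and is supported on $ \{-1,0,1\} $. Dynkin's formula combined with the space--time rescaling then yields
\[
\abs{X_n(t)}=\abs{X_n(0)}+\bar M_n(t)+\bar A_n(t),
\]
where $ \bar A_n(t)=\tfrac1{\sqrt n}\int_0^{nt}b_n(\xi_n(s))\,ds $ is non-decreasing and is constant on every interval on which $ \abs{X_n}>1/\sqrt n $, and $ \bar M_n $ is a \cadlag martingale with jumps of size $ 1/\sqrt n $ and predictable bracket $ \langle\bar M_n\rangle_t=\tfrac1n\int_0^{nt}a_n(\xi_n(s))\,ds $, where $ a_n(i)=m $ for $ \abs{i}\ge2 $ and $ 0\le a_n(i)\le m $ for every $ i $.

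I would then pass to the limit. The family $ \{\abs{X_n}\} $ is tight by Lemma~\ref{lemma:tightness} and the continuous mapping theorem, and $ \{\bar M_n\} $ is tight by the standard criterion for martingales whose brackets are C-tight (here dominated by the deterministic function $ t\mapsto mt $) and whose jumps vanish; hence $ \bar A_n=\abs{X_n}-\abs{X_n(0)}-\bar M_n $ is tight as well. Along a further subsequence, $ (\abs{X_n},\bar M_n,\bar A_n) $ converges in distribution to a triple $ (\abs{X_\infty},M_\infty,A_\infty) $ with $ \abs{X_\infty}=\abs{x^0}+M_\infty+A_\infty\ge0 $. The limit $ M_\infty $ is continuous (the jumps of $ \bar M_n $ vanish) and is a martingale (the martingale property passes to the limit, using $ \sup_n\mathbb{E}[\bar M_n(t)^2]\le mt $), while $ A_\infty $ is continuous, non-decreasing, $ A_\infty(0)=0 $, and increases only when $ \abs{X_\infty}=0 $, i.e. $ \int_0^\infty\1{\abs{X_\infty(s)}>0}\,dA_\infty(s)=0 $ --- this last point because $ \bar A_n $ grows only on $ \{\abs{X_n}\le1/\sqrt n\} $ and $ 1/\sqrt n\to0 $.

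It remains to identify $ \langle M_\infty\rangle $ and conclude. The discrepancy $ mt-\langle\bar M_n\rangle_t=\tfrac1n\int_0^{nt}\bigl(m-a_n(\xi_n(s))\bigr)\,ds $ is non-negative and bounded above by $ m\int_0^t\1{\abs{X_n(s)}\le1/\sqrt n}\,ds $. Granting that this last quantity tends to $ 0 $ in probability, one gets $ \langle\bar M_n\rangle_t\to mt $ and, the martingale property of $ \bar M_n^2-\langle\bar M_n\rangle $ passing to the limit by routine moment bounds, $ \langle M_\infty\rangle_t=mt $. By Lévy's characterisation $ M_\infty=\sqrt m\,\beta $ for some standard \Bm $ \beta $, so $ \abs{x^0}+M_\infty $ is a \Bm with diffusion coefficient $ m $ started at $ \abs{x^0}\ge0 $. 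Since $ \abs{X_\infty}\ge0 $ and $ \abs{X_\infty}=(\abs{x^0}+M_\infty)+A_\infty $ with $ A_\infty $ satisfying the three properties above, Lemma~\ref{lemma:skorokhod} --- applied pathwise with $ f=\abs{x^0}+M_\infty $ and $ l=A_\infty $ --- identifies $ A_\infty $ as the Skorokhod regulator of $ \abs{x^0}+M_\infty $ and $ \abs{X_\infty} $ as its reflection at $ 0 $; that is, $ \abs{X_\infty} $ is reflected \Bm with diffusion coefficient $ m $, which proves the lemma.

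The only non-routine ingredient is the occupation-time estimate $ \int_0^t\1{\abs{X_n(s)}\le1/\sqrt n}\,ds\to0 $ in probability, equivalently $ \tfrac1n\,\mathbb{E}\bigl[\int_0^{nt}\1{\xi_n(s)\in\{-1,0,1\}}\,ds\bigr]\to0 $; a crude $ o(n) $ bound suffices, and I expect this to be the main obstacle. I would obtain it by splitting the trajectory of $ \abs{\xi_n} $ at level $ 2 $: above level $ 2 $ the walk is an ordinary simple random walk, so in real time $ nt $ it performs $ O(\sqrt n) $ excursions below level $ 2 $; each such excursion spends $ O(1) $ expected time at state $ 1 $, and reaches state $ 0 $ only with probability $ O(c_n) $, where it is then held for an exponential time of mean $ O(1/c_n) $ and so contributes a further $ O(1) $ in expectation; altogether $ \mathbb{E}[\,\text{time spent in }\{-1,0,1\}\text{ before }nt\,]=O(\sqrt n)=o(n) $. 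Alternatively, one can bound $ \int_0^{nt}\mathbb{P}(\xi_n(s)=i)\,ds $ directly by a Gaussian heat-kernel estimate for the perturbed walk. This is precisely the step where the hypothesis $ \tfrac{\sqrt n}{K}c_n\to\gamma\in(0,\infty) $ enters: it keeps the sojourns at the origin neither long enough nor frequent enough to survive the rescaling, which is why the limit is genuinely reflecting Brownian motion and not a sticky one.
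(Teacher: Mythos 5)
Your proof is correct, but it follows a genuinely different route from the paper's. The paper never uses that $\abs{\xi_n}$ is itself a Markov chain; instead it decomposes the truncated process $\tilde{X}_n$ into a martingale $M_n$ (a stochastic integral that only picks up increments outside the barrier) plus a counting term, and then proves the combinatorial identity \eqref{Vn_local_time}--\eqref{martingale_plus_increasing} showing that the non-decreasing part is \emph{exactly} the running minimum $\sup_{s\le t}(\tfrac{2}{\sqrt n}-M_n(s))^+$ of the martingale. Passing to the limit, reflected \Bm then drops out immediately from L\'evy's theorem, and --- this is the real payoff --- the identification $L^0_t(X_\infty)=\sup_{s\le t}(-M_\infty(s))^+$ and the joint convergence of $(X_n,L_n)$ (Lemma~\ref{lemma:convergence_local_time}) come for free, which is what Lemma~\ref{lemma:local_time} later needs. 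You instead take the Dynkin/semimartingale decomposition of the Markov chain $\abs{\xi_n}$, pass the three defining properties of the Skorokhod problem to the limit, and invoke the uniqueness statement of Lemma~\ref{lemma:skorokhod}; this is more systematic and avoids the clever running-minimum observation, at the cost of having to verify in the limit that $A_\infty$ charges only $\{\abs{X_\infty}=0\}$ and that the martingale property and bracket survive the limit (all of which you handle, if tersely). Both arguments hinge on the same occupation-time estimate, which is the paper's Lemma~\ref{lemma:occupation_time}, and your excursion-counting sketch (expected sojourn per visit of order $1/n$ in rescaled time, $O(\sqrt n)$ visits) is essentially the paper's proof of that lemma. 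One small correction: your closing remark overstates the role of the hypothesis $\tfrac{\sqrt n}{K}c_n\to\gamma$ here --- since the origin is reached with probability $O(c_n)$ per excursion and held for a time of mean $O(1/c_n)$, the product is $O(1)$ for essentially any $c_n$, and indeed the theorem yields reflected \Bm as the absolute value for every $\gamma\in[0,+\infty]$; that hypothesis really bites in Lemma~\ref{lemma:local_time}, where the geometric number of visits between crossings must rescale to an exponential \rv.
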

	Let $T_0=0$ and for $i \geq 0$,
	\begin{align} \label{def_Ti}
		T_{i+1} = \inf \lbrace t > T_i : X_\infty(T_i) X_\infty(t) < 0 \rbrace.
	\end{align}
	\begin{lemma} \label{lemma:local_time}
		$( L^0_{T_{i+1}}(X_\infty) - L^0_{T_{i}}(X_\infty) )_{i \geq 0}$ is a sequence of independent exponential \rv* with parameter $\gamma$.
		This sequence is independent of $\proc{\abs{X_\infty(t)}}{t\geq 0}$.
	\end{lemma}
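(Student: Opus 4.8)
The plan is to exploit a simple feature of the microscopic dynamics (we are in the case $K=2$, so the barrier edges are $\{0,1\}$ and $\{-1,0\}$, each of rate $c_nm/2$): the modulus $(\abs{\xi_n(t)},t\ge0)$ is itself a continuous-time Markov chain on $\N$ whose jump rates — $c_nm$ from $0$ to $1$, $c_nm/2$ from $1$ to $0$ and $m/2$ from $1$ to $2$, and $m/2$ both ways from any site $\ge2$ — do not depend on the sign of $\xi_n$, while each time $\xi_n$ leaves $0$ it jumps to $+1$ or $-1$ with probability $\tfrac12$ each, independently of the past and of $(\abs{\xi_n(t)},t\ge0)$. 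Writing $0<\rho_1^n<\rho_2^n<\cdots$ for the successive hitting times of $0$ by $\xi_n$ and $\eta_k^n\in\{\pm1\}$ for the side chosen on the $k$-th departure from $0$ (with $\eta_0^n=\sign(x_n^0)$), one then has $\sign(\xi_n(s))=\eta_{V_n(s)}^n$, where $V_n(s)=\#\{k\ge1:\rho_k^n\le s\}$, and the family $(\eta_k^n)_{k\ge1}$ is i.i.d.\ uniform on $\{\pm1\}$ and independent of $(\abs{\xi_n(t)},t\ge0)$. In particular the change-of-sign times of $X_n$ are the $\rho_k^n/n$ with $\eta_k^n\ne\eta_{k-1}^n$, i.e.\ a Bernoulli$(\tfrac12)$ thinning — independent of $\abs{X_n}$ — of the point set $\{\rho_k^n/n:k\ge1\}$.

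I would next introduce a discrete local time $\Lambda_n$ for $X_n$, for instance $\Lambda_n(t)=\abs{X_n(t)}-\abs{X_n(0)}-M_n(t)$ with $M_n$ the martingale part of $\abs{X_n}$, a non-decreasing process that grows only while $X_n\in\{0,\pm1/\sqrt n\}$. As in the proof of Lemma~\ref{lemma:absolute_value}, $M_n\Rightarrow$ Brownian motion, hence $(\abs{X_n},\Lambda_n)\Rightarrow(\abs{X_\infty},L^0_\cdot(X_\infty))$ along the convergent subsequence by Tanaka's formula. The crucial estimate concerns the increments of $\Lambda_n$ between consecutive visits of $\xi_n$ to $0$: with $\hat\ell_k^n:=\Lambda_n(\rho_k^n/n)$, the variables $(\hat\ell_{k+1}^n-\hat\ell_k^n)_{k\ge0}$ are i.i.d.\ (strong Markov property of $\abs{\xi_n}$ at the hitting times of $0$, the side being irrelevant by symmetry), measurable with respect to $(\abs{\xi_n(t)},t\ge0)$, and converge in law to i.i.d.\ exponential variables. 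Indeed, one excursion of $\abs{\xi_n}$ away from $0$ visits the site $1$ a number $1+\mathrm{Geom}(p_n)$ of times — each return to $1$ being followed independently by a jump to $0$, ending the excursion, with probability $p_n=c_n/(1+c_n)$, or by an excursion of simple random walk into $\{2,3,\dots\}$ otherwise — with $p_n\sim K\gamma/\sqrt n\to0$; each sojourn at $1$ adds to $\Lambda_n$ an amount of mean $\tfrac1{\sqrt n}(1+o(1))$, so, the rescaled number of visits converging to an exponential variable, a law of large numbers gives $\hat\ell_{k+1}^n-\hat\ell_k^n\Rightarrow\mathcal E$ with $\mathcal E$ exponential, of the parameter dictated by the Ray--Knight theorem, exactly as in Subsection~\ref{subsec:local_time}.

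The law of $\big(L^0_{T_{i+1}}(X_\infty)-L^0_{T_i}(X_\infty)\big)_{i\ge0}$ then follows by a short computation. A maximal run of constant value in the i.i.d.\ fair sequence $(\eta_k^n)$ has $\mathrm{Geom}(\tfrac12)$ length, independent of the increments $(\hat\ell_{k+1}^n-\hat\ell_k^n)_k$ by the first paragraph; passing to the limit, $L^0_{T_{i+1}}(X_\infty)-L^0_{T_i}(X_\infty)$ is a $\mathrm{Geom}(\tfrac12)$-indexed sum of i.i.d.\ exponential variables, hence exponential — of parameter $\gamma$ once $\Lambda_n$ is normalised to agree with Definition~\ref{definition_prBm} — and the successive increments are i.i.d.

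The main obstacle is the independence of this sequence from $\abs{X_\infty}$: pre-limit, both the increments and $\abs{X_\infty}$ are functions of $(\abs{\xi_n(t)},t\ge0)$, so the independence can only emerge in the limit. I would obtain it by showing that, conditionally on the point process $\{\hat\ell_k^n\}_k$ on the local-time axis, the excursions of $\abs{X_n}$ away from $0$ of height exceeding any fixed $\delta>0$ converge to independent Poisson point processes on the successive inter-point intervals, with the intensity prescribed by It\^o's excursion measure — a law-of-rare-events statement, since a single simple random walk excursion reaches height $\delta\sqrt n$ only with probability of order $n^{-1/2}$. Since the excursion point process of reflected Brownian motion over a local-time interval $[0,L]$ splits into independent pieces over any deterministic partition of $[0,L]$, this forces $\{\hat\ell_k^n\}$ to converge to a Poisson point process on the local-time axis that is \emph{independent} of $\abs{X_\infty}$; combining this with the Bernoulli$(\tfrac12)$ thinning by the coins $(\eta_k^n)$ (independent of everything) and the elementary fact that a geometric sum of i.i.d.\ exponentials is exponential yields Lemma~\ref{lemma:local_time}. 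Establishing this joint convergence rigorously — in particular the tightness and the It\^o-measure limit of the point-process component — is the technical core of the argument.
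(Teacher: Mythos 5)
Your microscopic decomposition is correct and genuinely different from the paper's route. For $K=2$ the modulus $(|\xi_n(t)|)_{t\ge0}$ is indeed Markov, and the sides chosen at successive departures from the site $0$ are i.i.d.\ fair coins independent of it; your two-stage computation also gives the right law: the number of visits to the site $1$ per excursion of $\xi_n$ from the site $0$ is geometric of parameter $c_n/(1+c_n)$, so the local time accumulated per such excursion converges to an exponential of parameter $2\gamma$ (since $\sqrt{n}\,c_n \to K\gamma = 2\gamma$), and a $\mathrm{Geom}(1/2)$ sum of these is exponential of parameter $\gamma$. The paper reaches the same exponential in one step: the number of visits to $[-1/\sqrt{n},1/\sqrt{n}]$ between consecutive crossings is geometric of parameter $p_n = c_n/(2(1+c_n))$, so $L_n(T_1^n) = Y_n/\sqrt{n}$ converges directly to an exponential of parameter $\gamma$, and the identification of the limit with $L^0_{T_{i+1}}(X_\infty)-L^0_{T_i}(X_\infty)$ is supplied by Lemma~\ref{lemma:convergence_crossing_times}. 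Note that your crossing times (returns to $0$ at which the coin flips) are not the paper's $T_i^n$ (first passages to $\mp 2/\sqrt{n}$), so "passing to the limit" in your third paragraph silently requires your own analogue of that convergence statement, which you neither state nor prove.

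The genuine gap is the independence from $|X_\infty|$, which you yourself identify as the technical core and do not carry out. Your plan --- show that, conditionally on the local-time marks $\{\hat\ell_k^n\}$, the point process of macroscopic excursions of $|X_n|$ converges to It\^o's Poisson point process, and deduce independence in the limit --- is plausible but heavy: it requires joint convergence of an excursion point process together with marks on the local-time axis, tightness of that pair, and identification of the conditional intensity, none of which is supplied. The paper sidesteps all of this with a much lighter device: since the crossing data are generated entirely by the trajectory \emph{inside} the barrier, one time-changes $\tilde{X}_n$ by $\theta^n$ so as to excise the sojourns in $[-1/\sqrt{n},1/\sqrt{n}]$; the time-changed process is then \emph{exactly} independent of $(L_n(T_{i+1}^n)-L_n(T_i^n))_{i\ge0}$ before any limit is taken, and Lemma~\ref{lemma:occupation_time} shows the time change is asymptotically negligible, so the independence passes to the limit. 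As written, your argument establishes the law of the sequence of increments but not its independence from $|X_\infty|$, and that second half is precisely where the difficulty of the lemma lies.
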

	
	\begin{proof}[Proof of Theorem~\ref{thm:convergence}]
		By Proposition~\ref{prop:local_time_construction}, $X_\infty$ is characterized as (the projection on $\R$ of) \prBm.
		Since the sequence $X_n$ is tight and has only one possible limit point in $\sko[\R_+]{\R}$, it converges in distribution to \prBm.
	\end{proof}
	
	The rest of this section is devoted to the proof of Lemmas~\ref{lemma:absolute_value}, \ref{lemma:local_time} and \ref{lemma:tightness}, in that order.
	In what follows, we assume, with a slight abuse of notation, that $ \left( X_n, n\geq 1 \right) $ is a subsequence of the original sequence of processes which converges in distribution to $ X_\infty $.
	
	\subsection{The absolute value of $X_\infty$} \label{subsec:absolute_value}
	
	To prove that the absolute value of any possible limit point of $X_n$ is reflected \Bm, we write $\abs{X_n}$ as the sum of a martingale term and a non-decreasing term.
	We then show that the martingale term converges to \Bm while the non-decreasing term converges to the opposite of the running minimum of this \Bm.
	The conclusion follows from a classical result on reflected \Bm \cite[VI.2]{revuz_continuous_2013}.
	
	Set
	\begin{equation*}
	\tilde{X}_n(t) = \abs{X_n(t)} \1{\abs{X_n(t)} \geq \frac{2}{\sqrt{n}}}
	\end{equation*}
	and, for $i\geq 0$,
	\begin{align*}
	\sigma_0^n = 0, && \tau_i^n = \inf \lbrace t > \sigma_i^n : \abs{X_n(t)} \leq \tfrac{1}{\sqrt{n}} \rbrace, \\
	&& \sigma_{i+1}^n = \inf \lbrace t > \tau_i^n : \abs{X_n(t)} > \tfrac{1}{\sqrt{n}} \rbrace.
	\end{align*}
	The process $ \tilde{X}_n $ can then be decomposed as follows \cite{iksanov_functional_2016}
	\begin{equation} \label{tilde_decomposition}
	\tilde{X}_n(t) = M_n(t) + L_n(t) - \sum_{i\geq 0} \abs{X_n(\tau_i^n)} \1{ \tau_i^n \leq t < \sigma_{i+1}^n },
	\end{equation}
	with 
	\begin{equation*}
	M_n(t) = \abs{X_n(0)} + \int_0^t \1{ \abs{X_n(s)} > \frac{1}{\sqrt{n}} } d \abs{X_n}(s)
	\end{equation*}
	and
	\begin{align*}
	L_n(t) &= \sum_{i\geq 0} \left( \abs{X_n(\sigma_{i+1}^n)} - \abs{X_n(\tau_i^n)} \right) \1{ \sigma_{i+1}^n \leq t } \\
	&= \frac{1}{\sqrt{n}} \sum_{i\geq 0} \1{ \sigma_{i+1}^n \leq t }. \numberthis \label{local_time}
	\end{align*}
	The term $M_n$ is a martingale, while $L_n$ counts the number of visits (in fact of exits) of $[-\frac{1}{\sqrt{n}},\frac{1}{\sqrt{n}}]$.
	
	Define the running minimum $V_n(t)$ of the martingale part as
	\begin{equation} \label{definition_Vn}
	V_n(t) = \sup_{s \leq t} \left( \tfrac{2}{\sqrt{n}} - M_n(s) \right)^+
	\end{equation}
	and note that $V_n$ first becomes positive when $M_n$ first reaches $\frac{1}{\sqrt{n}}$, \textit{i.e.}
	\begin{equation*}
	\inf \lbrace t \geq 0 : V_n(t) \geq \tfrac{1}{\sqrt{n}} \rbrace = \inf \lbrace t \geq 0 : M_n(t) \leq \tfrac{1}{\sqrt{n}} \rbrace.
	\end{equation*}
	Since up to that time the other terms on the \rhs of \eqref{tilde_decomposition} are zero, we get
	\begin{equation*}
	\inf \lbrace t \geq 0 : V_n(t) \geq \tfrac{1}{\sqrt{n}} \rbrace = \tau_0^n.
	\end{equation*}
	The next time $V_n$ increases is
	\begin{equation*}
	\inf \lbrace t \geq 0 : V_n(t) \geq \tfrac{2}{\sqrt{n}} \rbrace = \inf \lbrace t \geq 0 : M_n(t) \leq 0 \rbrace.
	\end{equation*}
	By \eqref{tilde_decomposition}, this is also $\tau_1^n$.
	By induction,
	\begin{equation} \label{Vn_local_time}
	V_n(t) = \frac{1}{\sqrt{n}} \sum_{i\geq 0} \1{ \tau_i^n \leq t }.
	\end{equation}
	This translates the fact that the excursions of $M_n$ above its running minimum are given by the excursions of $\abs{X_n}$ above $\frac{1}{\sqrt{n}}$, see also Figure~\ref{fig:random_walk_decomposition}.
	Returning to \eqref{tilde_decomposition}, we have shown
	\begin{equation} \label{martingale_plus_increasing}
	\tilde{X}_n(t) = M_n(t) + V_n(t).
	\end{equation}
	
	\begin{figure}[h]
		{\centering
			\includegraphics[width=\textwidth]{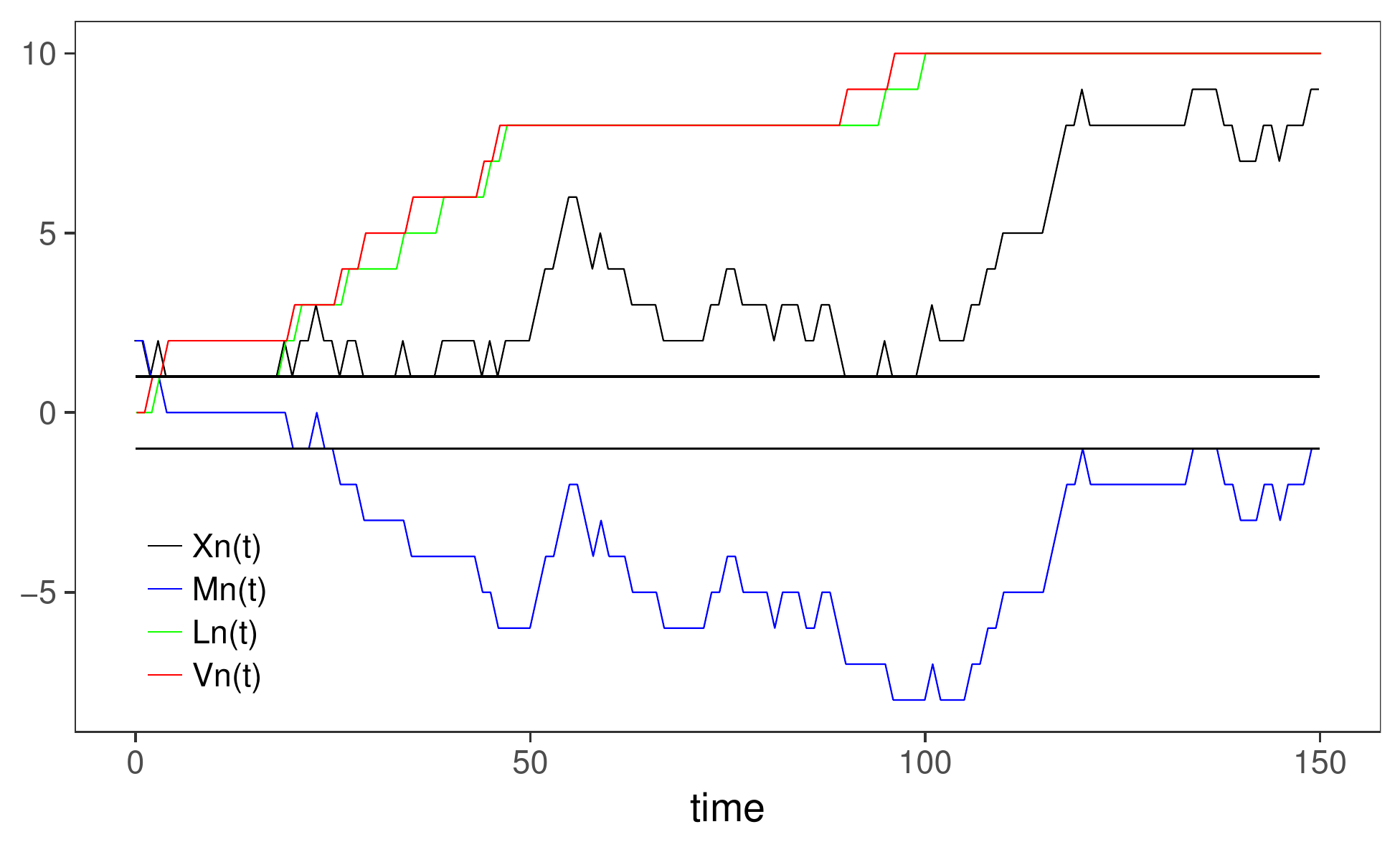}
			\caption{Decomposition of $X_n$} \label{fig:random_walk_decomposition}}
		{\small The black line shows a sample path of $X_n$ for $k=2$, $m=0.4$ and $c_n=0.1$. The blue line is $ M_n $ while the green (resp. red) lines show $ L_n $ (resp. $ V_n $). We see that the excursions of $ M_n $ above its running minimum are given by the excursions of $ X_n $ outside $ \lbrace -1, 1 \rbrace $.}
	\end{figure}
	
	\begin{lemma} \label{lemma:cvg_Mn}
		The process $M_n$ converges in distribution in $( \sko[\R_+]{\R}, d_{sko} )$ to $ M_\infty $, a \Bm with variance parameter $m$ (started from $\abs{X_\infty(0)}$).
	\end{lemma}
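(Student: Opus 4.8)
The plan is to apply the martingale central limit theorem (e.g.\ \cite[Theorem~7.1.4]{ethier_markov_1986}) to $M_n$, which requires three things: convergence of the initial values, a vanishing--jumps condition, and identification of the limit of the predictable quadratic variation $\qvar{M_n}$. The first two are immediate. Since $X_n(0)=\tfrac1{\sqrt n}x_n^0$ is deterministic and converges to $x^0$, one has $X_\infty(0)=x^0$ a.s.\ and $M_n(0)=\abs{X_n(0)}\to\abs{x^0}=\abs{X_\infty(0)}$. Moreover $M_n$ is an integral against $d\abs{X_n}$, and $\abs{X_n}$ jumps only by $\pm\tfrac1{\sqrt n}$ (the walk $\xi_n$ is nearest-neighbour and passes through $0$ when it changes sign), so $\abs{M_n(s)-M_n(s-)}\le\tfrac1{\sqrt n}$ for every $s$ and $\E{\sup_{s\le t}\abs{M_n(s)-M_n(s-)}^2}\le\tfrac1n\to 0$. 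It then suffices to show $\qvar{M_n}_t\to mt$ in probability, uniformly on compact time intervals.

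To compute $\qvar{M_n}$, note that by the choice of integrand $M_n$ stays constant at every jump of $\xi_n(n\cdot)$ for which $\xi_n(n\cdot)\in\{-1,0,1\}$ (equivalently $\abs{X_n}\le\tfrac1{\sqrt n}$), whereas whenever $\xi_n(n\cdot)\notin\{-1,0,1\}$ it jumps by $+\tfrac1{\sqrt n}$ or $-\tfrac1{\sqrt n}$, each at rate $\tfrac{nm}{2}$ in the time scale of $X_n$. Hence
\[
\qvar{M_n}_t \;=\; m\int_0^t \1{\abs{X_n(s)}\ge \tfrac2{\sqrt n}}\,ds \;=\; mt - m\,A_n(t),
\qquad A_n(t):=\int_0^t \1{\abs{X_n(s)}\le \tfrac1{\sqrt n}}\,ds ,
\]
where I used that $\abs{X_n}$ takes values in $\tfrac1{\sqrt n}\N$, so $\abs{X_n}<\tfrac2{\sqrt n}\Leftrightarrow\abs{X_n}\le\tfrac1{\sqrt n}$. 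As $A_n$ is non-decreasing with continuous candidate limit $0$, it is enough to prove $A_n(t)\to 0$ in probability for each fixed $t$.

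After the change of variables $u=ns$, $A_n(t)=\tfrac1n\int_0^{nt}\1{\abs{\xi_n(u)}\le 1}\,du$, so $\E{A_n(t)}=\tfrac1n\int_0^{nt}\P{\abs{\xi_n(u)}\le 1}\,du$. The chain $\xi_n$ is nearest-neighbour on $\Z$ with all jump rates bounded by $m$, started far from $0$, and the key estimate is a uniform on-diagonal bound $\P{\xi_n(u)=k}\le C\,(u\vee1)^{-1/2}$, which gives $\E{A_n(t)}\le\tfrac{3C}{n}\int_0^{nt}(u\vee1)^{-1/2}\,du=\bigO{\sqrt{t/n}}\to 0$; Markov's inequality then yields $A_n(t)\to0$ in probability, and monotonicity upgrades this to uniform convergence on compacts. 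Feeding $\qvar{M_n}_t\to mt$ into the martingale CLT gives convergence in distribution of $M_n$ in $(\sko[\R_+]{\R},d_{sko})$ to a Brownian motion $M_\infty$ of variance parameter $m$ started from $\abs{X_\infty(0)}$. The step I expect to cost the most work is precisely obtaining that transition-probability bound uniformly in $n$ near the degenerate site $0$, where the exit rate $c_nm\to0$ slows the chain down and could a priori inflate $\P{\xi_n(u)=0}$; a self-contained alternative is to bound $\int_0^{nt}\1{\abs{\xi_n(u)}\le1}\,du$ by the number of excursions of $\xi_n$ into $\{-1,0,1\}$ up to time $nt$ (which equals $\sqrt n$ times the rescaled boundary local time $L_n$ of \eqref{local_time}, hence is of order $\sqrt{nt}$ since $L_n$ is tight) times the mean duration of one such excursion, which a short computation shows equals $3/m$ independently of $c_n$ — the long sojourns of length $\sim(c_nm)^{-1}$ incurred when the walk reaches $0$ being exactly compensated by the $O(c_n)$ chance of stepping there from $\pm1$ — and this again gives $\E{A_n(t)}=\bigO{\sqrt{t/n}}\to0$.
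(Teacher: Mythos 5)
Your overall strategy is the same as the paper's: $M_n$ is a square-integrable martingale with jumps bounded by $1/\sqrt{n}$ and predictable quadratic variation $m\left(t - \nu^n(t)\right)$ where $\nu^n(t) = \int_0^t \1{\abs{X_n(s)} \leq 1/\sqrt{n}} ds$, so everything reduces to $\E{\nu^n(t)} \to 0$, which the paper isolates as Lemma~\ref{lemma:occupation_time} and which you also identify as the crux. Your ``alternative'' argument for this step is in fact the paper's argument: the mean duration of one visit to $\lbrace -1,0,1 \rbrace$ started from $\pm 1$ is exactly $3/m$ independently of $c_n$ (the paper's $h^n(\pm 1/\sqrt{n}) = 3/(mn)$ after rescaling), so one only needs the number of such visits before time $t$ to be $\bigO{\sqrt{n}}$ in expectation.

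Two caveats. First, your lead route --- a uniform on-diagonal bound $\P{\xi_n(u)=k} \leq C (u\vee 1)^{-1/2}$ --- is not something you can take off the shelf here: the conductances of the two edges adjacent to $0$ are $c_n m/2 \to 0$, so Nash-type arguments giving uniform heat-kernel upper bounds degenerate, and you rightly flag this; treat that route as abandoned rather than deferred. Second, and this is the genuine gap, your fallback justifies the $\bigO{\sqrt{nt}}$ count of excursions into the barrier by ``$L_n$ is tight''. At this stage nothing about $L_n$ (or $X_n$) is known: in the paper, tightness of $X_n$ (Lemma~\ref{lemma:tightness}) and the identification of $L_n$ with a local time both \emph{use} the convergence of $M_n$, so invoking tightness of $L_n$ here is circular. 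The fix is elementary and is what the paper does: the number of excursions of $X_n$ outside $[-\tfrac{1}{\sqrt{n}},\tfrac{1}{\sqrt{n}}]$ completed before time $t$ is stochastically dominated by a geometric \rv with success parameter $\P[\tfrac{2}{\sqrt{n}}]{\tau_0^n > t}$ (the probability that a single outside excursion, which is just a simple random walk excursion, lasts longer than $t$), and this parameter is $\sim c/\sqrt{n}$ by a standard gambler's-ruin estimate. With that substitution your argument closes and coincides with the paper's proof.
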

	
	We prove this lemma below, but let us first conclude the proof of Lemma~\ref{lemma:absolute_value}.
		
	\begin{proof}[Proof of Lemma~\ref{lemma:absolute_value}]
		Recall that we are already considering a subsequence along which $X_n$ converges to $X_\infty$.
		Passing to the limit in \eqref{martingale_plus_increasing}, we obtain
		\begin{equation*}
		\abs{X_\infty(t)} = M_\infty(t) + \sup_{s\leq t} \left( -M_\infty(s) \right)^+.
		\end{equation*}
		Setting
		\begin{align*}
			L(t) = \sup_{s \leq t} (-M_\infty(s))^+,
		\end{align*}
		we have by Lévy's theorem \cite[Theorem~VI.2.3]{revuz_continuous_2013} that $ (\abs{X_\infty}, L) $ is distributed as $ (\abs{B}, L^B) $, where $ B $ is \Bm (with variance parameter $m$) and $ L^B $ is its local time at zero.
		Hence $ (\abs{X_\infty(t)}, t \geq 0) $ is distributed as reflected \Bm and $ (L(t), t \geq 0) $ is its local time at zero, \textit{i.e.}
		\begin{equation} \label{local_time_Mn}
		L^0_t(X_\infty) = \sup_{s \leq t} (-M_\infty(s))^+.
		\end{equation}
	\end{proof}
		
	To show that $M_n$ converges to \Bm, we note that $M_n$ is a square integrable martingale with predictable variation
	\begin{equation*}
	\langle M_n \rangle_t = m (t - \nu^n(t) )
	\end{equation*}
	where
	\begin{equation} \label{variance_Mn}
	\nu^n(t) = \int_0^t \1{ \abs{X_n(s)} \leq \frac{1}{\sqrt{n}} } ds.
	\end{equation}
	We prove the following lemma in Subsection~\ref{subsec:occupation_time}.
	
	\begin{lemma} \label{lemma:occupation_time}
		For any $t \geq 0$, $\E{\nu^n(t)} = \bigO{\frac{1}{\sqrt{n}}}$.
	\end{lemma}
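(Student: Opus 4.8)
The plan is to bound $\E{\nu^n(t)}$ by controlling, uniformly in $n$, the expected amount of time the rescaled random walk $X_n$ spends in the set $\{|x| \leq 1/\sqrt n\}$, which in the original time/space scale is the expected time $\xi_n$ spends in $\{-1,1\}$ (for $K=2$) over the time interval $[0,nt]$, divided by $n$. Since $\nu^n(t) = \frac{1}{n}\int_0^{nt} \1{\xi_n(s) \in \{-1,1\}}\, ds$, it suffices to show $\E{\int_0^{nt}\1{\xi_n(s)\in\{-1,1\}}\,ds} = \bigO{\sqrt n}$. First I would decompose this occupation time into the contributions of successive visits to the ``bad'' set $\{-1,1\}$: each time $\xi_n$ enters $\{-1,1\}$ it stays there for an amount of time that is stochastically dominated by a geometric-type sum of exponentials with rates of order $m$ (the holding rate at $\pm1$ is bounded below by $m/2$ and above by $m/2 + c_n m/2 \leq m$ for large $n$), so each sojourn in $\{-1,1\}$ has expected length of order $1$, uniformly in $n$. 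Thus I need to bound the expected number of \emph{distinct visits} to $\{-1,1\}$ before time $nt$ by $\bigO{\sqrt n}$.

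The key step is therefore the counting of excursions. Between two successive visits to $\{-1,1\}$, the walk performs an excursion either into $\{2,3,\ldots\}$ or into $\{-2,-3,\ldots\}$ (or, while inside $\{-1,1\}$, possibly crosses from $1$ to $-1$ or back at rate $c_n m/2$, but such a crossing does not leave the bad set and contributes nothing new to the occupation time beyond what is already counted). For an excursion into the positive half-line, the walk behaves like a simple random walk on $\{2,3,\ldots\}$ started at $2$; by the standard gambler's-ruin estimate for the discrete Laplacian, the probability that such an excursion returns to $1$ rather than escaping to infinity \emph{before time $nt$} is close to $1$, but the expected \emph{duration} of such an excursion, conditioned on returning, together with the fact that over a time window of length $nt$ a simple random walk makes $\bigO{\sqrt{nt}}$ returns to a fixed site, gives the bound. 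Concretely, I would use that for continuous-time simple random walk with jump rate $m$, the expected number of visits to the origin (or to any fixed site) up to time $S$ is $\bigO{\sqrt S}$ — this follows from $\sum_{k} p_k(\mathrm{site}) \asymp \sum_k (mk)^{-1/2}$ where $p_k$ is the $k$-step return probability, or more cleanly from the local central limit theorem / the fact that the Green's function $\int_0^S p_s(0,0)\,ds \asymp \sqrt S$. Applying this with $S = nt$ yields an expected number of visits to $\{-1,1\}$ of order $\sqrt{nt}$, and multiplying by the $\bigO{1}$ expected sojourn length gives $\E{\int_0^{nt}\1{\xi_n(s)\in\{-1,1\}}\,ds} = \bigO{\sqrt n}$, hence $\E{\nu^n(t)} = \bigO{1/\sqrt n}$.

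The main obstacle is making the excursion-counting argument rigorous in the presence of the perturbed rates near the origin: the reduced rates $c_n m/2$ on the barrier edges slightly bias the walk, so one cannot directly invoke an unperturbed random walk comparison. I would handle this by a coupling/domination argument — since $c_n \to 0$, for $n$ large the perturbed walk is stochastically ``less likely to stay near the origin'' than a reference unperturbed simple random walk reflected suitably, or alternatively one can absorb the perturbation into a bounded multiplicative constant in the Green's function bound because the rates stay within $[m/2, m]$ everywhere. A clean way to avoid the perturbation issue entirely is to observe that each visit to $\{-1,1\}$ is followed either by a return to $\{-1,1\}$ via a genuine excursion into $|\xi_n|\geq 2$ (governed by unperturbed rates) or by an internal $1\leftrightarrow -1$ transition; grouping consecutive internal transitions into a single ``super-sojourn'' of expected length $\bigO{1}$ reduces the count to excursions governed purely by the unbiased simple-random-walk dynamics on $\{2,3,\dots\}$ and $\{-2,-3,\dots\}$, to which the Green's-function estimate applies directly. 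The remaining computations — the geometric sum for the sojourn length and the $\sqrt S$ bound on the Green's function — are routine.
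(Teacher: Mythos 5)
Your overall architecture --- expected length of a sojourn in the barrier times expected number of such sojourns, with the number of sojourns bounded by $\bigO{\sqrt{n}}$ via a return-probability/Green's function estimate for simple random walk --- is the same as the paper's. The second half of your argument is sound: the paper bounds the number of visits before time $t$ by a geometric variable with parameter $\P[\tfrac{2}{\sqrt{n}}]{\tau_0^n > t} \asymp 1/\sqrt{n}$, which is equivalent in substance to your bound $\int_0^{nt} p_s(0,0)\, ds \asymp \sqrt{nt}$ on the expected number of returns.

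The gap is in the first half, and it sits exactly where the lemma stops being routine for $K \geq 2$. For $K=2$ the set $\lbrace \abs{X_n} \leq 1/\sqrt{n} \rbrace$ corresponds to $\lbrace -1, 0, 1 \rbrace$ in unrescaled coordinates, not $\lbrace -1, 1\rbrace$, and the total jump rate at the interior site $0$ is $c_n m \to 0$, not of order $m$: your claim that the rates stay within $[m/2, m]$ everywhere is false there. A single visit to $0$ lasts an exponential time of mean $1/(c_n m) \sim \sqrt{n}/(2\gamma m)$, which by itself contributes a quantity of order $1/\sqrt{n}$ to $\nu^n(t)$ --- the full size of the bound you are trying to prove. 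So the $\bigO{1}$ expected sojourn length cannot be obtained by stochastic domination with uniformly bounded rates; one must show that an entry into the barrier at $\pm 1$ reaches $0$ only with probability of order $c_n$, so that the long holding time at $0$ is compensated and the expected number of visits to $0$ before time $nt$ is $\bigO{1}$. This balancing is precisely what the paper's computation of $h^n$ via the harmonic equations delivers: $h^n(\pm 1/\sqrt{n}) = 3/(mn)$ (a quantity which already accounts for the time spent at $0$ during the sojourn), whereas $h^n(0) = 3/(mn) + 1/(c_n m n)$ is needed only for the at most one sojourn that could start at $0$. Your ``super-sojourn'' device asserts the $\bigO{1}$ expected length without carrying out this computation, and as written your argument only covers the case $K=1$, where the barrier has no interior site --- which is exactly the simplification the paper deliberately avoids by treating $K=2$.
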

	
	The proof of Lemma~\ref{lemma:cvg_Mn} is then straightforward.
	
	\begin{proof}[Proof of Lemma~\ref{lemma:cvg_Mn}]
		From \eqref{variance_Mn} and Lemma~\ref{lemma:occupation_time}, $\qvar{M_n} \to mt$ in probability as $n \to \infty$.
		Moreover,
		\begin{equation*}
		\sup_{t\geq 0} \abs{M_n(t) - M_n(t^-)} \leq \frac{1}{\sqrt{n}}
		\end{equation*}
		almost surely.
		Hence, for example from \cite[Proposition~II.1]{rebolledo_central_1980}, $M_n$ converges to \Bm in distribution in $\sko{\R}$ for all $ T>0 $.
	\end{proof}
	
	In passing, we have proved the following lemma.
	\begin{lemma} \label{lemma:convergence_local_time}
		$ \left( X_n, L_n \right) \cvgas[d]{n} \left( X_\infty, L^0(X_\infty) \right) $
	\end{lemma}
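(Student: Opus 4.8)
The plan is to reduce the statement to the joint convergence of $(X_n, V_n)$, with $V_n$ the running minimum of \eqref{definition_Vn}, and then to identify the limit of $V_n$ with the local time at the origin of $X_\infty$. The reduction rests on the fact that $L_n$ and $V_n$ are uniformly close: the successive hitting times satisfy $0 = \sigma_0^n \le \tau_0^n \le \sigma_1^n \le \tau_1^n \le \cdots$, so for every $t$ the integers $\#\{i \ge 0 : \tau_i^n \le t\}$ and $\#\{i \ge 0 : \sigma_{i+1}^n \le t\}$ differ by at most one, whence $\sup_{t \ge 0} |L_n(t) - V_n(t)| \le 1/\sqrt n$ by \eqref{local_time} and \eqref{Vn_local_time}. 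As $d_{sko}(L_n, V_n) \le 1/\sqrt n$ surely, it suffices to prove $(X_n, V_n) \cvgas[d]{n} (X_\infty, L^0(X_\infty))$.

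For this I would use tightness. The pair $(X_n, M_n)$ is tight, $X_n$ being tight by Lemma~\ref{lemma:tightness} and $M_n$ convergent by Lemma~\ref{lemma:cvg_Mn}; passing to a further subsequence, assume $(X_n, M_n) \to (X_\infty, M_\infty)$ jointly, where $M_\infty$ is a \Bm with variance $m$. The one-sided reflection map $f \mapsto \big(t \mapsto \sup_{s \le t}(c - f(s))^+\big)$ is Lipschitz for the supremum norm, uniformly in the level $c$, and here $c = 2/\sqrt n \to 0$; since the limit $M_\infty$ is continuous, Skorokhod convergence of $M_n$ upgrades to uniform convergence on compacts, so $V_n$ converges, jointly with the above, to $t \mapsto \sup_{s \le t}(-M_\infty(s))^+$. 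On the other hand $\tilde X_n = M_n + V_n$ by \eqref{martingale_plus_increasing}, $\sup_t \big| |X_n(t)| - \tilde X_n(t) \big| \le 2/\sqrt n$, and $|X_n| \to |X_\infty|$ by continuous mapping, so passing to the limit gives $|X_\infty(t)| = M_\infty(t) + \sup_{s \le t}(-M_\infty(s))^+$ almost surely.

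It remains to recognise $t \mapsto \sup_{s \le t}(-M_\infty(s))^+$ as $L^0_t(X_\infty)$, which is exactly \eqref{local_time_Mn} in the proof of Lemma~\ref{lemma:absolute_value}: $|X_\infty|$ is reflected \Bm realised as the Skorokhod reflection of the \Bm $M_\infty$, and Lévy's theorem (\cite[Theorem~VI.2.3]{revuz_continuous_2013}) forces the regulator of a \Bm path to coincide with the local time at $0$ of the reflected process. Hence every joint subsequential limit of $(X_n, V_n)$ equals $(X_\infty, L^0(X_\infty))$; as the limit does not depend on the subsequence, $(X_n, V_n) \cvgas[d]{n} (X_\infty, L^0(X_\infty))$ along the sequence, and combining with the first paragraph yields $(X_n, L_n) \cvgas[d]{n} (X_\infty, L^0(X_\infty))$.

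The step needing the most care is the joint convergence in the second paragraph: the separate convergences $X_n \to X_\infty$ and $M_n \to M_\infty$ do not by themselves provide it, so one genuinely relies on the tightness argument together with the fact that the subsequential limit is pinned down — in effect that $M_\infty$, hence $V_\infty$, is a deterministic functional of $X_\infty$. This is not a formal consequence of the identity $|X_\infty| = M_\infty + \sup_{s \le \cdot}(-M_\infty(s))^+$ alone, since a reflected-\Bm path admits many decompositions of the form $W = f + \sup_{s\le\cdot}(-f(s))^+$ with $f$ continuous (for instance $f = |X_\infty| - c\,L^0(X_\infty)$ for any $c \in [0,1]$); it uses crucially that $M_\infty$ is a genuine martingale, obtained as the limit of the $L^2$-bounded, uniformly integrable martingales $M_n$, which singles it out through the uniqueness of the semimartingale decomposition of reflected \Bm. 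Everything else is the elementary interlacing bound and standard continuity of the reflection map.
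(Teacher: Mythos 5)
Your proof is correct and follows essentially the same route as the paper's: reduce $L_n$ to the running minimum $V_n$ via the interlacing bound $\abs{L_n(t) - V_n(t)} \leq 1/\sqrt{n}$, pass to the limit in $\tilde{X}_n = M_n + V_n$, and identify $\sup_{s\leq t}(-M_\infty(s))^+$ with $L^0_t(X_\infty)$ through \eqref{local_time_Mn}. The paper simply asserts the joint convergence of $(X_n, V_n)$ that you take care to justify via tightness and the pinning-down of the subsequential limit, so your closing paragraph is a (correct) elaboration of a step the paper glosses over rather than a departure from its argument.
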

	
	\begin{proof}
		From \eqref{definition_Vn} and the convergence of $ M_n $, it is clear that the pair of processes $ (X_n, V_n) $ converges in distribution in $ \sko{\R^2} $ to $ (X_\infty, L_\infty) $, where
		\begin{equation*}
		L_\infty(t) = \sup_{s \leq t} (-M_\infty(s))^+.
		\end{equation*}
		Furthermore, from \eqref{Vn_local_time} and \eqref{local_time}, we have for all $ t\geq 0 $
		\begin{equation*}
		\abs{L_n(t) - V_n(t)} \leq \frac{1}{\sqrt{n}}.
		\end{equation*}
		As a result the pair $ (X_n, L_n) $ converges in distribution in $ \sko{\R^2} $ to $ (X_\infty, L_\infty) $.
		We conclude the proof by noting that $ L_\infty(t) = L^0(X_\infty) $, as shown in \eqref{local_time_Mn}.
	\end{proof}
	
	\subsection{Local time accumulated between crossings} \label{subsec:local_time_rw}
	
	To prove that the local time accumulated by $X_\infty$ at the origin between crossings is a sequence of exponential variables, we show that the number of visits of the random walk $X_n$ to $[-\frac{1}{\sqrt{n}},\frac{1}{\sqrt{n}}]$ before the first time it reaches $-\frac{2}{\sqrt{n}}$ is a geometric \rv.
	
	Let $ (T_i^n, i\geq 0) $ be the sequence of crossing times of $ [-1/\sqrt{n}, 1/\sqrt{n}] $ by $ X_n $, \textit{i.e.} for $n\geq 0$, set $T_0^n = 0$ and
	\begin{equation*}
	T_{i+1}^n = \inf \lbrace t>T_i^n : \sign(X_n(T_i^n)) X_n(t) < - \tfrac{1}{\sqrt{n}} \rbrace.
	\end{equation*}
	Recall also the definition of $ (T_i, i \geq 0) $ in \eqref{def_Ti}.
	
	\begin{lemma} \label{lemma:convergence_crossing_times}
		As $ n $ tends to infinity,
		\begin{equation*}
		\left( X_n, L_n, \proc{T_i^n}{i\geq 0} \right) \cvgas[d]{n} \left( X_\infty, L^0(X_\infty), \proc{T_i}{i\geq 0} \right)
		\end{equation*}
		in $ \sko[{[0,T]^2}]{\R^2} \times \R^\N $.
	\end{lemma}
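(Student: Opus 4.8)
The plan is to upgrade the joint convergence $(X_n, L_n) \cvgas[d]{n} (X_\infty, L^0(X_\infty))$ obtained in Lemma~\ref{lemma:convergence_local_time} to include the crossing times $(T_i^n)_{i \geq 0}$. The natural strategy is to realise everything on a single probability space via the Skorokhod representation theorem, so that we may assume $(X_n, L_n) \to (X_\infty, L^0(X_\infty))$ almost surely in $\sko[{[0,T]^2}]{\R^2}$, and then argue that $T_i^n \to T_i$ almost surely for each $i$. Since the $T_i^n$ take values in $\R^\N$ (a Polish space under the product topology), convergence of each coordinate gives convergence of the whole vector, and combined with the a.s.\ convergence of $(X_n, L_n)$ this yields the claimed joint convergence in distribution.

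First I would observe that $T_{i+1}$ is, by \eqref{def_Ti}, a hitting time of an open set by the continuous process $X_\infty$; specifically, conditionally on $X_\infty(T_i) > 0$ (say), $T_{i+1}$ is the first time $X_\infty$ enters $(-\infty, 0)$. The key point I would establish by induction on $i$ is that $T_i$ is almost surely a time at which $X_\infty$ \emph{crosses} zero transversally in the sense that it immediately takes strictly negative values, i.e.\ $T_{i+1} = \inf\{t > T_i : X_\infty(t) < 0\} = \inf\{t > T_i : X_\infty(t) \leq -\delta\}$ has no ``sticky'' behaviour: for every $\varepsilon > 0$ there is $t \in (T_{i+1}, T_{i+1}+\varepsilon)$ with $X_\infty(t) < -\varepsilon'$ for some $\varepsilon' > 0$. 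This follows because $|X_\infty|$ is reflected Brownian motion (Lemma~\ref{lemma:absolute_value}) and, by the construction of $X_\infty$ as a limit of the $X_n$ together with the martingale decomposition, the sign of $X_\infty$ after $T_i$ is determined and $X_\infty$ behaves locally like a Brownian motion started at $0$ on the appropriate side, which a.s.\ has no isolated zeros and is not eventually one-signed. Granting this regularity, the standard continuity property of hitting times of open sets applies: if $x_n \to x$ uniformly on $[0,T]$ and $x$ reaches the open set $(-\infty,0)$ ``cleanly'' just after time $s$, then the first entrance times of $x_n$ into $(-\infty,0)$ after (approximately) $s$ converge to that of $x$. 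Running this inductively, using that $T_i^n \to T_i$ and $X_n(T_i^n) \to X_\infty(T_i)$ (so the sign is eventually correct), gives $T_{i+1}^n \to T_{i+1}$ a.s.

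I expect the main obstacle to be precisely the verification that the limit process $X_\infty$ does not exhibit pathological behaviour at the crossing times --- that is, ruling out that $X_\infty$ touches $0$ and lingers, or touches $0$ without genuinely crossing, in a way that would make $\liminf T_{i+1}^n$ strictly smaller than $T_{i+1}$ (a premature crossing of $X_n$ that the limit does not see) or $\limsup T_{i+1}^n$ strictly larger (a crossing of the limit that the $X_n$ round off). The lower-semicontinuity direction ($\liminf T_{i+1}^n \geq T_{i+1}$) is easy: before $T_{i+1}$ the path $X_\infty$ stays one-signed, so uniform convergence forces $X_n$ to stay (essentially) one-signed too, at least until it is within $O(1/\sqrt n)$ of zero, and the definition of $T_{i+1}^n$ with threshold $-1/\sqrt n$ handles the $O(1/\sqrt n)$ discrepancy. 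The reverse inequality is where the non-stickiness of $X_\infty$ at $0$ is needed, and this is supplied by the identification of $|X_\infty|$ with reflected Brownian motion together with the a.s.\ properties of Brownian local time and excursion theory --- the set of zeros of reflected Brownian motion has empty interior, and excursions straddling any deterministic level of the local time clock are genuine, so each $T_i$ is a bona fide crossing. Once these path properties are in hand, the remainder is the routine Skorokhod-representation argument outlined above.
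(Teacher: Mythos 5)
Your overall architecture (Skorokhod representation to get almost sure convergence of $(X_n, L_n)$, then show the crossing times converge by ruling out pathological behaviour of $X_\infty$ at zero) is the same as the paper's, but you have located the difficulty in the wrong direction, and the argument you offer for the direction that is actually hard does not work. Ruling out $\limsup_n T^n_{i+1} > T_{i+1}$ is the easy half and needs no path regularity at all: by the definition of $T_{i+1}$ as an infimum there is a $t<\limsup_n T^n_{i+1}$ with $X_\infty(t)<0$, while $X_n(t)\geq -1/\sqrt{n}$ for $t<T^n_{i+1}$, a contradiction with $X_n(t)\to X_\infty(t)$. The hard half is ruling out premature crossings, $\liminf_n T^n_{i+1} < T_{i+1}$, and your claim that ``the definition of $T^n_{i+1}$ with threshold $-1/\sqrt{n}$ handles the $O(1/\sqrt{n})$ discrepancy'' is not correct: the Skorokhod coupling gives a uniform error between $X_n$ and $X_\infty$ that is $o(1)$ but in no way $O(1/\sqrt{n})$, so at a time where $X_\infty$ merely touches $0$ without crossing, $X_n$ may well dip below $-2/\sqrt{n}$ and register a spurious crossing. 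This is precisely the scenario the paper must, and does, exclude.

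The paper's resolution uses an ingredient absent from your plan. If a limit point $\tilde{T}_1$ of $T^n_1$ satisfied $\tilde{T}_1 < T_1$, then $\sup\{|X_\infty(s)| : \tilde{T}_1 \leq s \leq \tilde{T}_2 \wedge (\tilde{T}_1+\varepsilon)\}$ would have to vanish, forcing either $\tilde{T}_1 = \tilde{T}_2$ or $X_\infty \equiv 0$ on an interval. The second event has probability zero because $|X_\infty|$ is reflected Brownian motion (this part you do have), but the first is excluded only because $L_n(T^n_2)-L_n(T^n_1)$ converges to a strictly positive exponential random variable --- a fact coming from the geometric number of visits to the barrier, and also what gives tightness of the $T^n_i$ via $T^n_i = L_n^{-1}(\sum_{k\leq i} E^n_k)$. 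Your substitute for this step, namely that ``the sign of $X_\infty$ after $T_i$ is determined and $X_\infty$ behaves locally like a Brownian motion started at $0$ on the appropriate side,'' is not available here: at this stage one only knows that $|X_\infty|$ is reflected Brownian motion, and the behaviour of the signed process at its zeros is exactly what Lemmas~\ref{lemma:convergence_crossing_times} and~\ref{lemma:local_time} are being used to establish, so invoking it is circular. To repair your proof you need to bring in the convergence of the local-time increments $L_n(T^n_{i+1})-L_n(T^n_i)$ to positive exponential variables as the paper does.
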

	
	The proof of Lemma~\ref{lemma:convergence_crossing_times} is given in Subsection~\ref{subsec:convergence_crossing_times}.
		
	\begin{proof}[Proof of Lemma~\ref{lemma:local_time}]
		Let $Y_n$ be the number of visits to $[-\frac{1}{\sqrt{n}},\frac{1}{\sqrt{n}}]$ up to the first crossing time,
		\begin{equation*}
		Y_n = \sum_{i \geq 1} \1{ \sigma_i^n \leq T_1^n }.
		\end{equation*}
		By the Markov property, $Y_n$ is a geometric \rv with parameter
		\begin{equation*}
		p_n = \P[\tfrac{1}{\sqrt{n}}]{ X_n(\sigma_1^n) = - \tfrac{2}{\sqrt{n}} }.
		\end{equation*}
		For $K=2$, $p_n = \frac{c_n}{2(1+c_n)}$ (and in the general case, $p_n \sim \frac{c_n}{K}$ as $n\to \infty$).
		Since $\frac{\sqrt{n}}{2} c_n \to \gamma \in (0,\infty)$,
		\begin{equation*}
		L_n(T_1^n) = \frac{1}{\sqrt{n}} Y_n
		\end{equation*}
		converges in distribution to an exponential \rv with parameter $\gamma$.
		Set $ E^n_i = L_n(T^n_{i+1}) - L_n(T^n_i) $.
		The \rv* $ E_0^n, E_1^n, \ldots $ are \iid by the strong Markov property and by symmetry.
		As a result, $ \proc{E^n_i}{i\geq 0} $ converges in distribution as $ n $ tends to infinity to a sequence $ \proc{E_i}{i\geq 0} $ of \iid exponential \rv* with parameter $ \gamma $.
		By Lemma~\ref{lemma:convergence_crossing_times}, this limit coincides with $ ( L^0_{T_{i+1}}(X_\infty) - L^0_{T_i}(X_\infty) )_{i\geq 0} $ (also note that $ t \mapsto L^0_t(X_\infty) $ is continuous almost surely).
		
		We would like to show that the sequence $ \proc{E_i^n}{i\geq 0} $ is independent of $\tilde{X}_n$, but this fails when $K \geq 2$.
		To circumvent this issue, we tweak $\tilde{X}_n$ so that it ``forgets'' the amount of time $X_n$ spends in $[-\frac{1}{\sqrt{n}},\frac{1}{\sqrt{n}}]$.
		We do this via a time change.
		Set
		\begin{equation*}
		\delta^n_i = \inf \lbrace t > \tau_i^n : X_n(t) \neq X_n(t^-) \rbrace
		\end{equation*}
		and
		\begin{equation*}
		\theta^n(t) = \inf \left\lbrace \theta > 0 : \int_0^\theta \sum_{i \geq 0} \1{ s \notin [\delta_i^n, \sigma_{i+1}^n [ } ds > t \right\rbrace.
		\end{equation*}
		Then $( \tilde{X}_n(\theta^n(t)), t\geq 0 )$ and $( L_n(T_{i+1}^n) - L_n(T_i^n) )_{i\geq 0}$ are independent.
		Furthermore, for $t\geq 0$,
		\begin{equation*}
		\abs{ \int_0^t \sum_{i \geq 0} \1{ s \notin [\delta_i^n, \sigma_{i+1}^n [ } ds - t } \leq \nu^n(t).
		\end{equation*}
		Moreover $ t \mapsto \nu^n(t) $ is nondecreasing, hence, by Lemma~\ref{lemma:occupation_time}, $ (\nu^n(t), t \geq 0) $ converges to 0 uniformly on compact sets in $ L^1 $.
		It follows that $\theta^n(t) \to t$ as $n \to \infty$ uniformly on compact sets in probability.
		As a result, $\tilde{X}_n \circ \theta^n$ converges in the Skorokhod topology to $\abs{X_\infty}$ (also in probability).
		We can thus conclude that $(L^0_{T_{i+1}}(X_\infty) - L^0_{T_i}(X_\infty))_{i\geq 0}$ is independent of $\abs{X_\infty}$.
	\end{proof}
	
	\subsection{Tightness} \label{subsec:tightness}
	
	\begin{proof}[Proof of Lemma~\ref{lemma:tightness}]
		Tightness of the sequence $X_n$ follows from the convergence in distribution of $M_n$ (recall the decomposition \eqref{tilde_decomposition}).
		Reasoning as in \cite{iksanov_functional_2016} (Proof of Lemma~2.1), we show below that for any $\delta > 0$,
		\begin{equation} \label{bound_increments_Xn}
		\sup_{\abs{s-t}<\delta} \abs{X_n(t)-X_n(s)} \leq \tfrac{3}{\sqrt{n}} + 2 \sup_{\abs{s-t}<\delta} \abs{M_n(t)-M_n(s)}.
		\end{equation}
		We can thus write, for $T > 0$ and $\varepsilon > 0$
		\begin{multline*}
		\lim_{\delta \downarrow 0} \limsup_{n \to \infty} \mathbb{P} \Bigg( \sup_{\substack{\abs{t-s}<\delta \\ s, t \in [0,T]}} \abs{X_n(s)-X_n(t)} > \varepsilon \Bigg) \\ \leq \lim_{\delta \downarrow 0} \limsup_{n \to \infty} \mathbb{P} \Bigg( \tfrac{3}{\sqrt{n}} + 2 \sup_{\substack{\abs{t-s}<\delta \\ s, t \in [0,T]}} \abs{M_n(t)-M_n(s)} > \varepsilon \Bigg).
		\end{multline*}
		The \rhs is zero because the sequence $M_n$ converges in distribution in the space $\sko{\R}$, and tightness of $X_n$ in $\sko{\R}$ follows \citep[Theorem~7.3]{billingsley_convergence_1999}.
		Since $ X_n $ is tight in $ \sko{\R} $ for all $ T>0 $, it is tight in $ (\sko[\R_+]{\R}, d) $.
		
		Let us now prove \eqref{bound_increments_Xn}.
		Fix $0 \leq s \leq t$.
		If $\abs{X_n(u)}>\frac{1}{\sqrt{n}}$ for all $u \in [s,t]$, then
		\begin{equation*}
		X_n(t)-X_n(s) = M_n(t)-M_n(s).
		\end{equation*}
		Otherwise, let
		\begin{align*}
		\alpha &= \inf \lbrace u > s : \abs{X_n(u)} \leq \tfrac{1}{\sqrt{n}} \rbrace, \\
		\beta &= \sup \lbrace u < t : \abs{X_n(u)} \leq \tfrac{1}{\sqrt{n}} \rbrace,
		\end{align*}
		and note that
		\begin{align*}
		\abs{X_n(t)-X_n(s)} &\leq \abs{X_n(t)-X_n(\beta)} + \abs{X_n(\beta)-X_n(\alpha)} + \abs{X_n(\alpha)-X_n(s)} \\
		&\leq \tfrac{3}{\sqrt{n}} + \abs{M_n(t)-M_n(\beta)} + \abs{M_n(s)-M_n(\alpha)}.
		\end{align*}
		Inequality \eqref{bound_increments_Xn} thus holds and the proof of Lemma~\ref{lemma:tightness} is complete.
	\end{proof}
	
	\subsection{Occupation time of the barrier} \label{subsec:occupation_time}
	
	\begin{proof}[Proof of Lemma~\ref{lemma:occupation_time}]
		The bound on the expected time spent inside $[-\frac{1}{\sqrt{n}},\tfrac{1}{\sqrt{n}}]$ follows after showing that the expected length of a visit in this set is of order $\frac{1}{n}$ while the expected number of those visits is of order $\sqrt{n}$.
		By the definition of $\nu^n(t)$,
		\begin{align*}
		\nu^n(t) &= \sum_{i\geq 0} \left( \sigma_{i+1}^n \wedge t - \tau_i^n \wedge t \right) \\
		&\leq \sum_{i\geq 0} \left( \sigma_{i+1}^n - \tau_i^n \right) \1{\tau_i^n \leq t}.
		\end{align*}
		By the strong Markov property,
		\begin{equation*}
		\E{\nu^n(t)} \leq \E{ \sum_{i\geq 0} h^n( X_n(\tau_i^n) ) \1{ \tau_i^n \leq t} }
		\end{equation*}
		where $h^n(x) = \E[x]{ \inf \lbrace t>0 : \abs{X_n(t)} > \frac{1}{\sqrt{n}} \rbrace }$.
		By the Markov property, for $i \in \lbrace -1, 0, 1 \rbrace$,
		\begin{equation*}
		n \sum_{j \in E} q_n(i,j) \left( h^n(j/\sqrt{n}) - h^n(i/\sqrt{n}) \right) = -1.
		\end{equation*}
		Also $h^n(x) = 0$ when $\abs{x} > \frac{1}{\sqrt{n}}$.
		Solving these equations for $K = 2$ yields
		\begin{align*}
		h^n \left( \pm \tfrac{1}{\sqrt{n}} \right) = \frac{3}{m n}, && h^n(0) = \frac{3}{m n} + \frac{1}{c_n m n}.
		\end{align*}
		(In the general case, $h^n \left( \floor{\frac{K+1}{2}}/\sqrt{n} \right) = \frac{K+1}{m n}$.)
		For $i \geq 1$, $X_n(\tau_i^n) = \pm \frac{1}{\sqrt{n}}$, hence
		\begin{equation*}
		\E{\nu^n(t)} \leq \frac{1}{c_n m n} + \frac{3}{m n} \E{ \sum_{i\geq 0} \1{ \tau_i^n \leq t } }.
		\end{equation*}
		But the number of visits of $X_n$ to $[-\frac{1}{\sqrt{n}},\tfrac{1}{\sqrt{n}}]$ before time $t$ is less than the number of excursions outside $[-\frac{1}{\sqrt{n}},\tfrac{1}{\sqrt{n}}]$ before the first excursion of length longer than $t$.
		By the Markov property, the latter is a geometric \rv with parameter
		\begin{equation*}
		\P[\tfrac{2}{\sqrt{n}}]{ \tau_0^n > t }.
		\end{equation*}
		But, for $t>0$, there exists $c \in (0, \infty)$ such that \citep[Proposition~4.2.4]{lawler_random_2010}
		\begin{equation*}
		\lim_{n \to \infty} \sqrt{n} \P[\tfrac{2}{\sqrt{n}}]{ \tau_0^n > t } = c.
		\end{equation*}
		Hence, since $\sqrt{n} c_n \to K \gamma \in (0,\infty)$,
		\begin{equation*}
		\E{ \nu^n(t)} \leq \frac{1}{c_n n m} + \frac{3}{m \sqrt{n}} \left( \sqrt{n} \P[\tfrac{2}{\sqrt{n}}]{ \tau_0^n > t } \right)^{-1} = \bigO{\frac{1}{\sqrt{n}}}.
		\end{equation*}
		This concludes the proof of Lemma~\ref{lemma:occupation_time}.
	\end{proof}
	
	\subsection{Convergence of the crossing times} \label{subsec:convergence_crossing_times}
	
	\begin{proof}[Proof of Lemma~\ref{lemma:convergence_crossing_times}]
		From Lemma~\ref{lemma:convergence_local_time}, we already know that
		\begin{equation*}
		( X_n, L_n ) \cvgas[d]{n} ( X_\infty, L^0(X_\infty) ).
		\end{equation*}
		Furthermore, for all $ i \geq 0 $,
		\begin{equation*}
		T_i^n = L_n^{-1}\left( \sum_{k=1}^{i} E_k^n \right),
		\end{equation*}
		where $ t \mapsto L_n^{-1}(t) $ is the right continuous inverse of $ L_n $.
		Since $ \proc{E_i^n}{i\geq 0} $ converges in distribution to $ \proc{E_i}{i\geq 0} $ and $ L_n $ converges in distribution to $ L^0(X_\infty) $, the sequence $ \proc{T_i^n}{n\geq 1} $ is tight in $ \R $ for all $ i \geq 0 $.
		
		As a result the sequence of random variables $ ( X_n, L_n, \proc{T_i^n}{i\geq 0} ) $ is tight in $ \mathrm{D} ([0,T]^2$, $\R^2) \times \R^\N $, where this space is endowed with the product topology.
		Let $ ( X_\infty$, $L^0(X_\infty)$, $(\tilde{T}_i )_{i\geq 0} ) $ be a possible limit point of this subsequence.
		By the Skorokhod embedding theorem, we can assume that there exists (a version of) a subsequence which converges to (a version of) this limit point almost surely.
		For ease of notation we denote this subsequence by $ ( X_n, L_n, \proc{T_i^n}{i\geq 0} ) $.
		
		Let $ \mathcal{N} \subset \Omega $ be the negligible set on which this convergence fails, and suppose that there exists $ \omega \in \Omega \setminus \mathcal{N} $ such that $ \tilde{T}_1(\omega) < T_1(\omega) $.
		We show that for this to happen, one of two very improbable things must occur: either $ \tilde{T}_1(\omega) = \tilde{T}_2(\omega) $ (but remember that $ L_n(T^n_2) - L_n(T^n_1) $ is asymptotically exponentially distributed) or $ X_\infty $ must remain equal to zero for a positive amount of time after $ \tilde{T}_1 $.
		
		Assume \Wlog that $ X_\infty(0) > 0 $ and that $ X_n(0) > 0 $ for all $ n \geq 1 $.
		Then take $ \varepsilon>0 $ such that $ \tilde{T}_1 + \varepsilon < T_1 $ ($ \omega $ is kept fixed in the remainder of the proof).
		Since $ X_n \Rightarrow X_\infty $,
		\begin{equation*}
		\inf \lbrace X_n(s), T_1^n \leq s \leq T_1^n + \varepsilon \rbrace \cvgas{n} \inf \lbrace X_\infty(s), \tilde{T}_1 \leq s \leq \tilde{T}_1 + \varepsilon \rbrace.
		\end{equation*}
		Since $ X_\infty(s) \geq 0 $ for $ s < T_1 $, the \rhs is non-negative while the \lhs is non-positive because $ X_n(T_1^n) = -\frac{2}{\sqrt{n}} $.
		As a result
		\begin{equation*}
		\lim_{n \to \infty} \inf \lbrace X_n(s), T_1^n \leq s \leq T_1^n + \varepsilon \rbrace = 0.
		\end{equation*}
		Also note that
		\begin{equation*}
		\sup \lbrace \abs{X_n(s)}, T_1^n \leq s \leq T_2^n \wedge (T_1^n + \varepsilon) \rbrace \leq \abs{\inf \lbrace X_n(s), T_1^n \leq s \leq T_1^n + \varepsilon \rbrace}.
		\end{equation*}
		Moreover the \lhs converges to
		\begin{equation*}
		\sup \lbrace \abs{X_\infty(s)}, \tilde{T}_1 \leq s \leq \tilde{T}_2 \wedge (\tilde{T}_1 + \varepsilon) \rbrace.
		\end{equation*}
		The latter must then be zero. 
		Hence either $ \tilde{T}_1 = \tilde{T}_2 $ or there exists $ \eta > 0 $ such that $ \abs{X_\infty(s)} = 0 $ for all $ \tilde{T}_1 \leq s \leq \tilde{T}_1 + \eta $.
		Since $ L_n(T_1^n) - L_n(T_2^n) $ converges to an exponential \rv with parameter $ \gamma \in (0,\infty) $ and $ \abs{X_\infty} $ is distributed as reflected \Bm, both these events have probability zero.
		
		Suppose now that $ \tilde{T}_1(\omega) > T_1(\omega) $ for some $ \omega \in \Omega \setminus \mathcal{N} $.
		By the definition of $ T_1 $, there exists $ t \in (T_1, \tilde{T}_1) $ such that $ X_\infty(t) < 0 $.
		Since $ T_1 \to \tilde{T}_1 > t $, there exists $ n_0 $ large enough that $ T_1^n > t $ for all $ n \geq n_0 $.
		Then for all $ n \geq n_0 $, $ X_n(t) \geq -\frac{1}{\sqrt{n}} $, but at the same time $ X_n(t) \to X_\infty(t) < 0 $, leading to a contradiction.
		
		We have thus shown that $ \tilde{T}_1 = T_1 $ almost surely.
		By induction one shows that $ \tilde{T}_i = T_i $ almost surely for all $ i \geq 0 $.
		It follows that $ ( X_\infty, L_\infty, \proc{T_i}{i\geq 0}) $ is the only possible limit point of the sequence $ ( X_n, L_n, \proc{T_i^n}{i\geq 0}) $.
		Together with the tightness of this sequence, this concludes the proof of Lemma~\ref{lemma:convergence_crossing_times}.
	\end{proof}
	
	\bibliography{prBm.bib}
\end{document}